\theoremstyle{definition}
\newtheorem{definition}{Definition}
\theoremstyle{remark}
\newtheorem{Rem}[definition]{Remark}
\newtheoremstyle{mytheorem}{0.5cm}{0.2cm}{\slshape}{ }{\bfseries}{.}{ }{}
\theoremstyle{mytheorem}
\newtheorem{Prop}[definition]{Proposition}
\newtheorem{Thm}[definition]{Theorem}
\newtheorem{Lem}[definition]{Lemma}
\newtheorem{Def}[definition]{Definition}
\newtheorem{Cor}[definition]{Corollary}
\renewcommand{\P}{\mathbf{P}}
\DeclareMathOperator{\E}{{\bf E}}
\renewcommand{\epsilon}{\varepsilon}
\renewcommand{\phi}{\varphi}
\newlength{\querylen}
\begin{document}

\title{Migration-Contagion Processes}
\date{}
\author{F. Baccelli$^1$, S. Foss$^2$, and S. Shneer$^2$\\
1: INRIA and Telecom Paris, France. 2: Heriot--Watt University, Edinburgh, UK.}
\maketitle
\begin{abstract}
Consider the following migration process based on a closed network of $N$ queues with $K_N$ customers.
Each station is a $\cdot$/M/$\infty$ queue with service (or migration) rate $\mu$.
Upon departure, a customer is routed independently and uniformly at random to another station.
In addition to migration, these customers 
are subject to an SIS  (Susceptible, Infected, Susceptible) dynamics. That is,
customers are in one of two states: $I$ for infected, or $S$ for susceptible.
Customers can swap their state either from $I$ to $S$ or from $S$ to $I$ only in stations.
More precisely, at any station, each susceptible customer becomes infected
with the instantaneous rate $\alpha Y$ if there
are $Y$ infected customers in the station, whereas each infected customer recovers and
becomes susceptible with rate $\beta$. We let $N$ tend to infinity and assume that 
$\lim_{N\to \infty} K_N/N= \eta $, where $\eta$ is a positive
constant representing the customer density.
The main problem of interest is about the set of parameters of such a system for which
there exists a stationary regime where the epidemic survives in the limiting system.
The latter limit will be referred to as the thermodynamic limit. 
We establish several structural properties (monotonicity and convexity) of the system,
which allow us to give the structure of the phase transition diagram
of this thermodynamic limit w.r.t. $\eta$.
The analysis of this SIS model reduces to that of a wave-type PDE for which we found 
no explicit solution. This plain SIS model is one among several companion 
stochastic processes that exhibit both migration and contagion.
Two of them are discussed in the present paper as
they provide variants to the plain SIS model as well as some bounds and approximations.
These two variants are the DOCS (Departure On Change of State)
and the AIR (Averaged Infection Rate), which both admit closed-form solutions.
The AIR system is a classical mean-field model where the 
infection mechanism based on the actual population of infected customers
is replaced by a mechanism based on some empirical average of the number of infected customers
in all stations. The latter admits a product-form solution. DOCS features
accelerated migration in that each change of SIS state implies an immediate  departure. 
This model leads to another wave-type PDE that admits a closed-form solution.
In this text, the main focus is on the closed systems and their limits.
The open systems consisting of a single station with Poisson input are
instrumental in the analysis of the thermodynamic limits and are also of independent interest.
\end{abstract}

\tableofcontents

\section{Introduction}

The model described in the abstract aims at quantifying the role
of migration in the propagation of epidemics on the simplest possible
migration model, namely a closed network of $\cdot/M/\infty$ queues, and for the
simplest epidemic process, namely the SIS model.
The thermodynamic limit discussed in the abstract is considered in order to further simplify the problem.

The epidemic interpretation of the SIS thermodynamic model is as follows:
individuals move from place (station) to place where places are indexed by, say $\mathbb Z$.
The overall density of individuals is $\eta$ (i.e., the mean number of individuals
per station is $\eta$). An individual stays at a place for an independent
random time which is exponentially distributed with parameter $\mu$.
The last parameter can be seen as the {\em migration rate}.
The departure rate of individuals from a given place is hence $\lambda=\mu \eta$. 
When it leaves a place, an individual migrates to a place chosen
independently and 'at random'. At each station, individuals are subject to
the SIS dynamics with parameters $(\alpha,\beta)$. Infections and recoveries
take place in stations and conditionally on the state of the stations.

In order to answer the question of the abstract, we 
first consider the problem of a {\em single open} station of the M/M/$\infty$ type,
referred to as the $M/M/\infty$ {\em SIS reactor}.
The input features two types of customers, infected and susceptible. The customer states change
according to mechanisms as those described in the abstract.
The question is about the steady state of this queue.
Of particular interest to us is the proportion $p_o$ of infected customers
in the stationary output point process of this reactor in the steady state.
This system has its steady state with a joint generating function characterized
as a solution of a second order wave-type PDE for which we found no closed-form solution.
We nevertheless derive several structural properties of this open system,
which are of independent interest.

The connection of the SIS reactor with the problem of the abstract is the following:
Fix all parameters $\lambda,\mu,\alpha,$ and $\beta$ of the thermodynamic limit as defined
in the abstract.
If the latter has a non-degenerate steady state (namely a steady state with survival),
then there must exist a $0<p<1$ such that the open system with infected proportion $p$
in input has a proportion of infected in output, $p_o$, equal to $p$ in the steady state.
Conversely, if the SIS reactor has a stationary regime where
$p_o(p)=p$, for some $p$, then the thermodynamic limit has a stationary regime where
the epidemic persists for appropriate initial conditions.
In addition, it will be shown that if the SIS reactor admits no $p$ such that $p_o(p)=p$,
then for all initial conditions, when time tends to infinity,
the proportion of infected customers tends to 0 in the thermodynamic limit. 
It is why we study the conditions on the $\lambda,\mu,\alpha,$ and $\beta$ parameters
for such a $p$ to exist, a situation that we will call {\em survival},
as well as conditions under which no such $p$ exists, situation that we call {\em weak extinction}.

More precisely, we prove that the SIS thermodynamic limit admits the following phase 
transition diagram: fix all parameters other than $\eta$; there exists
a non-degenerate function $\eta_c(\alpha,\beta,\mu)$ such that, for
$\eta\le\eta_c$, there is strong extinction, whereas for $\eta>\eta_c$, there is survival.
We also derive bounds on the steady states of both the
single open station model and the thermodynamic model. For instance, 
we give bounds on $\eta_c$ and $p^*$ for the latter.
Some of these bounds are established under an hypothesis of negative correlation
which is substantiated by simulation but not proved at this stage.

All the structural results on the SIS reactor are proved.
All the results on the thermodynamic 
limit are proved under the assumption that the limits defining them
exist - which we conjecture.
This will be referred to as the {\em thermodynamic propagation of chaos ansatz}.

We also study the following variants of the SIS reactor:
\begin{itemize}
\item The DOCS reactor, where the infection rate is the same as above,
but where (a) an infection immediately leads to a departure, and (b)
a recovery immediately leaves to a departure.
This problem is simpler in that the associated PDE can be solved. 
\item The AIR reactor, where (a) the infection rate
of any susceptible is constant 
(rather than proportional to the actual number of infected), and (b)
the recovery mechanism is as in the SIS case. This last model is a product-form
Jackson network and admits a simple product-form solution.
\end{itemize}
In contrast to the plain SIS case, the stationary regimes of these two other open 
systems admit closed-form expressions. 
These two open loop models in turn lead to 
thermodynamic limits. More precisely, the DOCS reactor leads to the DOCS thermodynamic limit
where customers leave the station as soon as they change their state and are immediately
routed to another station chosen at random while keeping their new state,
and the AIR reactor leads to the AIR thermodynamic limit, which is a closed
network (similar to the plain SIS thermodynamic limit) where, in any station, susceptible customers
get infected with a rate that is proportional to the mean number of infected customers in all stations.
The closed-form solutions alluded to above are used to quantify 
the phase diagram of the last two thermodynamic limits.

The AIR system is conjectured to provide bounds to the plain SIS system.
These bounds are in the following sense.
Consider two thermodynamic limits $A$ and $A'$ with the same parameters $\alpha,\beta,\mu$ and with varying $\eta$.
System $A$ will be said to have {\em more infection in mean} than system $A'$
if the mean number of infected customers is more in $A$ than in $A'$ in steady state.
Assume that system $A$ (resp $A'$) admits a critical value $\eta_c$ (resp. $\eta_c'$) such that 
if $\eta>\eta_c$, (resp. $\eta>\eta_c'$),
then there is survival, whereas there is extinction otherwise.
System $A'$ will be said to be {\em safer} than system $A$ if $\eta'_c\ge \eta_c$.
If $A$ has more infection in mean than $A'$, then $A'$ is safer than $A$.
The converse is not true in general.
It will be proved that, under a certain negative correlation conjecture which
is backed by simulation, DOCS is safer than AIR.
There is numerical evidence that plain SIS can be safer than DOCS and conversely
depending on the parameters.

The paper is structured as follows:
A summary of the models (single station models and thermodynamic limit models) 
is given in Section \ref{sec:sum}, so as to make navigation between them easier.
The open SIS reactor is studied in Section \ref{sec2}.
Its stationary generating function satisfies a wave-type PDE for which
we could not find closed-form solutions so far.
We then establish rate conservation equations
which are useful throughout the paper. 
Section \ref{sec:SIDR} discusses the DOCS open reactor
and Section \ref{sec:airR} the AIR open reactor.
Both models can be solved in closed-form. The wave-type PDE of the DOCS reactor
has a closed-form solution, whereas the AIR reactor reduces to a product
form queuing network for which an explicit solution is already known.
The closed-form expressions provide bounds on the SIS dynamics.
Section \ref{sec:thermo} gathers results on the thermodynamic mean field limit
of the SIS dynamics. The analysis is based on structural
properties of the open SIS reactor. The main result is a characterization of the
structure of the phase diagram.
The same is done in Sections \ref{secAIRth} and \ref{secDOCSth} for the AIR and the
DOCS thermodynamic limits, respectively.
Section \ref{s:ttl} gives a probabilistic interpretation for the phase transition thresholds 
in terms of branching conditions.
Finally, Section \ref{ss:num} gathers additional numerical observations based on discrete event
simulation.

The appendix contains proofs and additional material. In particular,
Subsection \ref{sec4} in the appendix discusses inequalities that would follow from the property of
anti-association (negative correlation) of certain random variables in this SIS dynamics. 

We conclude this introduction by a brief overview of the relevant literature.
In the absence of mobility, the problem was extensively studied
in the particle system literature \cite{Liggett99}, where the model
is referred to as the {\em contact process}. This literature
contains a large corpus of results on the phase diagrams of
infinite graphs with finite degrees such as grids and regular trees.
The problem was also studied on finite deterministic graphs, where
the main question is that of the
separation between a logarithmic and an exponential growth of
the time till extinction with respect to the size of the graph.
There is also a series of studies of SIS epidemic models on population
partitioned into households and, in particular, on their correlation structure and
time to extinction, see e.g., \cite{Donnelli} and \cite{Britton} and references therein.

The SIS dynamics was also extensively studied on finite random graphs
For overviews on this class of questions, see, e.g. \cite{Pastor15}
and \cite{Newman15}. Moment closure techniques \cite{Kuehn16,Jrishnarajah} provide
an important tool in this context.  The contact process was also studied
on infinite random graphs with unbounded degrees.
The supercritical Bienaym\'{e}-Galton-Watson tree was studied in
\cite{Pemantle92} where it was shown that some critical values can
be degenerate. It was also extensively studied on
Euclidean point processes  \cite{Ganesan15, Hao18, Menard15}.

The analysis of the case with mobility is more recent.
The situation where agents perform a random walk on a
finite graph and agents meeting at a given point of the graph may infect each other
was studied in \cite{Figueiredo20}.
The situation where agents form a Poisson point processes and
migrate in the Euclidean plane was
studied in \cite{Baccelli22}, were
a computational framework based onmoment closure techniques
was proposed for evaluating the role of mobility on the propagation of epidemics.

The queuing model studied here may be seen as a discrete version of the model in
of \cite{Baccelli22}, or as a thermodynamic limit of that of \cite{Figueiredo20}.

\section{The Models}
\label{sec:sum}
\subsection{The Open Models}
All models in this subsection feature an open queuing system
with two types of customers. There are two independent Poisson
external arrival point processes: that of susceptible customers, with
intensity $\lambda q$, and that of infected customers, with intensity $\lambda p$, with $q=1-p$.

\subsubsection{The Plain SIS Reactor}
The SIS reactor features a single M/M/$\infty$ type station (that will be referred to as a reactor).
Service times of all customers in this queue are exponential with parameter $\mu$.
While waiting in the reactor, each susceptible customer becomes infected
with the instantaneous rate $\alpha n$ when there are $n$ infected customers, and
each infected customer recovers and becomes susceptible with rate $\beta$. It is because of this
interaction that we call the queue a reactor
(see the left part of Figure \ref{fig:fig1}).

\begin{figure}[!h]
\centering
%\scalebox{.45}{\input{msis-reactor.pstex_t}} \hspace{1cm}
%\scalebox{.45}{\input{mair-reactor.pstex_t}} 
\includegraphics[width=0.99\linewidth]{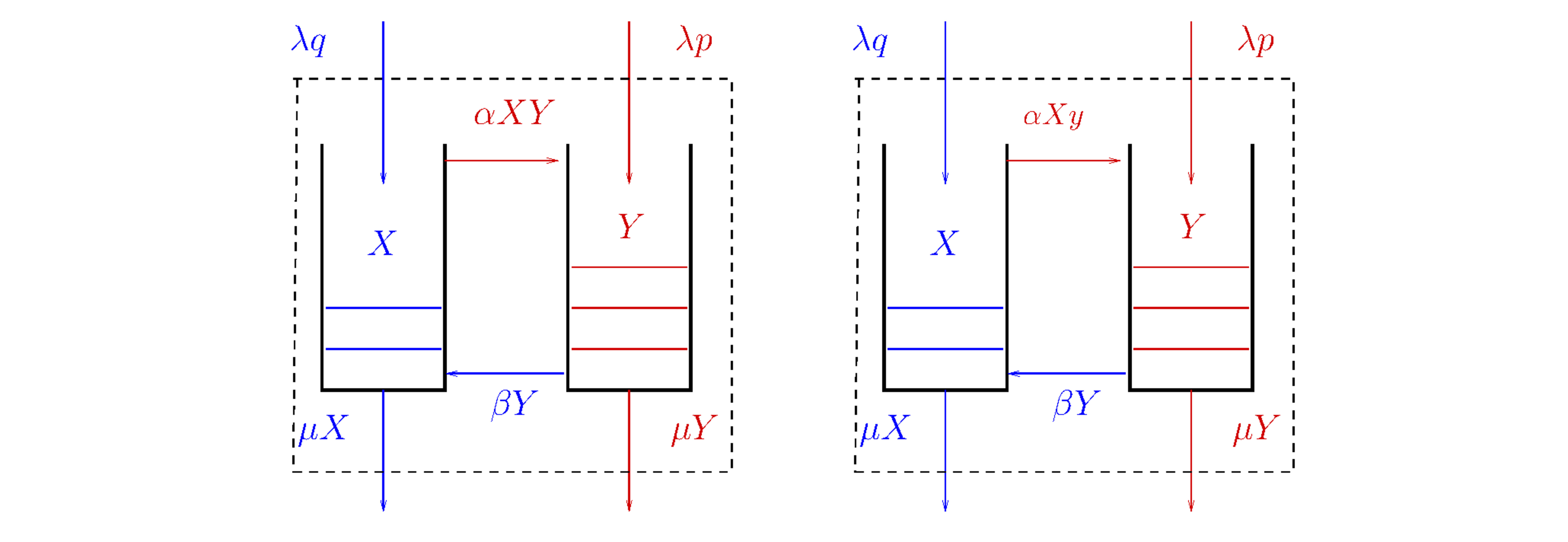}
\caption{On the left, the SIS reactor. On the right, the AIR reactor with parameter $y$.}
\label{fig:fig1}
\end{figure}

Let $X$ (resp. $Y$) denote the number of susceptible (resp. infected) 
customers in the steady state of the SIS reactor.
By classical arguments, one gets that for all $0\le x\le 1$, $0\le y\le 1$, the joint generating
function $\Phi(x,y)=\E[x^Xy^Y]$ satisfies the PDE
\begin{equation} 
\label{eq:sispde}
\left( \lambda q(1-x) + \lambda p(1-y)\right) \Phi(x,y)
= \mu (1-x)\Phi_x(x,y)+
\left(\mu (1-y)+ \beta(x-y)\right)\Phi_y(x,y) +
\alpha y (y-x)\Phi_{xy}(x,y) . \end{equation}

The term in $\Phi_{xy}(x,y)$, which comes from the infection rate in $XY$ allows
one to link this equation to the one dimensional wave equation.

\begin{Rem}
The 1 dimensional wave equation reads
$U_{tt} = c^2 U_{zz},$
with $c$ velocity, $t$ time, and $z$ space.
The last PDE leads (after a change of variables) to a relation between second derivatives of the form:
$\Psi_{xx}= \left(\frac y x\right)^2 \Psi_{yy}$ plus additional lower order terms.
That is, a velocity of $\frac y x$ when interpreting $x$ as time and $y$ as space.
In this sense, the wave equation satisfied
by the joint generating function involves a velocity that is not determined by the parameters
of the dynamics but only by the variables of the joint generating function. 
\end{Rem}

\subsubsection{The AIR Reactor}
The AIR model features a network of two M/M/$\infty$ stations.
All infected arrivals are routed 
to the second station (that of infected). The service rate in this station is $\nu=\mu+\beta$.
When a customer leaves this station, it leaves the network with probability
$\frac{\mu}{\mu +\beta}$ and is routed to the first station otherwise. 
All susceptible customers are routed to the first station (that of susceptible).
At time $t$, the service rate in this station is $\mu + y(t) \alpha$, where $y(t)$ is a positive
deterministic function that can be fixed at will.
When a customer leaves this station, it leaves the network with probability
$\frac{\mu}{\mu + y(t) \alpha}$ and is routed to the second station
with probability $\frac{y(t) \alpha}{\mu + y(t) \alpha}$.
This is depicted on the right part of Figure \ref{fig:fig1}.
The main difference with the SIS reactor is that the infection rate of a susceptible is
a state-independent deterministic function here. In particular, when
$y(t)$ is constant, we will not need the PDE here but will rather use the theory
of {\em product-form queuing networks} (see Section \ref{sec:airR}).

\subsubsection{The Averaging Mean Field of the SIS Reactor}
The {\em averaging mean-field limit} of the plain SIS model
is defined as the following limit of open networks. Consider a system with $K$ stations.
Each station is an M/M/$\infty$ queue with service rate $\mu$ and arrival rate $\lambda$.
Each arrival is independently declared infected with probability $p$ and susceptible otherwise.
In each station, an infected customer turns susceptible with rate $\beta$.
In each station a susceptible customer turns infected with rate
$$\frac \alpha  K \sum_{k=1}^K Y_k(t),$$
where $Y_k(t)$ is the number of infected nodes in station $k$ at time $t$.
So in this model, which is depicted on the left part of Figure \ref{fig:figthair},
conditionally on the state,
the infection rate of a susceptible in a station is proportional
to the {\em empirical mean} of the number of infected customers in all stations (rather
than to the number of infected customers in the same station in the SIS reactor).
The averaging mean-field limit of SIS is obtained when letting $K\to \infty$.
The empirical mean in question then converges
to a constant, which is also the mean number of infected customers in the typical station.
When it exists, this limit features a typical station which is a AIR model with the constraint that
$y(.)$ must be such that $y(t)=\mathbf{E}_{y(.)}[Y(t)]$ for all $t$, where $\mathbf{P}_{y(.)}$
is the distribution at time $t$ of the system with parameter $y(.)$, and $Y(t)$
is the number of infected customers at time $t$ in the AIR station.
The construction of such a system is discussed in Subsection \ref{ss:nhmc} in the appendix.
A single AIR station where $y(.)=\mathbf{E}_{y(.)}[Y(t)]$ will be referred to
as an {\em AIR-AMF} (AIR Averaging Mean-Field) reactor.

\subsubsection{The DOCS Reactor}
The DOCS model features a single station like in the SIS case.
The infection mechanism of the SIS model is replaced by a {\em simultaneous infection and departure}
mechanism with the following characteristics:
if the number of infected is $Y(t)$, each susceptible gets infected with rate $\alpha Y(t)$ and, 
upon infection, it immediately leaves the system for good.
There is also a ``natural'' departure rate of susceptible customers denoted by $\mu$. 
The total departure rate of infected is hence $\nu=\mu+\beta$
since when an infected customer recovers, it immediately
leaves the system. Equivalently, the total departure rate is $\mu+\beta$ and
upon departure, the infected customer keeps its state with probability $\mu/(\mu+\beta)$
or swaps to susceptible with the complementary probability. See the left part of Figure \ref{fig:fig11}

\begin{figure}[!h]
\centering
%\scalebox{.45}{\input{mdocs-reactor.pstex_t}} \hspace{1cm}
%\scalebox{.45}{\input{mrouting-docs.pstex_t}} 
\includegraphics[width=0.99\linewidth]{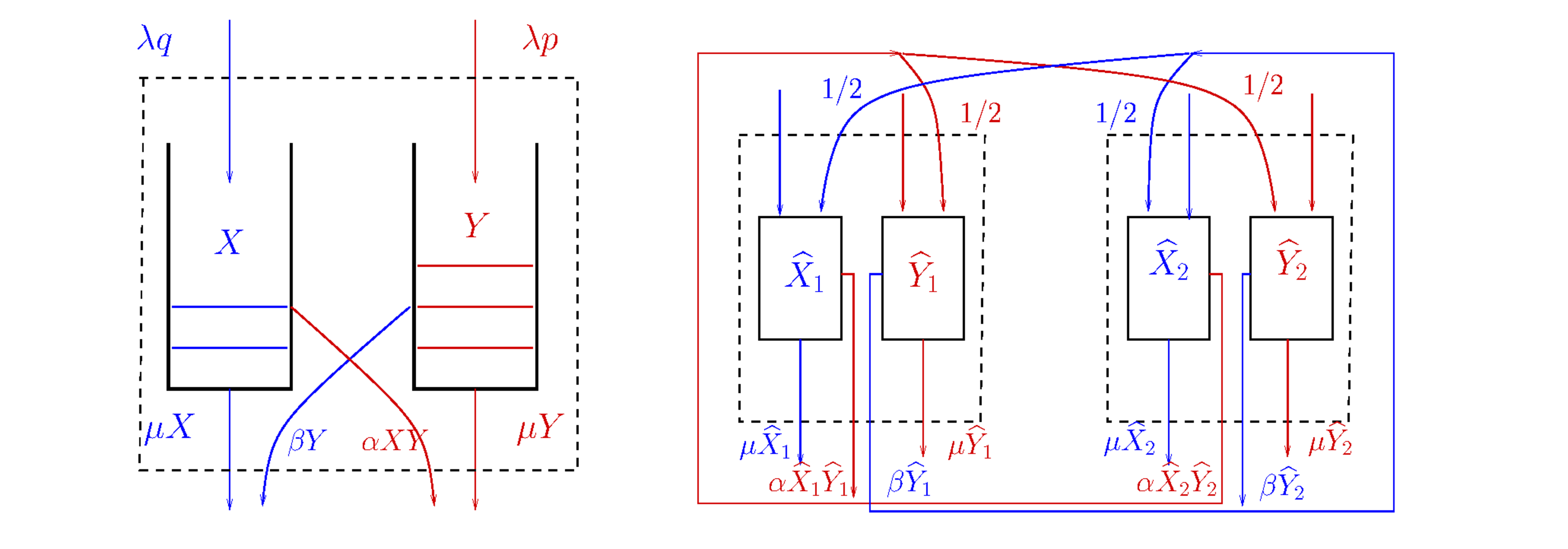}
\caption{On the left, the DOCS reactor. On the right, the routing DOCS network with two stations.}
\label{fig:fig11}
\end{figure}

In fact, if one just needs to determine $\mathbf E (X)$ in the DOCS reactor,
the state of departing customers does not matter. Therefore, we may (and will) analyze
a more general scheme where there are no recoveries but a general departure rate $\nu$
of infected and where $\nu$ and $\mu$ are not linked as above.

The associated PDE for the stationary generating function of the general DOCS reactor is
\begin{eqnarray} 
\label{eq:sipde}
\left( \lambda q(1-x) +\lambda p (1-y)\right) \Phi(x,y) 
= \mu (1-x) \Phi_x(x,y)
+ \nu (1-y) \Phi_y(x,y)
+ \alpha y (1-x)\Phi_{xy}(x,y) . 
\end{eqnarray}

\begin{Rem}
A wave type equation shows up in the DOCS and the Plain SIS systems,
whereas it does not in the AIR model.
This comes from the non-linear infection rate of the form $\alpha XY$ whenever
the state is $(X,Y)$. This leads to the 
$\frac{\partial}{\partial x}$ and  $\frac{\partial}{\partial y}$ terms in the corresponding PDEs,
which in turn lead to the corresponding wave equations. In contrast, in the AIR system,
this non-linearity is replaced by the linear rate $\alpha X\E[Y]$, with $\E[Y]$
seen as a parameter determined by a non-linear relationship, which explains
why the wave equation does not show up.
\end{Rem}

\subsection{The Routing Mean-Field Limit of the DOCS Reactor}
\label{ssec:siremeli}
The {\em DOCS routing mean-field limit}, which will be essential below,
is a variant of the DOCS reactor. It should not be confused with the DOCS thermodynamic
limit (defined in Subsection \ref{sec:ithl} below). This routing mean-field limit
is defined as follows. Consider first a finite system with $N$ stations
as depicted in the right part of Figure \ref{fig:fig11}.
The state variables in station $n$ are denoted by $\widehat X_n(t)$ and $\widehat Y_n(t)$.
Each station is an $\cdot$/M/$\infty$ station with service rate $\mu$ and external arrival rate $\lambda$.
Each external arrival is independently declared infected with probability $p$ and susceptible otherwise.
In each station, an infected customer turns susceptible with rate $\beta$.
But rather than staying in the station (as in SIS), this newly susceptible instantaneously leaves,
and rather than leaving for good as in the DOCS open reactor, this customer
joins another station chosen at random and independently among the $N$ stations.
Similarly, in station $n$, a susceptible customer turns infected with rate
$\alpha \widehat Y_n(t)$ at time $t$. 
But rather than staying in this station, the newly infected instantaneously 
joins another station chosen at random and independently among the $N$ stations.

When $N$ tends to infinity, we get a mean-field limit which is system with both external arrivals
and internal customer routing.
Again, the existence of this limit will not be discussed in the present paper. 
In the steady state of this limit, the overall
arrival point process of infected in a typical station
is Poisson with rate $\lambda p +\alpha \E[\widehat X\widehat Y]$ 
(sum of external and internal rates)
and that of susceptible is independent and Poisson with rate $\lambda q +\beta \E[\widehat Y]$
(sum of external and internal rates again).
Here $\widehat X$ and $\widehat Y$ represent the stationary state variables in this limit
(we use a hat on these variables to distinguish them from those of the DOCS reactor).

More generally, we will call {\em Routing DOCS reactor} a station 
which behaves as a DOCS reactor but where, in addition to an external infected (resp. 
susceptible) arrival
Poisson input point process of intensity $\lambda p(t)$ (resp. $\lambda q(t)$),
there is an additional ``re-routing'' infected (resp. susceptible)
Poisson point of intensity $\alpha \E[\widehat X(t)\widehat Y(t)]$ (resp.  $\beta \E[X(t)]$).

For all $0\le x\le 1$, $0\le y\le 1$, in steady state, the joint generating
function $\widehat\Phi(x,y)=\E[x^{\widehat X}y^{\widehat Y}]$ satisfies the PDE
\begin{eqnarray} 
\label{eq:routingpde}
& & \hspace{-2cm} \left( (\lambda q+\beta \E[\widehat Y])(1-x)
+(\lambda p +\alpha \E[\widehat X\widehat Y])(1-y)\right)\widehat \Phi(x,y) \nonumber\\
& = & \mu (1-x)\widehat \Phi_x(x,y)
 + 
\left(\mu + \beta \E[\widehat Y] \right)(1-y) \widehat \Phi_y(x,y) +   
\alpha y (1-x)\widehat \Phi_{xy}(x,y) . 
\end{eqnarray}
This PDE is an instance of the DOCS PDE in (\ref{eq:sipde}) with the following specific parameters:
$\lambda q$ is replaced by
$\lambda q+\beta \E[\widehat Y]$,
$\lambda p$ by
$\lambda p +\alpha \E[\widehat X\widehat Y]$, and
$\nu$ is taken equal to
$\mu + \beta$.

Here are a few observations on the routing DOCS models.
Consider first the model with $N$ stations. Consider the whole system as a single system.
Any customer has an exponential life time in the queue with parameter $\mu$. During its lifetime,
an individual changes state but stays in the system. Since
the total arrival rate to the system is $\lambda N$, the total number of customers in the stationary regime
is Poisson with parameter (and mean) $\lambda N/\mu$.
Due to the symmetry, the mean number of all customers in any station is hence $\lambda/\mu$.
Letting $N$ to tend to infinity and noting that uniform integrability holds,
we conclude that the mean number of customers stays equal to $\lambda/\mu$ in the infinite system too.
For all fixed $N$, in the stationary regime, the pairs of $(\widehat X,\widehat Y)$ vectors at
different stations form an exchangeable family of dependent random vectors,
with all coordinates summing up to a Poisson random variable. 

\subsection{The Thermodynamic Limits}
\label{sec:ithl}

\subsubsection{Definition}
The thermodynamic limits pertain to a family of closed networks as
illustrated for SIS in the left part of Figure \ref{fig:figthsis} or for DOCS in
the left part of Figure \ref{fig:figthdocs}.

\begin{figure}[h!]
\centering
%\scalebox{.45}{\input{mclosed-net-sis-reactor.pstex_t}} \hspace{1.0cm}
%\scalebox{.35}{\input{mterm-sis.pstex_t}}
\includegraphics[width=0.99\linewidth]{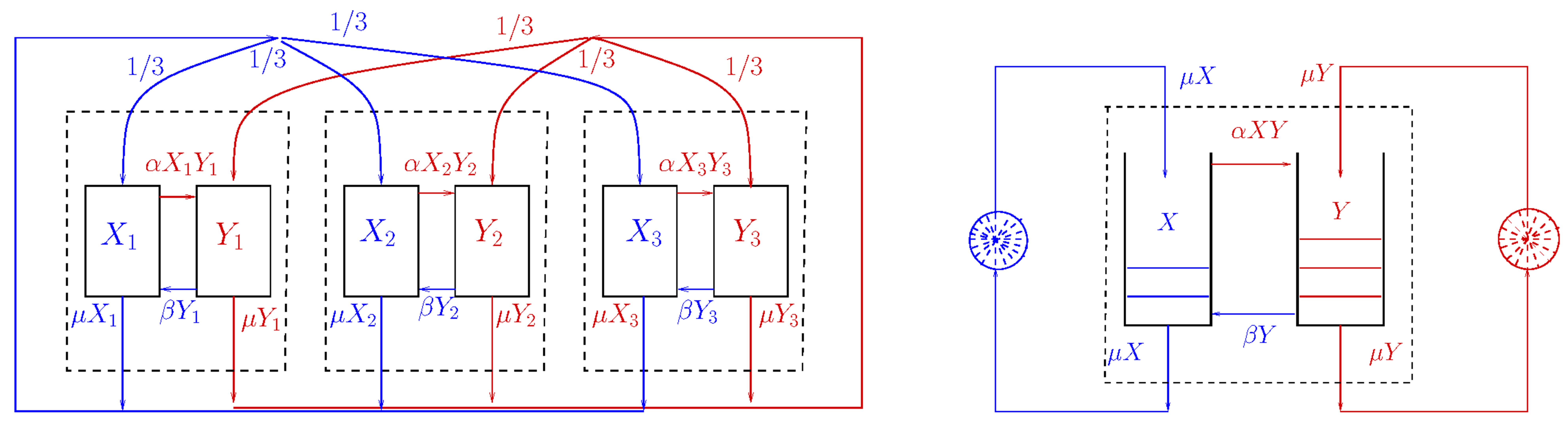}
\caption{ On the left, a closed network of 3 SIS reactors.
On the right, the SIS thermodynamic limit.}
\label{fig:figthsis}
\end{figure}

They are all infinite-dimensional Markov systems.
They can also be seen as certain non-homogeneous Markov processes (see Subsection \ref{ss:nhmc}).
As such, a wide range of asymptotic behaviors are possible for any given initial condition. 
For instance the dynamics could, e.g., converge to a stationary measure, 
or be periodic, or admit an attractor.

In this paper, for all considered cases, we assume that the thermodynamic limit
exists and satisfies the following properties on any compact of time:
\begin{itemize}
\item Stations have independent dynamics;
\item In each station, the arrival point process of susceptible (resp. infected)
is (possibly non-homogeneous) Poisson, with these two processes being independent.
\end{itemize}
This set of properties will be referred to as the thermodynamic propagation of chaos ansatz.

\subsubsection{Instances}
In all instances below, the prelimit is a closed system of $N$ stations.
Each station is again a $\cdot$/M/$\infty$ queue with departure rate $\mu$.
There is a total of $K_N$ customers 
and we assume that $\lim_{N\to \infty} K_N/N= \lambda/\mu:=\eta$.
Here $\eta$ and $\lambda$ are a positive constants
representing the mean population per station and the arrival rate in a station, respectively. 
The routing is independent and uniform at random to all stations.
\paragraph{Plain SIS}
We will call {\em SIS thermodynamic limit (SIS TL)} the infinite-dimensional system obtained
when letting $N$ to infinity, assuming $K_N$ behaves as described above.
This limit is illustrated on the right part of Figure \ref{fig:figthsis}.

\paragraph{AIR}
The {\em AIR thermodynamic limit (AIR TL)} is best described as the 
following variant of the closed system of the abstract depicted
on the left part of Figure \ref{fig:figthair} 
in any station, each susceptible customer swaps to infected with the instantaneous rate
$\alpha \frac 1 N \sum_{k=1}^N \widetilde Y_k$ if there
are $\widetilde Y_k$ infected customers in station $k$, whereas each infected customer recovers and
becomes susceptible with rate $\beta$. 
When letting $N\to \infty$, one gets a variant of the SIS thermodynamic limit (again assumed
to hold here), where stations are independent. 
In this limit, which is depicted in the right part of Figure \ref{fig:figthair},
each station behaves as a AIR AMF reactor (with
$y(t)=\E[\widetilde Y(t)]$) with the following constraint on the parameters:
the external arrival rate of susceptible is equal to
$\mu \E[\widetilde X(t)]$ and that of infected to $\mu \E[\widetilde Y(t)]$.

\begin{figure}[!h]
\centering
%\scalebox{.45}{\input{mclosed-net-air-reactor.pstex_t}} \hspace{1.0cm}
%\scalebox{.35}{\input{mterm-air.pstex_t}}
\includegraphics[width=0.99\linewidth]{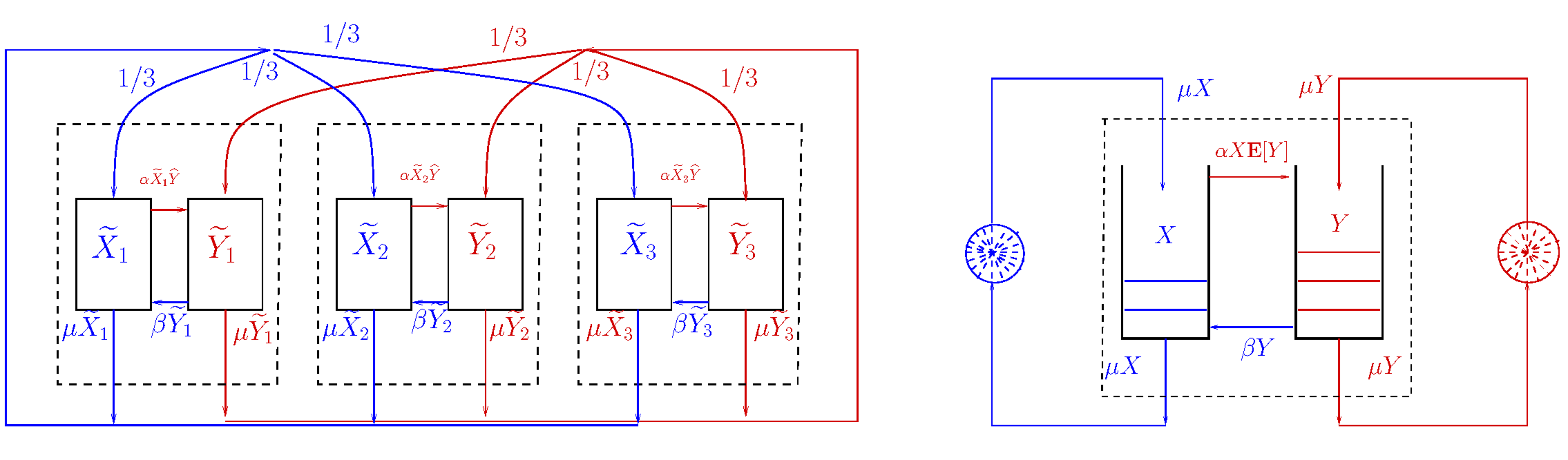}
\caption{ On the left, a closed network of 3 AIR reactors.
Here $\widehat Y = \frac 1 3(\widetilde Y_1+\widetilde Y_2+\widetilde Y_3)$.
On the right, the AIR thermodynamic limit.}
\label{fig:figthair}
\end{figure}

\paragraph{DOCS}
The {\em DOCS thermodynamic limit (DOCS TL)} features
a closed queuing network with $N$ stations and a total of $K_N$ customers 
as illustrated on the left part of Figure \ref{fig:figthdocs}.
If station $n$ has $X_n$ susceptible and $Y_n$ infected customers,
each susceptible customer swaps to infected with the instantaneous rate $\alpha Y_n$;
upon infection, it simultaneously leaves this station and is routed to one of the $N$ stations 
chosen at random. Similarly, each infected customer becomes susceptible with rate $\beta$;
upon recovery, it simultaneously leaves and is routed to a station chosen at random.
In addition, as in the closed SIS network model,
each customer (infected or susceptible) also leaves the station with a departure 
rate $\mu$ and is then also routed to a station chosen at random.
We let $N$ tend to infinity and assume that $K_N$ is such that 
the total input rate to a station tends to $\lambda$ (or equivalently that
the density of customers is $\eta$).
Then, in the limit when it exists, each station behaves as a Routing DOCS reactor
as defined in Subsection \ref{ssec:siremeli},
with the additional consistency property that, in the latter,
the external infected arrival rate $\lambda p^*(t)$ should
match the external infected departure rate $\mu \E[\widehat Y(t)]$,
and similarly, $\lambda q^*(t)$ should be equal to $\mu \E[\widehat X(t)]$.
This fixed point, which characterizes the stationary distributions of the
DOCS thermodynamic limit, 
is illustrated on the right part of Figure \ref{fig:figthdocs}.

\begin{figure}[!h]
\centering
%\scalebox{.45}{\input{closed-docs.pstex_t}} \hspace{1cm}
%\scalebox{.40}{\input{mterm-docs.pstex_t}} 
\includegraphics[width=0.99\linewidth]{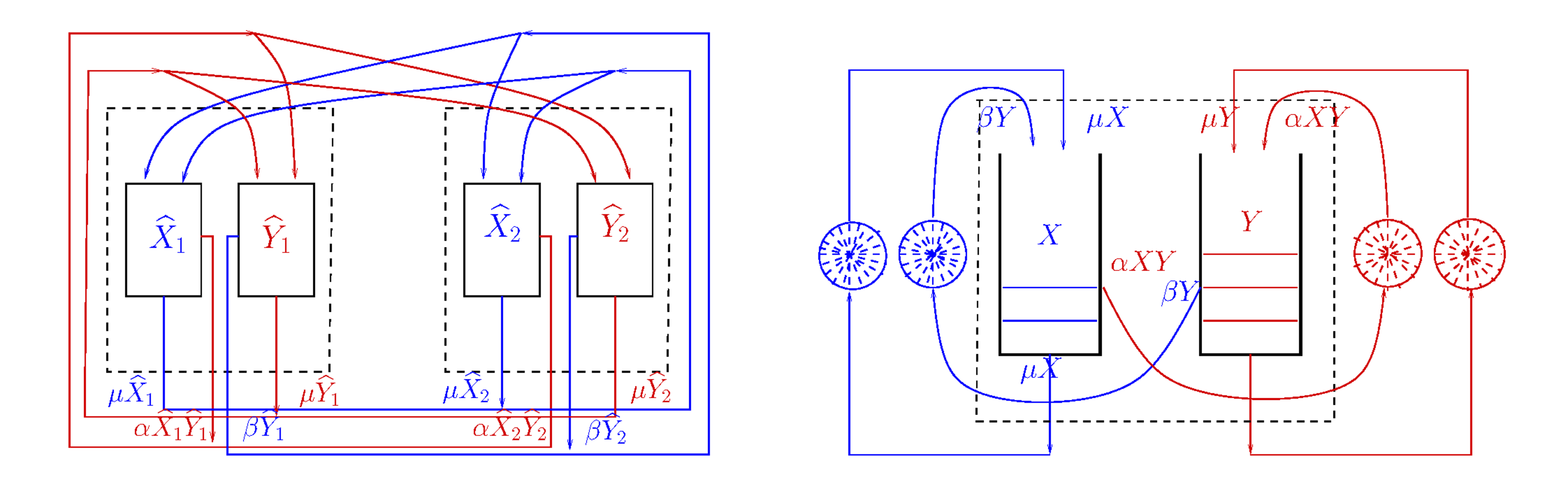}
\caption{On the left, the closed DOCS network with two stations.
At a fork, routing decisions are up or down with probability 1/2.
On the right, the DOCS thermodynamic limit.}
\label{fig:figthdocs}
\end{figure}

\subsubsection{Survival versus Extinction}
Below, for all thermodynamic limits, we assume that propagation of chaos holds.
We will show that under this assumption, the dynamic
of the infinite-dimensional Markov systems in question can be reduced to that of
a finite dimensional non-homogeneous Markov process.

\begin{Def}
\label{def:se}
In any thermodynamic limit, we will say that there is {\em survival} if the associated Markov 
system has a steady state distribution with a fraction $0<p<1$ of susceptible customers.
We will say that there is {\em weak extinction} if there exists no such $p$.
We will say that there is {\em strong extinction} if, for all initial conditions,
the associated Markov system converges to a regime without infected customers.
\end{Def}

\begin{Rem}
Weak extinction does not mean that the epidemic
vanishes in the infinite-dimensional system as time tends to infinity.
It only means that there is no initial condition that stationarizes the
infinite dimensional system in a regime with a positive fraction of infected customers.
\end{Rem}

\begin{Rem}
Like in many other models, survival in the thermodynamic limit should translate into the fact that the 
time to extinction in the prelimit at $N<\infty$ grows exponentially in $N$, where $N$ is
as defined in the abstract.
\end{Rem}

\section{SIS Reactor Analysis}
\label{sec2}

\subsection{Preliminary Observations}
It is easy to check from the PDE (\ref{eq:sispde}) that
$$\Phi(z,z)= e^{-\frac \lambda \mu (1-z)}.$$
A direct probabilistic argument also gives that
$X+Y\sim Pois (\lambda/\mu)$.

We could not solve this PDE. This is why we resort to other techniques 
based on rate conservation.
A variety of conservation equations will be discussed. These are of independent
interest and will also be used in what follows.

\subsection{First Order Relation}
\label{ss:for}
The simplest first order relation states that the rate of arrivals, $\lambda$
should match the rate of departures, namely $\mu \E[X+Y]$.
Note that there is no reason for the rate of arrivals of infected customers, $\lambda p$,
to match that of departures of infected customers, namely $\mu \E[Y]$.
This also holds for susceptible customers.
\begin{Lem}
\label{lem-for}
The following relation holds:

\begin{equation}\label{eq2}
\lambda p +  \alpha \E[XY] = (\mu +\beta) \E[Y] .\end{equation}
\end{Lem}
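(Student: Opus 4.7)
The plan is to prove Lemma \ref{lem-for} by a direct rate-conservation argument on the infected population $Y(t)$ in the SIS reactor in steady state; I will also indicate a PDE-based derivation as a consistency check.

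First, I would identify the jumps of the process $Y(t)$ and their rates at state $(X,Y)$. The process $Y$ jumps upward by $1$ at the following two types of events: an external infected arrival (rate $\lambda p$, independent of the state), and an in-station infection of a susceptible customer (total rate $\alpha X Y$, since each of the $X$ susceptibles becomes infected at instantaneous rate $\alpha Y$). Similarly, $Y$ jumps downward by $1$ at two types of events: the departure of an infected customer (total rate $\mu Y$) and the recovery of an infected customer (total rate $\beta Y$). Departures and infections of susceptibles are the only $Y$-preserving or $Y$-reducing transitions involving $X$, so there is no other contribution to the $Y$-dynamics.

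Next, I would apply rate conservation (equivalently, the generator applied to the identity function $f(x,y)=y$ has zero stationary mean, or Dynkin's formula for the compensated martingale). In steady state, the expected rate of upward unit jumps of $Y$ must equal the expected rate of downward unit jumps, giving
\begin{equation*}
\lambda p + \alpha \E[XY] \;=\; \mu \E[Y] + \beta \E[Y] \;=\; (\mu+\beta)\E[Y],
\end{equation*}
which is \eqref{eq2}. As an internal cross-check, one can recover the same identity from \eqref{eq:sispde}: differentiating both sides with respect to $y$ and evaluating at $x=y=1$, using $\Phi(1,1)=1$, $\Phi_y(1,1)=\E[Y]$, and $\Phi_{xy}(1,1)=\E[XY]$, the left-hand side yields $-\lambda p$ and the right-hand side $-(\mu+\beta)\E[Y]+\alpha\E[XY]$, which rearranges to the claim.

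The only nontrivial point, and hence the main obstacle, is the justification that the martingale associated with the generator applied to $f(x,y)=y$ has zero expectation in steady state, which amounts to verifying that $\E[Y]$ and $\E[XY]$ are finite. Finiteness of these moments follows from the preliminary observation just above (Section \ref{ss:for}) that $X+Y$ is Poisson with mean $\lambda/\mu$ in steady state, so $X+Y$ has finite moments of all orders; in particular $\E[Y]<\infty$ and $\E[XY]\le \E[(X+Y)^2]<\infty$. Given this integrability, the rate-conservation identity is rigorous, and the lemma follows.
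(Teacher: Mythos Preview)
Your proof is correct and covers exactly the same ground as the paper. The paper's one-line proof differentiates the PDE \eqref{eq:sispde} in $y$ and sets $x=y=1$, then remarks that this is the rate conservation principle for $Y$; you lead with the rate-conservation argument and give the PDE derivation as a cross-check, so the two presentations differ only in emphasis.
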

\begin{proof}
We get this relation by differentiating the PDE (\ref{eq:sispde}) w.r.t. $y$ and taking $x=y=1$.
\end{proof}
This receives a simple rate conservation principle interpretation.
We recognize in the LHS of (\ref{eq2}) the increase rate of the mean number of infected customers in steady state
(due to arrivals and infections), whereas the RHS is the mean decrease rate of the same quantity
due to recoveries and departures from the reactor.

Also note that by differentiating the PDE (\ref{eq:sispde}) w.r.t. $x$ and taking $x=y=1$, we get
\begin{equation}
\lambda q  +\beta \E[Y] = \mu \E[X]  +\alpha \E[XY],\end{equation}
This equation is actually the same as that in the lemma (when using the fact that $\E[X]+\E[Y]=\frac \lambda \mu$).

\subsection{Second Order Relations}

\begin{Lem} \label{lemsec}
The following relations hold:
\begin{equation}
\label{eqsecy}
(\lambda p + \mu + \beta ) \E [Y] + \alpha \E[ XY^2] =(\mu +\beta) \E[Y^2].
\end{equation}
and
\begin{equation}
\label{eqsecx}
(\lambda q + \mu) \E [X] +(\alpha+\beta) \E[XY] =\alpha \E[ X^2Y] +\mu \E[X^2].
\end{equation}
\end{Lem}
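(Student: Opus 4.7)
My plan is to mimic the derivation of Lemma~\ref{lem-for}: instead of differentiating the PDE (\ref{eq:sispde}) once with respect to one variable and evaluating at $(1,1)$, I would differentiate it twice (once with respect to $y$ for (\ref{eqsecy}), once with respect to $x$ for (\ref{eqsecx})), and evaluate at $x=y=1$. Writing the PDE schematically as $A\,\Phi = B\,\Phi_x + C\,\Phi_y + D\,\Phi_{xy}$ with $A=\lambda q(1-x)+\lambda p(1-y)$, $B=\mu(1-x)$, $C=\mu(1-y)+\beta(x-y)$ and $D=\alpha y(y-x)$, the key point is that $A$, $B$, $C$, $D$ all vanish at $(1,1)$, which causes most Leibniz terms to drop out.

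For (\ref{eqsecy}), I take $\partial^{2}/\partial y^{2}$ of the PDE. At $(1,1)$, only the terms carrying $A_y = -\lambda p$, $C_y = -(\mu+\beta)$, $D_y = \alpha$ and $D_{yy}=2\alpha$ survive, yielding
\[
2A_y \Phi_y \;=\; 2C_y \Phi_{yy} + D_{yy}\Phi_{xy} + 2D_y \Phi_{xyy}
\]
at $(x,y)=(1,1)$. Using $\Phi_y(1,1)=\E[Y]$, $\Phi_{yy}(1,1)=\E[Y(Y-1)]$, $\Phi_{xy}(1,1)=\E[XY]$ and $\Phi_{xyy}(1,1)=\E[XY(Y-1)]$, the telescoping of the factorial-moment corrections makes the $\E[XY]$ term cancel and leaves exactly (\ref{eqsecy}). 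For (\ref{eqsecx}), I do the symmetric computation, taking $\partial^{2}/\partial x^{2}$ of the PDE. Now the surviving coefficients at $(1,1)$ are $A_x=-\lambda q$, $B_x=-\mu$, $C_x=\beta$ and $D_x=-\alpha$, and the identities $\Phi_{xx}(1,1)=\E[X(X-1)]$, $\Phi_{xxy}(1,1)=\E[X(X-1)Y]$ produce (\ref{eqsecx}) after the analogous simplification.

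Alternatively, one can obtain the same identities by a direct rate-conservation argument on $\E[Y^{2}]$ and $\E[X^{2}]$, writing $d\E[Y^{2}]/dt = 0$ in the steady state, with contributions $\lambda p(2\E[Y]+1)$ from infected arrivals, $2\alpha\E[XY^{2}]+\alpha\E[XY]$ from infections, and $-(\mu+\beta)(2\E[Y^{2}]-\E[Y])$ from recoveries and departures; substituting the first-order identity (\ref{eq2}) to eliminate $\alpha\E[XY]$ yields (\ref{eqsecy}), and the analogous balance for $X^{2}$ yields (\ref{eqsecx}). This probabilistic route is a good sanity check but the PDE differentiation is shorter.

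There is no conceptual difficulty; the only thing to watch is the bookkeeping of Leibniz terms and the conversion from factorial moments $\E[Y(Y-1)]$, $\E[XY(Y-1)]$, $\E[X(X-1)]$, $\E[X(X-1)Y]$ to the ordinary moments appearing in the statement. In particular, it is the interaction between the $\E[XY]$ term coming from $D_{yy}\Phi_{xy}$ and the $-\alpha\E[XY]$ coming from expanding $\E[XY(Y-1)]$ that makes the $\alpha\E[XY]$ contribution disappear from (\ref{eqsecy}); the analogous cancellation occurs in (\ref{eqsecx}).
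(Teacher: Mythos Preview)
Your proposal is correct and follows exactly the paper's approach: differentiate the PDE~(\ref{eq:sispde}) twice in $y$ (resp.\ $x$) and evaluate at $(1,1)$. One small expository point: the ``analogous cancellation'' you mention for (\ref{eqsecx}) does not quite occur, since $D_{xx}=0$ there; rather, the $\alpha\E[XY]$ term arising from expanding $\E[X(X-1)Y]=\E[X^2Y]-\E[XY]$ survives and is precisely what produces the $(\alpha+\beta)\E[XY]$ on the left-hand side.
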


\begin{proof}
The first relation is obtained when differentiating the PDE (\ref{eq:sispde}) w.r.t. $y$ twice and taking $x=y=1$,
the second when differentiating the PDE w.r.t. $x$ twice and taking $x=y=1$.
\end{proof}

\subsection{Higher Order Relations}
\begin{Lem}
For all non-negative integers $m$ and $n$,
the RCP relation for $X^mY^n$ reads
\begin{eqnarray}
& & \hspace{-1cm}\lambda p \E[X^m\left((Y+1)^n-Y^n\right)] +\lambda q \E[\left((X+1)^m-X^m\right)Y^n]\nonumber \\
& + &
\beta \E[Y\left((X+1)^m(Y-1)^n-X^mY^n\right)]\nonumber \\
& +&
\alpha \E[XY\left((X-1)^m(Y+1)^n-X^mY^n\right)]\nonumber \\
& + &
\mu \E[X\left((X-1)^m-X^m\right)Y^n ]\nonumber\\
& + & \mu \E[YX^m\left((Y-1)^n-Y^n\right)]=0.
\end{eqnarray}
\end{Lem}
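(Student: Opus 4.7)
The plan is to recognize the stated equation as the Rate Conservation Principle (or, equivalently, the stationarity identity $\E[\mathcal{L} f(X,Y)]=0$) applied to the test function $f(x,y)=x^m y^n$, where $\mathcal{L}$ is the infinitesimal generator of the SIS reactor viewed as a continuous-time Markov chain on $\N^2$.

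First I would list the six transitions of the SIS reactor and their rates: arrival of a susceptible $(X,Y)\to(X+1,Y)$ at rate $\lambda q$; arrival of an infected $(X,Y)\to(X,Y+1)$ at rate $\lambda p$; recovery $(X,Y)\to(X+1,Y-1)$ at rate $\beta Y$; infection $(X,Y)\to(X-1,Y+1)$ at rate $\alpha X Y$; departure of a susceptible $(X,Y)\to(X-1,Y)$ at rate $\mu X$; departure of an infected $(X,Y)\to(X,Y-1)$ at rate $\mu Y$. Writing $\mathcal{L}f(X,Y)=\sum_i r_i(X,Y)\,\bigl[f(\text{post}_i)-f(X,Y)\bigr]$ for $f(x,y)=x^m y^n$, each of the six summands in the lemma appears as one of these six contributions: the $\lambda p$ and $\lambda q$ terms come from the external arrivals, the $\beta$ and $\alpha$ terms from the state swaps $S\to I$ and $I\to S$, and the two $\mu$ terms from the independent $M/M/\infty$-style departures of susceptibles and infected.

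Then I would invoke the stationarity relation $\E[\mathcal{L}f(X,Y)]=0$ in steady state. Reading off the six grouped expectations in the display yields exactly the claimed identity, so the computation is finished by inspection once the generator is written down.

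The only real thing to justify is that all expectations appearing are finite so that stationarity may legitimately be applied and the sum split. This is automatic from the preliminary observation of Subsection~3.1 that $X+Y\sim\Po(\lambda/\mu)$: since $0\le X,Y\le X+Y$ and $X+Y$ has all moments, every $\E[X^i Y^j]$ with $i+j\le m+n+1$ is finite, which covers all terms produced by expanding $(X\pm 1)^m$ and $(Y\pm 1)^n$. Thus the main obstacle is merely bookkeeping, and there is no genuine analytic difficulty.
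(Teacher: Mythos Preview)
Your proof is correct and is essentially the same approach the paper takes: the Rate Conservation Principle invoked in the paper is precisely the stationarity identity $\E[\mathcal{L}f(X,Y)]=0$ for the generator $\mathcal{L}$ of the SIS reactor, and the appendix equations the paper cites are just the special cases $f=X^2$, $f=Y^2$, $f=XY$ of your general computation. Your explicit listing of the six transitions and your moment-finiteness justification via $X+Y\sim\Po(\lambda/\mu)$ make the argument more self-contained than the paper's terse reference, but the underlying idea is identical.
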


The proof is based on Equations (\ref{eqsecy-rcp}), (\ref{eqsecx-rcp}) and
(\ref{eqsecxy-rcp}) in Appendix \ref{app-sec-crc}.

\section{DOCS Reactor Analysis} 
\label{sec:SIDR}

\subsection{Preliminary Observations}
\label{ssec:SIDR1}
We start with a few observations on this system:
\begin{itemize}
\item By either a simple probabilistic argument or by a direct analytical derivation based on the PDE,
we get that $\Phi(1,y)=e^{-\theta(1-y)}$, that is $Y$ is Poisson with mean
$\theta=\frac{\lambda p}{\nu}$ (the $Y$ process is that of an autonomous $M/M/\infty$ queue
with input rate $\lambda$ and service rate $\nu$);
\item Here, the random variable $X+Y$ has no reason to be Poisson;
\item The stationary output rate of infected, $\nu \E[Y]$, should match the input rate
of infected ($\lambda p$). This is because infected customers evolve as those of an M/M/$\infty$ queue;
\item The stationary depletion rate of susceptible, $\mu \E[X]+ \alpha \E[XY]$, 
should match the increase rate of susceptible ($\lambda q$).
This is because susceptible evolve as in a stationary queue.
\end{itemize}
We summarize the last two observations in the following conservation equations:
\begin{eqnarray}
\lambda p  & = & \nu\E[Y]\\
\lambda q  & = & \mu\E[X] +\alpha \E[XY].
\end{eqnarray}

\subsection{Analytical Solution}
\label{sec:sid-analy}

By differentiating Equation \eqref{eq:sipde} w.r.t. $x$ and taking $x=1$, we get
\begin{eqnarray*} 
-\lambda q \Phi(1,y) 
+\left(\lambda p (1-y) +\mu\right) \Phi_x(1,y) 
= \left(\nu (1-y) -\alpha y\right) \Phi_{xy}(1,y),
\end{eqnarray*}
or equivalently, the function $\Psi(y):= \Phi_x(1,y)$ satisfies the ODE
\begin{eqnarray} 
\label{eq:ode1}
\left(\nu (1-y) -\alpha y\right) \Psi'(y)=
\left(\lambda p (1-y) +\mu\right) \Psi(y) -\lambda q e^{-\theta(1-y)}. 
\end{eqnarray}
This can be rewritten as the first order ODE
\begin{eqnarray} 
\Psi'(y)= h(y) \Psi(y) - g(y),
\end{eqnarray}
with 
\begin{equation}h(y):= 
\frac {\lambda p (1-y) +\mu} {\nu (1-y) -\alpha y}, \qquad
g(y):=
\frac{ \lambda q e^{-\theta(1-y)}} {\nu (1-y) -\alpha y}  .
\end{equation}
Notice that the coefficients of this first order ODE have a singularity at
$$y=y^*:= \frac{\nu}{\nu+\alpha}<1.$$ 
One can nevertheless work out the solution of this ODE for y in a neighborhood of 1 not including $y^*$.
The homogeneous equation has for solution
$$ T(y):= e^{\int_1^y h(z) dz}.$$
Hence when looking for a solution of the form
$\Psi(y)=T(y) a(y)$, we get that
$$ a(y)= a(1) - \int_1^y   g(u) e^{-\int_1^u h(z) dz} du.$$ 
Hence
\begin{eqnarray} 
\Psi(y)= e^{\int_1^y h(z) dz}\left(
\Psi(1) - \int_1^y   g(u) e^{-\int_1^u h(z) dz} du\right).
\end{eqnarray}
It is easy to check that for $u$ in a right neighborhood of $y^*$,
$e^{-\int_1^u h(z) dz}\sim K (u-y^*)^{b}$ with $K$ a constant and
$$b=\frac{ \lambda p \alpha +\mu(\alpha+\nu)}{(\alpha +\nu)^2}.$$
Hence the integral
$$\int_1^{y^*} g(u) e^{-\int_1^u h(z) dz} du$$
is well defined and finite despite the singularity of $g$ at $y^*$.
This integral must match $\Psi(1)$. Indeed, if it were not the
case, the function $\Psi$ would have a singularity at $y^*$, which is not possible.
Hence
\begin{equation*}\Psi(1)=\int_1^{y^*} g(u) e^{-\int_1^u h(z) dz} du,\end{equation*}
that is
\begin{equation}\E[X]=\int_1^{\frac{\nu }{\nu+\alpha}}
\frac{ \lambda q e^{-\frac{\lambda p}\nu (1-u)}} {\nu (1-u) -\alpha u} 
\exp\left(-\int_1^u 
\frac {\lambda p (1-z) +\mu} {\nu (1-z) -\alpha z} dz\right) du.
\end{equation}

Using the fact that
$$ 
\int_1^u 
\frac {\lambda p (1-z) +\mu} {\nu (1-z) -\alpha z} dz
=-\frac 1 {\nu +\alpha} \left(
\lambda p (1-u) + \left(\mu +\lambda p\frac{\alpha}{\alpha +\nu}\right) \ln \left(\frac{u-y^*}{1-y^*}\right) \right),
$$
with $y^*=\frac \nu{\nu +\alpha}$,
we get that
\begin{eqnarray*}
\E[X] & = &
\lambda q 
\int_1^{\frac{\nu }{\nu+\alpha}}
e^{-\frac{\lambda p}\nu (1-u)}
e^{\frac{\lambda p} {\nu+\alpha} (1-u)}
\frac 1 {\nu (1-u) -\alpha u} 
\left( \frac{u-y^*}{1-y^*} \right)^{\frac \mu {\nu +\alpha} +\lambda p\frac{\alpha}{(\nu+\alpha)^2}}
du .
\end{eqnarray*}
That is
\begin{eqnarray}
\E[X] & =  & \frac {\lambda q}
{
(\nu+\alpha)
\left(\frac{\alpha}{\nu+\alpha}\right)^{\frac \mu {\nu +\alpha} +\lambda p\frac{\alpha}{(\nu+\alpha)^2}}
}
\int_{\frac{\nu }{\nu+\alpha}}^1
e^{-\frac{\lambda p \alpha}{\nu(\nu +\alpha)} (1-u)}
\left( u-\frac \nu {\nu+\alpha} \right)^{\frac \mu {\nu +\alpha} +\lambda p\frac{\alpha}{(\nu+\alpha)^2}-1}
du .
\end{eqnarray}

Applying the change of variables
\begin{align*}
u= t \frac{\alpha}{\nu +\alpha} + \frac{\nu}{\nu +\alpha},
\end{align*}
we get
\begin{eqnarray}
\label{eq:odesidpure}
\E[X] & =  & \frac {\lambda q}
{\nu+\alpha
}
\int_{0}^1
e^{-\frac{\lambda p \alpha^2}{\nu(\nu +\alpha)^2} (1-t)}
t^{\frac \mu {\nu +\alpha} +\lambda p\frac{\alpha}{(\nu+\alpha)^2}-1}
dt .
\end{eqnarray}

\subsection{Analysis of the Routing DOCS Reactor}

Compared to the SIS Reactor, the Routing DOCS reactor (defined in Subsection \ref{ssec:siremeli})
features much faster migration. This is because the
natural departures (those happening with rate $\mu$ regardless of the state)
are complemented by departures that are caused by a change of state.
Due to the nature of the mean-field model, this goes with an increased
arrival rate of both infected and susceptible customers, in that `external' arrivals
(those happening with rate $\lambda p(t)$ for infected and rate $\lambda q(t)$ for
susceptible) are complemented by 'internal' arrivals (with respective rates
$\alpha \E[\widehat X(t)\widehat Y(t)]$ for infected, and
$\beta \E[\widehat Y(t)]$ for susceptible).
As we shall see (Lemma \ref{lem:stilhold} below),
this accelerated (in and out) migration preserves the mean queue size
in steady state when it exists.

\subsubsection{Stationary Regime of the Routing DOCS Reactor}

An important question is whether 
there exists a stationary regime for this reactor. A necessary condition is that
there exists a probabilistic solution to the PDE (\ref{eq:sipde})
satisfying the above consistency equations.
If it is the case, it follows from the observations preceding the remark of
Subsection \ref{ssec:siremeli} and the results of Section \ref{ssec:SIDR1} that
$\widehat Y$ is Poisson with parameter
\begin{equation}\widehat \theta= \frac {\lambda p +\alpha \E[\widehat X \widehat Y]} {\mu + \beta}.
\end{equation}
The random variable $\widehat X+\widehat Y$ has no reason to be Poisson with the two terms
of the sum independent.

\subsubsection{First Order Rate Conservation}
Assuming that the routing DOCS reactor has a stationary regime,
one can apply the rate conservation equations to this system.
The first order RCP, which says that the input and output rates coincide, reads
\begin{equation}
\label{eq:rmf1x}
\lambda q +\beta \E[\widehat Y] = \mu \E[\widehat X] +\alpha \E[\widehat X \widehat Y]
\end{equation}
for susceptible customers and
\begin{equation}
\label{eq:rmf1y}
\lambda p + \alpha \E[\widehat X \widehat Y] = (\beta + \mu) \E[\widehat Y] 
\end{equation}
for infected ones. Note that these
match the first order relations of the SIS reactor.
\begin{Lem}
\label{lem:stilhold}
In the stationary regime of the DOCS routing mean-field model, one also has
\begin{equation}
\label{eq:stilhold}
\E[\widehat Y] +\E[\widehat X] = \frac {\lambda} {\mu}.
\end{equation}
\end{Lem}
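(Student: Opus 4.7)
The plan is to derive the total-population identity directly from the two first-order rate conservation equations (\ref{eq:rmf1x}) and (\ref{eq:rmf1y}) that have just been established. The key observation is that although the infection mechanism couples susceptible and infected populations through the cross moment $\E[\widehat X \widehat Y]$, this cross moment appears with opposite signs on the two sides when one sums the balance equations for the two customer types, and hence cancels. The same is true for the term $\beta \E[\widehat Y]$ coming from recoveries.

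More concretely, I would simply add (\ref{eq:rmf1x}) and (\ref{eq:rmf1y}). On the left-hand side, this yields $\lambda q + \lambda p + \beta \E[\widehat Y] + \alpha \E[\widehat X \widehat Y] = \lambda + \beta \E[\widehat Y] + \alpha \E[\widehat X \widehat Y]$, using $p+q=1$. On the right-hand side, one obtains $\mu \E[\widehat X] + \alpha \E[\widehat X \widehat Y] + (\mu + \beta)\E[\widehat Y]$. Subtracting the common $\alpha \E[\widehat X \widehat Y] + \beta \E[\widehat Y]$ from both sides yields $\lambda = \mu(\E[\widehat X] + \E[\widehat Y])$, which is exactly (\ref{eq:stilhold}).

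As a sanity check, the same identity can be obtained by a purely probabilistic argument along the lines already sketched in the paragraph preceding the remark of Subsection \ref{ssec:siremeli}: regardless of state, each customer spends an $\mathrm{Exp}(\mu)$ sojourn in a station before some departure (either a natural one at rate $\mu$, or one triggered by a state change followed by immediate re-routing, counted as an arrival to the next station). Since in the prelimit with $N$ stations the total external input rate is $\lambda N$ and the overall per-customer sojourn rate is $\mu$, the total number of customers in the system is Poisson with mean $\lambda N/\mu$; by symmetry and exchangeability, each station has mean population $\lambda/\mu$, and uniform integrability preserves this in the $N\to\infty$ limit. This gives an alternative, more structural route to (\ref{eq:stilhold}), but the algebraic derivation above is strictly shorter and self-contained given the results already proved.

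There is no real obstacle here: the lemma is essentially a bookkeeping consequence of the two balance equations established immediately before, and the only thing to verify is that the cross-moment and recovery terms genuinely cancel, which is straightforward.
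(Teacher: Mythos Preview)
Your main argument---adding (\ref{eq:rmf1x}) and (\ref{eq:rmf1y}) and cancelling the common $\alpha\E[\widehat X\widehat Y]$ and $\beta\E[\widehat Y]$ terms---is exactly the paper's proof.

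One small slip in your sanity-check paragraph: the sojourn time of a customer \emph{in a single station} is not $\mathrm{Exp}(\mu)$, since state-change-triggered departures add to the exit rate there; what is $\mathrm{Exp}(\mu)$ is the lifetime of a customer in the \emph{whole} $N$-station network (only natural $\mu$-departures remove it from the system), and that is the quantity feeding the Little-type argument you then correctly carry out.
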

\begin{proof}
The result is obtained by adding up the last two equations.
\end{proof}
The fact that the mean total population is the same as in the
SIS model is remarkable as, here, and in contrast to the SIS reactor,
$\widehat X +\widehat Y$ is not Poisson.

\subsubsection{Higher Order Rate Conservation}
The setting is that of the last subsection.
The RCP for $\widehat Y(\widehat Y-1)$ reads
\begin{equation}
\label{eq:rmf2y}
\left(\lambda p + \mu +\beta\right) \E[\widehat Y]
+ \alpha \E[\widehat X \widehat Y] \E[\widehat Y]
= (\beta + \mu) \E[\widehat Y^2] .
\end{equation}
Note that it is similar to 
(\ref{eqsecy}), except that $\E[XY^2]$ is replaced by $\E[\widehat X\widehat Y] \E[\widehat Y]$.
Similarly, the RCP for $\widehat X(\widehat X-1)$ reads
\begin{equation}
\label{eq:rmf2x}
(\lambda q + \mu) \E[\widehat X] + \beta \E[\widehat Y] \E[\widehat X]
+\alpha \E[\widehat X \widehat Y] 
= \alpha \E[\widehat X^2\widehat Y] + \mu \E[\widehat X^2], 
\end{equation}
which is similar to 
(\ref{eqsecx}), except that $\beta \E[XY]$ is replaced by  $\beta \E[\widehat X] \E[\widehat Y]$.
Finally, the RCP for $\widehat X \widehat Y$ reads
\begin{equation}
\label{eq:rmf2xy}
\left(\lambda q + \beta \E[\widehat Y]\right) \E[\widehat Y]
+\left(\lambda p + \alpha \E[\widehat X \widehat Y]\right) \E[\widehat X]
= (\beta + 2 \mu) \E[\widehat X \widehat Y] 
+ \alpha \E[\widehat X \widehat Y^2].
\end{equation}
Note that the complexity of the last two equations remains similar to that
of the corresponding equations in the initial system.

\section{AIR Reactor Analysis}
\label{sec:airR}

\subsection{The Reactor}

Thanks to the linearity of the rates, this AIR queuing network defined in Section \ref{sec:sum}
has a product-form distribution in steady state which is the product of two
Poisson distribution, with parameter $\frac {\lambda_1}{\mu}$ in station 1
and $\frac {\lambda_2}{\mu}$ in station 2.
Direct computations based on the traffic equations
\begin{eqnarray*}
\lambda_1 & = &  \lambda q +\lambda_2 \frac \beta {\mu +\beta}\\
\lambda_2 & = &  \lambda p +\lambda_1 \frac {\alpha y} {\mu +\alpha y}
\end{eqnarray*}
give that
$$\lambda_1 = \frac{(\mu+\alpha y)(\beta +\mu q)\lambda}{(\mu +\beta)(\mu+\alpha y)-\beta\alpha y}\ .$$
We also have $x=\frac {\lambda_1}{\mu+\alpha y}=\frac \lambda \mu -y$, with $x$ the mean queue
size in station 1.

\subsection{The Averaging Mean Field Case}
Consider the averaging mean-field limit when it exists.
Denote by $\widetilde X$ the stationary number of susceptible customers
in the typical station and by $\widetilde Y$ the stationary number of infected ones in this system.
By arguments similar to those in Subsection \ref{ss:for}, these state variables satisfy the relations
\begin{equation}
\label{eq6bis}
\lambda q
+ \beta \E[\widetilde Y] 
= \mu \E[\widetilde X] 
+ \alpha \E[\widetilde X]\E[\widetilde Y],\end{equation}
where the independence comes from the fact that in the limit, the infection rate is constant
and equal to $\alpha y=\alpha \E[\widetilde Y]$. Using this and the fact that
$\E[\widetilde X]=\frac \lambda \mu -y$, one gets
\begin{equation}\label{eq:traf}
\lambda q =
\mu\left(\frac \lambda \mu - y\right)-\beta y + \alpha y
\left(\frac \lambda \mu - y\right).
\end{equation}
Thus, in the averaging mean-field version of the SIS reactor,
\begin{equation}  \E[\widetilde X] = \frac{\mu +\beta+\alpha \frac \lambda \mu 
-\sqrt{(\mu +\beta+\alpha \frac \lambda \mu )^2 -4\alpha\lambda \left(q+\frac \beta \mu \right)}}{2\alpha}
\end{equation}
and
\begin{equation}
\label{eq:19}
 \E[\widetilde Y] = \frac \lambda \mu -\frac{\mu +\beta+\alpha \frac \lambda \mu 
-\sqrt{(\mu +\beta+\alpha \frac \lambda \mu )^2 -4\alpha\lambda \left(q+\frac \beta \mu \right)}}{2\alpha}.
\end{equation}

\section{SIS Thermodynamic Limit Analysis}
\label{sec:thermo}

The natural parameters of the SIS thermodynamic limit are
$(\eta,\mu,\alpha,\beta)$ with $\eta$ the density parameter 
(that is the mean number of customers per station, whatever their state), 
$\mu$ the motion rate, $\alpha$ the infection rate, and $\beta$ the recovery rate.
Note that the arrival rate in a station (whatever the state) is then $\lambda=\eta \mu$. Another natural
parameterization is hence $(\lambda,\mu,\alpha,\beta)$.

\subsection{Fixed-Point Equations for the SIS Reactor}
\label{sec3}
If this thermodynamic model admits a stationary regime, then 
there exists, in the open loop SIS reactor with parameters $\eta,\mu,\alpha,\beta$,
a value of $p$, say $p^*$, such that, for this value of $p$,
the external arrival rate $\lambda p$ of infected matches the 
external departure rate of infected, namely $\lambda p^*=\mu \E[Y]$
(or equivalently $g(p^*)=p^*$).
Similarly, we should also have $\lambda q^*= \mu \E[X]$.

Note that since the total arrival rate of infected, $\lambda p^* +\alpha \E[XY]$,
necessarily matches the 
total departure rate of infected, $(\mu+\beta) \E[Y]$, the fact that
$\lambda p^*=\mu \E[Y]$ is equivalent to the fact that $\alpha \E[XY] = \beta \E[Y]$.

\subsubsection{Arrival versus Departure Rate of Infected Customers}

Let $p_o$ denote the proportion of infected customers in the departure process of
the SIS reactor in the steady state, namely
$$ p_o= \E[Y] \frac \mu \lambda.$$
Here is another representation of $p_0$ obtained when
considering departures within the first busy cycle of an $M/M/\infty$ queue
(i.e., the cycle starting when the queue moves from empty to busy and ending at the next
event of this type). By the SLLN, $p_o$ may be represented as
\begin{align*}
p_o =  \frac{{\mathbf E} D_I(T)}{{\mathbf E} D(T)},
\end{align*}
where $T$ is the duration of a typical busy cycle, $D(T)$ the number of customers served within the cycle,
and $D_I(T)$ the number of customers departed from the system in the infected state within the cycle. 

Let us fix strictly positive parameters $\mu,
\alpha$ and $\beta$ and consider $p_o$ as a function of $p$ and $\eta$ only, say 
\begin{align}
\label{eq:gfunk}
p_o=g(p,\eta).
\end{align} 
If $\eta$ is fixed too, then we write $g(p)$ instead of $g(p,\eta)$.
Clearly, $g(0)=0$ and $g(1)<1$.

The next lemmas are proved in Appendix \ref{secap1}.

\begin{Lem}\label{lem:pop}
Function $g$ is an
increasing, strictly concave, and differentiable function on $[0,1]$.
\end{Lem}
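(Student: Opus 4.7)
The plan is to couple the SIS reactors for all values of $p\in[0,1]$ on one probability space via a Harris-type graphical construction. Fix once and for all a rate-$\lambda$ Poisson process of arrivals; for each arrival $i$ an independent $\Exp(\mu)$ sojourn $\sigma_i$ and an independent uniform mark $U_i\in[0,1]$; for every ordered pair of co-present customers an independent rate-$\alpha$ infection clock; and for each customer an independent rate-$\beta$ recovery clock. In the $p$-reactor, arrival $i$ starts infected if and only if $U_i\le p$, and the clocks act as in the standard graphical representation of the contact process.

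For monotonicity, fix $0\le p<p'\le 1$ and run both reactors under this common randomness. An induction on the event times shows that the sets of infected customers satisfy $I(t;p)\subseteq I(t;p')$ at every time $t$: arrivals insert $i$ into both sets if $U_i\le p$ and only into the larger one if $U_i\in(p,p']$; infection clicks can only add indices; recoveries and departures remove identically from the sets to which the index belongs. Hence $Y(t;p)\le Y(t;p')$ pathwise and, in steady state, $g(p)\le g(p')$. Strict inequality follows from the positive probability that some arrival has $U_i\in(p,p']$ and remains infected over a positive-length stay.

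For differentiability, the reactor is a positive-recurrent Markov chain on $\N^2$ whose generator is affine in $p$ and whose total population $X+Y$ is $\Po(\eta)$ independently of $p$. Standard analytic perturbation theory for ergodic Markov chains, applied to the invariant measure, gives that $p\mapsto\E[Y(p)]$ is analytic on an open neighborhood of $[0,1]$; equivalently one expands $\Phi(x,y;p)$ as a convergent power series in $p$ directly from the PDE \eqref{eq:sispde}, whose coefficients are linear in $p$. Either way, $g$ is smooth on $[0,1]$.

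Strict concavity is the main obstacle. I would proceed as follows. Under the coupling, for a Palm-typical arrival, let $J(p)\in\{0,1\}$ denote the indicator that it departs infected from the $p$-reactor. By the monotonicity argument $J(p)$ is a nondecreasing $\{0,1\}$-valued function of $p$, so $J(p)=\one\{\tau\le p\}$ for a random threshold $\tau\in[0,1]\cup\{+\infty\}$, and $g(p)=\P_0(\tau\le p)$. Strict concavity of $g$ on $[0,1]$ is then equivalent to $\tau$ admitting a strictly decreasing density there. The key is to describe $\tau$ explicitly in terms of the graphical noise: it is the infimum of $\max_{j\in S}U_j$ over all subsets $S$ of customers that, were they initially infected, would drive the typical customer to depart infected through an admissible cascade of infection clicks before its own departure. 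A Russo-type pivotal computation on this monotone system, combined with the observation that each additional initial infection brings a strictly smaller marginal contribution to the pivotal measure as $p$ grows (because the set of candidate witnesses remaining to be activated shrinks), then yields a strictly decreasing density for $\tau$. Establishing this structural diminishing-returns property of the pivotal set is the delicate step and should carry most of the technical weight of the appendix proof.
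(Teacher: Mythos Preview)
Your monotonicity argument via the simultaneous graphical coupling is correct and is essentially the paper's first step, done there for two fixed values of $p$ within a busy cycle rather than for all $p$ at once.

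For differentiability, your appeal to analytic perturbation of the invariant measure is a legitimate alternative route, though on an unbounded state space it needs some justification (a Lyapunov bound or uniform spectral gap to control the resolvent). The paper instead gives an elementary four-colour argument: it colours arrivals with marks in $[p-h,p)$ violet and in $[p,p+h)$ magenta, observes that on the event of exactly one such arrival the violet and magenta dynamics are identical by symmetry, and concludes that the left and right difference quotients agree up to $o(1)$. This avoids any functional-analytic machinery.

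The concavity argument is where there is a genuine gap. Writing $g(p)=\P(\tau\le p)$ is fine, but the claim that the density of $\tau$ is strictly decreasing is exactly the concavity you are trying to prove, and your justification (``the set of candidate witnesses remaining to be activated shrinks'') stays heuristic: Russo's formula gives the derivative but provides no a priori reason for it to be monotone in a general monotone system. The paper's device, which you are missing, is a \emph{three-colour} coupling. Arrivals are coloured red with probability $p$, magenta with probability $r$, green otherwise, with the rules that a red infecting anyone produces a red, a magenta infecting a green or magenta produces a magenta, and a magenta infecting a red leaves it red. Merging magenta with green recovers the $p$-system; merging magenta with red recovers the $(p+r)$-system; so the magenta departures encode the increment $g(p+r)-g(p)$. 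One then compares with the $(\widehat p,r,\widehat q)$ three-colour system for $\widehat p>p$ and proves, by induction over event times, the pathwise set inclusion $\widehat{\mathcal N}_m(t)\subseteq \mathcal N_m(t)$: the extra reds in the $\widehat p$-system absorb would-be magentas both at arrival and through red-on-magenta infections, so the magenta population, and hence the increment, can only shrink. Strictness is then obtained from one explicit positive-probability scenario. This set inclusion is the concrete diminishing-returns mechanism; without it, or something equivalent, the Russo route does not close.
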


\begin{Lem}\label{lem:unic}
Depending on parameters $\eta,\mu,\alpha$ and $\beta$, either there is only one solutions $p=0$ to 
the equation 
\begin{align}\label{fixedpoint}
p=g(p)
\end{align} 
or there are exactly two solutions, $p=0$ and 
\begin{align}\label{plambda}
p^*\equiv p^*(\eta,\mu,\alpha,\beta)\in (0,1).
\end{align}
Let $g'(0)$ denote the right derivative of the function $g(p)$ at 0,
which exists due to the previous lemma. 
Then 
\begin{align}\label{p*exists}
p^* \ \ \mbox{exists if and only if}  \ \ g'(0)>1.
\end{align}
\end{Lem}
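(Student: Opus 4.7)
The plan is to reduce the lemma to an elementary analysis of the function $h(p) := g(p) - p$ on $[0,1]$, relying entirely on the structural properties supplied by Lemma~\ref{lem:pop}. First observe that $h(0) = g(0) = 0$ and $h(1) = g(1) - 1 < 0$, since we already know $g(0)=0$ and $g(1)<1$. Because $g$ is differentiable and strictly concave on $[0,1]$ and $p \mapsto p$ is affine, $h$ is differentiable and strictly concave as well. The solutions of the fixed-point equation \eqref{fixedpoint} are precisely the zeros of $h$ in $[0,1]$.

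For the dichotomy on the number of solutions, I would establish that $h$ admits at most one zero in $(0,1]$. Assume for contradiction the existence of two such zeros $p_1 < p_2$. Writing $p_1 = (1-t)\cdot 0 + t\cdot p_2$ with $t = p_1/p_2 \in (0,1)$, strict concavity of $h$ together with $h(0)=h(p_2)=0$ gives
\[
h(p_1) > (1-t)\,h(0) + t\,h(p_2) = 0,
\]
contradicting $h(p_1)=0$. Hence, apart from the trivial root $p=0$, there is at most one further root in $(0,1]$.

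For the existence criterion \eqref{p*exists}, I would split according to the sign of $h'(0) = g'(0) - 1$. If $g'(0) > 1$, then $h'(0) > 0$, so $h(p) > 0$ for all sufficiently small $p > 0$; combined with $h(1) < 0$ and continuity, the intermediate value theorem produces a zero $p^* \in (0,1)$, which by the previous paragraph is unique and lies strictly between $0$ and $1$. Conversely, if $g'(0) \le 1$, then $h'(0) \le 0$, and strict concavity of $h$ forces $h'$ to be strictly decreasing on $[0,1]$; consequently $h'(p) < 0$ for every $p > 0$, so $h$ is strictly decreasing on $[0,1]$ and $h(p) < h(0) = 0$ for all $p \in (0,1]$. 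Thus no nonzero fixed point exists, which gives the ``only if'' direction.

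No substantive obstacle arises here: once the monotonicity, strict concavity, and differentiability of $g$ from Lemma~\ref{lem:pop} are granted, together with the boundary values $g(0)=0$ and $g(1)<1$, the lemma is a short convex-analytic exercise on the difference $h = g - \mathrm{id}$. The real technical content lies upstream, in proving that the map from input infected proportion $p$ to steady-state output infected proportion $g(p)$ is itself a smooth, strictly concave, and increasing function.
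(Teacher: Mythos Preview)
Your proof is correct and follows essentially the same approach as the paper: both use the boundary values $g(0)=0$, $g(1)<1$ together with the monotonicity, strict concavity, and differentiability from Lemma~\ref{lem:pop} to conclude that the fixed-point equation has either the single solution $p=0$ (when $g'(0)\le 1$) or exactly two solutions $0$ and $p^*\in(0,1)$ (when $g'(0)>1$). Your write-up is more explicit about the convex-analytic steps, but the argument is the same.
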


\subsubsection{Main Results on Fixed Point}

In this subsection,
we consider $p_o$ as a function of the two parameters, $p$ and $\eta$,
$p_o= g(p,\eta)$.
The proof of the following lemma is given in Subsection \ref{secap1}.

\begin{Lem}\label{lem8}
The function 
$$\eta \to g'(0,\eta)$$
is strictly increasing.
\end{Lem}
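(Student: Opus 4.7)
My plan is to identify $g'(0,\eta)$ with the mean size of an \emph{infection excursion} and then prove strict monotonicity in $\eta$ by a pathwise coupling argument. First I would establish the probabilistic representation
\[
g'(0,\eta) \;=\; N(\eta),
\]
where $N(\eta)$ is the expected number of infected departures in the following single-seed dynamics: inject at time $0$ one tagged infected customer into an otherwise all-susceptible station whose susceptible population is distributed as the Poisson$(\eta)$ stationary law of the $M/M/\infty$ queue with arrival rate $\lambda=\mu\eta$ and service rate $\mu$, and then run the SIS dynamics with parameters $(\alpha,\beta,\mu)$, counting infected departures up to the first time $\tau$ at which no infected customer remains. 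This identity follows from $g(p,\eta)=\mu\E[Y]/\lambda=\E[Y]/\eta$ together with a Campbell-type argument: for small $p$, infected arrivals occur at rate $\lambda p$ and trigger asymptotically non-overlapping excursions of expected size $N(\eta)$, so $\mu\E[Y]=\lambda p\,N(\eta)+o(p)$, whence $g(p,\eta)/p\to N(\eta)$.

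For $0<\eta_1<\eta_2$, I would realize both excursions on a single probability space by a graphical (Harris-type) construction. Attach to each customer an exponential$(\mu)$ sojourn clock and an exponential$(\beta)$ recovery clock, and to each unordered pair of simultaneously present customers an exponential$(\alpha)$ infection clock. Couple the two systems by taking the system-$2$ susceptible arrivals to be the union of those of system $1$ with an independent Poisson process of rate $\mu(\eta_2-\eta_1)$ of ``extra'' susceptible arrivals; clocks of shared customers and of shared pairs are shared, all clocks involving an extra customer are independent, and both systems start from the same tagged initial infected. A routine induction on the sequence of events then shows that the set $Y_1(t)$ of infected in system $1$ is contained in $Y_2(t)$ at every time: any infection firing in system $1$ has its source in $Y_1\subseteq Y_2$ and either fires in system $2$ as well or finds the target already infected in $Y_2$, while departures and recoveries of shared customers are synchronized. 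Hence the excursion departure counts satisfy $D_I^{\mathrm{cl},1}\leq D_I^{\mathrm{cl},2}$ pathwise, so $N(\eta_1)\leq N(\eta_2)$. Strict inequality in expectation is obtained by exhibiting a positive-probability event on which $D_I^{\mathrm{cl},2}\geq D_I^{\mathrm{cl},1}+1$: with probability $1-e^{-(\eta_2-\eta_1)}>0$ at least one extra susceptible $E$ is present at time $0$; conditionally on this, with positive probability the infection clock between the initial infected and $E$ fires before any competing clock, and thereafter $E$'s sojourn clock fires before its recovery clock, producing an infected departure in system $2$ with no counterpart in system $1$.

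The main obstacle is making the excursion identity $g'(0,\eta)=N(\eta)$ fully rigorous: one must quantify and discard the error from rare excursion overlaps and secure uniform integrability of $\E[Y]/p$ as $p\to 0$. A secondary technicality is verifying that $N(\eta)<\infty$ on the whole range of $\eta$, without which strict monotonicity would be degenerate. Integrability can be justified by the self-limiting nature of the excursion: conditionally on $Y=y$, the stationary mean of the susceptible population equals $(\mu\eta+\beta y)/(\mu+\alpha y)$, which tends to $\beta/\alpha$ as $y\to\infty$, so the effective per-infected offspring rate $\alpha X/(\mu+\beta)$ becomes strictly subcritical once $Y$ is large, ensuring that the excursion dies out with finite expected infected person-time.
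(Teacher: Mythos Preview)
Your plan is sound and would work, but it takes a genuinely different route from the paper's argument, and the difference is instructive.

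\textbf{Coupling direction.} You couple the smaller system into the larger one by \emph{adding} an independent Poisson stream of extra susceptibles (rate $\mu(\eta_2-\eta_1)$) and extra initial customers (Poisson$(\eta_2-\eta_1)$), then show $Y_1(t)\subseteq Y_2(t)$ by induction on the event sequence. The paper instead couples the larger system into the smaller one by \emph{thinning}: with $\widehat\lambda<\lambda$ and $r=\widehat\lambda/\lambda$, each arrival of the $\lambda$-system is retained for the $\widehat\lambda$-system independently with probability $r$. Both couplings deliver the pathwise domination and a positive-probability strictness event; yours is perhaps more transparent for this particular lemma.

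\textbf{Framing: excursion versus busy cycle.} The more substantive difference is that the paper never passes through the identity $g'(0,\eta)=N(\eta)$. It works directly with the busy-cycle ratio $G(h,\lambda)=\mathbf E D_I/\mathbf E D$ for small $h$, conditions on the event that exactly one of the $D$ arrivals in the cycle is infected, and observes that under thinning $\mathbf E\widehat D=r\,\mathbf E D$ while $\mathbf E\widehat D_I\le r\,\mathbf E D_I+o(h)$, since $\widehat D_I\le D_I$ pathwise when the single infected arrival is retained and $\widehat D_I=0$ otherwise. Dividing by $h$ and letting $h\to 0$ gives $g'(0,\widehat\eta)\le g'(0,\eta)$ directly. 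This bypasses precisely the two obstacles you flag: the Campbell-type control of overlapping excursions and the uniform integrability of $\mathbf E[Y]/p$ are never needed, because the busy-cycle representation of $p_o$ (already established in the proof of Lemma~\ref{lem:pop}) gives an exact finite-$h$ identity rather than an asymptotic one. Your finiteness concern about $N(\eta)$ is likewise absorbed, since $D_I\le D$ and $\mathbf E D<\infty$.

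In short: your additive-coupling excursion argument is correct but leaves real work in justifying $g'(0,\eta)=N(\eta)$; the paper's thinning-within-a-busy-cycle argument trades intuitive appeal for a proof that is complete as written.
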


\begin{Cor}\label{Cor10}
There exists a function $\eta_c^{(s)}(\alpha,\beta,\mu)=\eta_c(\alpha,\beta,\mu) \in [0,\infty)\cup \{\infty\}$ such that
\begin{itemize}
\item{} $g'(0,\eta)\le 1$ $\Leftrightarrow$
$p=0$ is the only solution to \eqref{fixedpoint} $\Leftrightarrow$ $\eta \le \eta_c$;
\item{} $g'(0,\eta)>1$ $\Leftrightarrow$
there are two solutions to Equation \eqref{fixedpoint}, $p=0$ and $p^*>0$, $\Leftrightarrow$
$\eta >\eta_c$.
\end{itemize}
\end{Cor}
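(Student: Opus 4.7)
The plan is to define $\eta_c$ as the threshold at which the function $\eta\mapsto g'(0,\eta)$ crosses~$1$, and then to derive the two stated chains of equivalences from Lemma~\ref{lem8} (strict monotonicity in $\eta$) and Lemma~\ref{lem:unic} (characterization of positive fixed points via $g'(0)$).

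Concretely, set
\[
\eta_c(\alpha,\beta,\mu) \;:=\; \sup\bigl\{\eta\ge 0 : g'(0,\eta)\le 1\bigr\},
\]
with the conventions $\sup\emptyset = 0$ and $\sup[0,\infty)=\infty$. The first equivalence in each bullet, relating $g'(0,\eta)$ to the existence of a positive solution of \eqref{fixedpoint}, is a direct restatement of \eqref{p*exists} in Lemma~\ref{lem:unic}: the negation of ``$g'(0,\eta)>1$'' is ``$p=0$ is the only solution''. So the work is entirely in the second equivalence, relating $g'(0,\eta)\lessgtr 1$ to $\eta\lessgtr\eta_c$.

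For this, I would argue as follows. If $\eta>\eta_c$, then by the definition of the supremum there is some $\eta'\in(\eta_c,\eta)$ with $g'(0,\eta')>1$, and Lemma~\ref{lem8} (strict monotonicity) gives $g'(0,\eta)>g'(0,\eta')>1$. Conversely, if $\eta<\eta_c$, then there is $\eta''\in(\eta,\eta_c]$ with $g'(0,\eta'')\le 1$, and again Lemma~\ref{lem8} gives $g'(0,\eta)<g'(0,\eta'')\le 1$. The remaining case $\eta=\eta_c$ requires verifying that $g'(0,\eta_c)\le 1$, i.e.\ that the threshold is achieved on the ``extinction'' side. This I would deduce from continuity of $\eta\mapsto g'(0,\eta)$: for any sequence $\eta_n\uparrow\eta_c$ with $g'(0,\eta_n)\le 1$ (such a sequence exists by definition of $\eta_c$), passing to the limit gives $g'(0,\eta_c)\le 1$. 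Continuity of $g'(0,\cdot)$ should follow from the continuous dependence of the stationary distribution of the open SIS reactor on $\eta$ (via the generating-function representation of $g$ coming from the PDE \eqref{eq:sispde} and dominated convergence), since all relevant moments are uniformly dominated by those of the Poisson$(\eta)$ total population.

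The main obstacle, as suggested above, is the boundary behaviour at $\eta=\eta_c$: strict monotonicity of $g'(0,\cdot)$ alone is not enough to prevent a jump across the value~$1$, so one genuinely needs either left-continuity of $g'(0,\cdot)$ or an \emph{ad hoc} argument (e.g.\ showing directly that $p=0$ remains the only solution when $g'(0,\eta_c)=1$, using strict concavity from Lemma~\ref{lem:pop}, so that the chord $g(p)=p$ lies strictly below the tangent at $0$ for $p>0$). Once this boundary point is handled, the rest of the proof is a routine packaging of Lemmas~\ref{lem:unic} and~\ref{lem8}.
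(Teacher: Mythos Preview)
Your proposal is correct and follows essentially the same approach as the paper's one-line proof (``These results follow directly from the previous lemmas''), namely combining Lemma~\ref{lem:unic} with the strict monotonicity of Lemma~\ref{lem8}. You are in fact more careful than the paper in flagging the boundary case $\eta=\eta_c$, which the paper does not explicitly address; your suggested fix via left-continuity of $\eta\mapsto g'(0,\eta)$ is the natural way to close this minor gap.
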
 

\begin{proof}
These results follow directly from the previous lemmas.
\end{proof}

\subsection{Bounds}
\label{sec28}

In this subsection, we provide sufficient conditions for the existence and
the non-existence of a solution $p^*>0$ to
the equation $p_o(p)=p$. We also show that $\eta_c^{(s)}$ is finite and strictly positive.

\begin{Lem}\label{Lem11} \label{lem:out_bigger_in_small_p_large_lambda}
Fix the set of parameters $\alpha$, $\beta$, and $\mu$. For $\eta$ (or equivalently $\lambda$) large enough,
there exists $p_1 > 0$ such that if $0<p < p_1$, then the fraction of infected customers
of the output is larger than that of the input, namely $p_{o} > p$.
\end{Lem}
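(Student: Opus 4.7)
By Lemmas~\ref{lem:pop} and~\ref{lem:unic}, the map $p\mapsto g(p,\eta)$ is concave on $[0,1]$ with $g(0,\eta)=0$, and it is differentiable at $0$ from the right. Concavity together with $g(0,\eta)=0$ forces the ratio $p\mapsto g(p,\eta)/p$ to be non-increasing on $(0,1]$, so
\[
\lim_{p\to 0^+}\frac{g(p,\eta)}{p}\;=\;g'(0,\eta),
\]
and the conclusion of the lemma is therefore equivalent to $g'(0,\eta)>1$. By Lemma~\ref{lem8}, $\eta\mapsto g'(0,\eta)$ is strictly increasing, so it is enough to exhibit a single value $\eta_0$ at which $g'(0,\eta_0)>1$; every $\eta>\eta_0$ then inherits the property, which is exactly what the lemma asks for.

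To produce such an $\eta_0$, I would invoke the first-order identity of Lemma~\ref{lem-for}, $\lambda p+\alpha\E[XY]=(\mu+\beta)\E[Y]$, rewritten as
\[
g(p,\eta)\;=\;\frac{\mu}{\mu+\beta}\left(p+\frac{\alpha\E[XY]}{\lambda}\right),
\qquad
\frac{g(p,\eta)}{p}\;=\;\frac{\mu}{\mu+\beta}\left(1+\frac{\alpha}{\lambda}\cdot\frac{\E[XY]}{p}\right),
\]
so the remaining task is to identify $\lim_{p\to 0^+}\E[XY]/p$. Heuristically, as $p\to 0^+$ the infected arrivals become rare; by PASTA a tagged arriving infected sees the $p=0$ stationary law of the susceptibles, which is Poisson with mean $\eta$, and throughout its exponential sojourn (at rate $\mu+\beta$) the susceptible environment stays close to that law. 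This suggests $\lim_{p\to 0^+}\E[XY]/\E[Y]=\eta$, and combined with Lemma~\ref{lem-for} one gets $\lim_{p\to 0^+}\E[Y]/p=\lambda/(\mu+\beta-\alpha\eta)$ in the range $\alpha\eta<\mu+\beta$, whence
\[
g'(0,\eta)\;=\;\frac{\mu}{\mu+\beta-\alpha\eta},
\]
which exceeds $1$ precisely when $\alpha\eta>\beta$. Any $\eta_0\in(\beta/\alpha,(\mu+\beta)/\alpha)$ is then a valid witness.

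The hard step is the rigorous derivation of the limit $\E[XY]/\E[Y]\to\eta$: one cannot appeal to independence of $X$ and $Y$, and the paper explicitly flags their anti-association as an open conjecture. I would handle it by coupling the SIS reactor at parameter $p>0$ with its $p=0$ counterpart, namely the autonomous M/M/$\infty$ queue of susceptibles with arrival rate $\lambda$ and service rate $\mu$: on any bounded horizon the two systems differ only through an $O(p)$ number of infection chains triggered by the excess infected arrivals, each chain having $O(1)$ lifetime and $O(1)$ footprint on the susceptible pool, so the correction to $\E[XY]$ is $O(p)\cdot\eta$ to leading order in $p$. Since the statement only requires some sufficiently large $\eta$, a cleaner alternative is to prove merely the one-sided bound $\E[XY]\ge(\eta-\delta(p))\E[Y]$ with $\delta(p)\to 0$ as $p\to 0^+$, which is already enough to push $g'(0,\eta)$ above $1$ on the interval $(\beta/\alpha,(\mu+\beta)/\alpha)$ and thus to close the argument.
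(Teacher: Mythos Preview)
Your reduction to showing $g'(0,\eta)>1$ via concavity is perfectly legitimate and is a different route from the paper's proof, which instead produces an explicit elementary lower bound on $p_o$ by tracking two concrete scenarios (an infected arrival that departs before recovery, and a susceptible arrival that is later infected by a subsequent infected arrival), yielding
\[
p_o \;\ge\; p\,\frac{\mu}{\mu+\beta}\;+\;(1-p)\,\frac{\lambda p}{\lambda p+\mu}\,\kappa,
\qquad
\kappa=\frac{2\alpha\mu}{2(\mu+\beta)(\alpha+2\mu+\beta)-\alpha\beta},
\]
and then checks directly that the right-hand side exceeds $p$ for small $p$ once $\lambda\kappa>\mu\beta/(\mu+\beta)$. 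This is self-contained and does not invoke Lemmas~\ref{lem:pop}, \ref{lem:unic}, \ref{lem8} at all.

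Your argument, however, has a genuine gap at the ``hard step'', and the gap is not merely technical: the heuristic conclusion $\lim_{p\to 0^+}\E[XY]/\E[Y]=\eta$ is \emph{false} in general. If it were true, your computation would give $g'(0,\eta)=\mu/(\mu+\beta-\alpha\eta)$ and hence $\eta_c^{(s)}=\beta/\alpha$ exactly, independent of $\mu$. This contradicts the paper's own numerical phase diagrams (Section~\ref{ss:num}), which show $\eta_c^{(s)}$ strictly above $\beta/\alpha$ and varying with $\mu$; indeed $\eta_c^{(s)}\ge\beta/\alpha$ is stated only as a consequence of the unproved negative-correlation conjecture (Lemma~\ref{lem:whunc}). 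The reason the heuristic fails is that as $p\to 0$ the set $\{Y>0\}$ consists of rare infection clusters triggered by isolated infected arrivals, and \emph{within} each such cluster the susceptible pool $X$ is depleted by secondary infections; the $Y$-weighted average of $X$ therefore converges to a constant strictly below $\eta$, not to $\eta$. Your proposed one-sided bound $\E[XY]\ge(\eta-\delta(p))\E[Y]$ goes in the wrong direction for the same reason: coupling with the $p=0$ system gives $X$ \emph{smaller} when infections are present, not larger. To salvage the approach you would need a lower bound on $\E[XY]/p$ that accounts for the in-cluster depletion --- which is essentially what the paper's direct event-counting accomplishes without ever passing through the derivative.
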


\begin{proof} Note that an infected individual is guaranteed to leave infected if it
departs before it recovers. A susceptible individual (let us refer to it as target individual in what follows
in this paragraph) is guaranteed to leave as infected if an infected individual (we will
refer to it is such in what follows in this paragraph) arrives before the susceptible one departs,
then the infected individual infects the target one before the departure of either of them,
and the recovery of the infected one, and then: either the target individual departs before its
own recovery, or it recovers before the departure of either one and recovery of the infected
one but gets infected by the infected one again, and so on. Thus,
\begin{align}\label{aaa}
p_{o} \ge p \frac{\mu}{\mu+\beta} + (1-p) \frac{\lambda p}{\lambda p + \mu} \kappa,
\end{align}
where

\begin{align}\label{kappa}
\kappa = \frac{\alpha}{\alpha+2 \mu + \beta} \left(\frac{\mu}{\mu+\beta} + \frac{\beta}{2(\mu+\beta)}a\right)
 = \frac{2 \alpha \mu}{2(\mu+\beta)(\alpha+2\mu + \beta) - \alpha \beta}.
\end{align}
Note that $p_o>p$ follows if the RHS of \eqref{aaa} is strictly bigger than $p$, which is equivalent to
\begin{align*}
\frac{\mu}{\mu+\beta} + (1-p) \cdot \frac{\lambda \kappa}{\lambda p+\mu} >1,
\end{align*}
or
\begin{align*}
(1-p)\frac{\lambda \kappa}{\lambda p+\mu} > \frac{\beta}{\mu+\beta}.
\end{align*}

Assume that, for some $C>1$, 
\begin{equation} \label{eq:large_lambda_estimate}
\lambda \kappa > \frac{C\mu \beta}{\mu+\beta},
\end{equation}
then $p_o > p$ for
$$
p < p_1 \equiv \frac{C-1}{\lambda/\mu +C},
$$
so that there exists a $p^*>0$ solving $p_o(p)=p$. 
\end{proof}

\begin{Cor}\label{Cor100}
It follows from Lemma \ref{Lem11} that for all $\alpha,\beta,\mu$,
the value of $\eta_c^{(s)}$ in Corollary \ref{Cor10} is finite, and more precisely
\begin{equation}
\label{eq:lub}
\eta_c^{(s)} \le \frac 1 \kappa \frac{\beta}{\mu+\beta},
\end{equation}
with $\kappa$ defined in (\ref{kappa}).
\end{Cor}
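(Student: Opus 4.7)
The plan is to extract from the proof of Lemma \ref{Lem11} an explicit sufficient condition on $\eta$ for survival, and then translate that condition into an upper bound on $\eta_c^{(s)}$ via Corollary \ref{Cor10}. Inside that proof, the key estimate \eqref{eq:large_lambda_estimate} reads $\lambda\kappa > C\mu\beta/(\mu+\beta)$ for some $C>1$; substituting $\lambda=\mu\eta$ and dividing by $\mu\kappa$, it becomes $\eta > C\beta/[\kappa(\mu+\beta)]$. Any $\eta$ strictly greater than $\beta/[\kappa(\mu+\beta)]$ admits such a $C>1$, and so inherits the survival conclusion of Lemma \ref{Lem11}. Letting $\eta$ descend to this threshold and appealing to Corollary \ref{Cor10} yields $\eta_c^{(s)}\le \beta/[\kappa(\mu+\beta)]$, which is exactly \eqref{eq:lub}; finiteness follows since $\kappa>0$ is apparent from its explicit form \eqref{kappa}.

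The one bookkeeping point that requires care is the bridge from what Lemma \ref{Lem11} literally produces, namely $p_o(p)>p$ on a right neighborhood of $0$, to the hypothesis $g'(0,\eta)>1$ demanded by Corollary \ref{Cor10}. Here I would invoke Lemma \ref{lem:pop}: since $g(\cdot,\eta)$ is differentiable at $0$ with $g(0,\eta)=0$, the local dominance $g(p,\eta)>p$ gives $g'(0,\eta)\ge 1$ on letting $p\downarrow 0$. Strict concavity then rules out equality, for $g'(0,\eta)=1$ would place $g(\cdot,\eta)$ strictly below its tangent line at $0$, i.e.\ $g(p,\eta)<p$ for every $p>0$, contradicting Lemma \ref{Lem11}. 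With $g'(0,\eta)>1$ established, Corollary \ref{Cor10} delivers $\eta>\eta_c^{(s)}$, as required.

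The argument is essentially a repackaging of previously established facts, and I do not anticipate any serious obstacle. The only delicate step is the concavity argument above, which upgrades the qualitative survival conclusion of Lemma \ref{Lem11} into the strict derivative inequality that Corollary \ref{Cor10} requires; without this step one would only get $\eta_c^{(s)}\le \beta/[\kappa(\mu+\beta)]$ with the inequality left ambiguous at the boundary.
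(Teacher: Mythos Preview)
Your proposal is correct and follows the same route the paper intends; the paper gives no explicit proof for this corollary beyond ``It follows from Lemma \ref{Lem11}'', and you have accurately unpacked the implicit argument. One small simplification: the concavity bridge in your second paragraph, while valid, is not strictly needed, since the proof of Lemma \ref{Lem11} already concludes with the existence of a positive fixed point $p^*$, and Corollary \ref{Cor10} states directly that this existence is equivalent to $\eta>\eta_c^{(s)}$; so one can pass from \eqref{eq:large_lambda_estimate} to $\eta>\eta_c^{(s)}$ without going through $g'(0,\eta)>1$.
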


Here is another observation:

\begin{Lem} \label{lem:out_smaller_in_any_p_small_lambda}
Fix the set of parameters $\alpha$, $\beta$, and $\mu$. For $\eta$ (or equivalently $\lambda$) small enough,
$p_{o} < p$ for all $p$.
\end{Lem}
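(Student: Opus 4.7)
The plan is to reduce the statement, via the strict concavity of $g(\cdot, \eta)$ (Lemma \ref{lem:pop}), to showing that $g'(0^+, \eta) \le 1$ for $\eta$ small enough. Indeed, strict concavity together with $g(0, \eta) = 0$ implies $g(p, \eta) < g'(0^+, \eta)\cdot p$ for every $p \in (0, 1]$, so the claim follows once one establishes that $g'(0^+, \eta) < 1$ for all sufficiently small $\eta > 0$. This is the natural complement to Corollary \ref{Cor100} and shows in addition that $\eta_c^{(s)}>0$.

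To compute $g'(0^+, \eta)$, combine $p_o = \mu\,\E[Y]/\lambda$ with the first-order conservation relation \eqref{eq2} to get
\begin{equation*}
g(p, \eta) \;=\; \frac{\mu}{\mu+\beta}\, p \;+\; \frac{\alpha\mu}{\lambda(\mu+\beta)}\,\E[XY].
\end{equation*}
At $p = 0$ the reactor is an autonomous M/M/$\infty$ queue of susceptibles, so $X \sim \Po(\eta)$ and $Y \equiv 0$. Linearizing the dynamics of $Y$ around this state, the infected particles form a subcritical continuous-time branching process on a Poisson$(\eta)$ background of susceptibles, with per-particle offspring rate $\alpha\eta$ (infections of the background susceptibles) and per-particle death rate $\mu+\beta$ (departure or recovery). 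In the subcritical regime $\alpha\eta < \mu+\beta$, the mean cluster size per infected arrival equals $(\mu+\beta)/(\mu+\beta - \alpha\eta)$, and Little's law then yields $\partial \E[Y]/\partial p|_{p=0^+} = \lambda/(\mu+\beta-\alpha\eta)$, and consequently
\begin{equation*}
g'(0^+, \eta) \;=\; \frac{\mu}{\mu+\beta - \alpha\eta},
\end{equation*}
which is strictly less than $1$ whenever $\eta < \beta/\alpha$. Plugging back into the concavity reduction, one obtains $p_o(p) < p$ for all $p \in (0, 1]$, and as a by-product the explicit bound $\eta_c^{(s)} \ge \beta/\alpha$.

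The main obstacle is the rigorous justification of the linearization, namely the identity $\E[XY] = \eta\,\E[Y] + o(p)$ as $p \to 0^+$: this is a decorrelation statement between $X$ and $Y$ in the limit of sparse infected arrivals and cannot be read off from the raw bound $X \le X+Y \sim \Po(\eta)$, which yields only the $p$-uniform estimate $\E[XY] \le (\eta+\eta^2)/4$ and so misses the required linear dependence on $p$. A clean route is a coupling argument: tag each infected particle by the external arrival that initiated its cluster, and couple each such cluster with an independent subcritical birth-death process in an idealized Poisson$(\eta)$ environment of susceptibles, controlling the error from cluster overlaps and from the random depletion of susceptibles by $O(p^2)$ terms that vanish upon taking the derivative at $p = 0$. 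Alternatively, expanding $\Phi(x,y)$ in powers of $p$ via the PDE \eqref{eq:sispde}, one finds $\Phi_0(x) = e^{-\eta(1-x)}$ at zeroth order and a linear first-order PDE for the correction $\Phi_1(x,y)$, from which $g'(0^+, \eta)$ is read off directly.
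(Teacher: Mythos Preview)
Your reduction via the strict concavity of $g$ (Lemma \ref{lem:pop}) to the condition $g'(0^+,\eta)\le 1$ is correct and is essentially the content of Lemma \ref{lem:unic} and Corollary \ref{Cor10}. The paper, however, does not prove the present lemma through this reduction: it bounds $1-p_o$ from below directly, via two explicit scenarios in which a tagged customer is guaranteed to depart susceptible, and shows the resulting lower bound exceeds $1-p$ for all $p$ once $\lambda\le\varepsilon\mu$ with $\varepsilon$ small. This yields the unconditional bound $\eta_c^{(s)}\ge \beta/(2\mu+5\beta)$ of Corollary \ref{Cor101}.

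The substantive gap in your proposal is the formula $g'(0^+,\eta)=\mu/(\mu+\beta-\alpha\eta)$, which is not merely hard to justify but incorrect. Your branching heuristic replaces the actual per-infected infection rate $\alpha X$ by the constant $\alpha\eta$; this is exactly the AIR approximation, and the resulting threshold $\beta/\alpha$ is the AIR threshold $\eta_c^{(a)}$ of (\ref{eq:lsa}), not $\eta_c^{(s)}$. For the plain SIS reactor the decorrelation $\E[XY]=\eta\,\E[Y]+o(p)$ fails: during an infection cluster the susceptible count $X$ is genuinely depleted by infections and relaxes back only on the $1/\mu$ timescale, so the ratio $\E[XY]/\E[Y]$ converges, as $p\to 0$, to a quantity determined by the full cluster dynamics rather than to $\eta$. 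A quick sanity check: for large $\alpha$ the first infected arrival rapidly converts all susceptibles present, driving $X$ near zero while $Y>0$, so $\E[XY]/\E[Y]\ll\eta$. More decisively, your formula would force $\eta_c^{(s)}=\beta/\alpha$ exactly, whereas the paper obtains $\eta_c^{(s)}\ge\beta/\alpha$ only under the unproved negative-correlation conjecture (Lemma \ref{lem:whunc} in Section \ref{sec4}); and the numerical plots of $g'(0)$ versus $\mu$ in Figure \ref{fig:derivative_vs_mu} are incompatible with the closed form $\mu/(\mu+\beta-\alpha\eta)$ (for $\eta=\alpha=\beta=1$ your formula gives $g'(0)\equiv 1$ independently of $\mu$, which the left panel visibly contradicts). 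The PDE expansion you suggest does not close either: the order-$p$ equation for $\Phi_1$ merely reproduces the $p$-derivative of (\ref{eq2}), relating $\partial_p\E[Y]$ to $\partial_p\E[XY]$ without supplying a second relation, so $g'(0)$ cannot be read off.

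If you want to keep the concavity reduction, a route that does work is to combine it with Lemma \ref{lem8}: since $\eta\mapsto g'(0,\eta)$ is strictly increasing and a short busy-cycle argument gives $g'(0,\eta)\to\mu/(\mu+\beta)<1$ as $\eta\downarrow 0$, one gets $g'(0,\eta)<1$ for all small $\eta$. But that is a different argument from the one you sketched, and it does not deliver the explicit bound $\eta_c^{(s)}\ge\beta/\alpha$.
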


\begin{proof} An individual is guaranteed to leave the system as susceptible in one of two scenarios. In the first scenario, it arrives as susceptible, finds no infected individuals in the system upon its arrival and leaves before the arrival of an infected individual. The probability of this is
$$
(1-p) \P(Y=0) \frac{\mu}{\mu+\lambda p}.
$$
The above can be bounded from below by
\begin{align*}
& (1-p)(1-\E(Y)) \left(1-\frac{\lambda p}{\mu+\lambda p}\right) = (1-p)\left(1-p_{o} \frac{\lambda}{\mu}\right)\left(1-p\frac{\lambda}{\mu}\right)
\\ & \ge \left(1-p\left(1+\frac{\lambda}{\mu}\right)\right)\left(1-p_{o} \frac{\lambda}{\mu}\right) \ge 1-p\left(1+\frac{\lambda}{\mu}\right) -p_{o} \frac{\lambda}{\mu}.
\end{align*}
Finally, for a fixed $\mu$ and a small $\varepsilon>0$, one can take $\lambda \le \varepsilon \mu$ so that the above is bounded from below by
$$
1-p(1+\varepsilon) - \varepsilon p_{o}.
$$
In the second scenario, an individual arrives infected, finds no other individual in the system, recovers before any other individual arrives and then leaves before any infected individual arrives. The probability of this is
$$
p e^{-\lambda/\mu} \frac{\beta}{\beta+\lambda+\mu}\frac{\mu}{\mu+\lambda p} \ge p \left(1-\frac{\lambda}{\mu}\right)\frac{\beta}{\beta+\lambda+\mu}\left(1-\frac{\lambda p}{\mu}\right) \ge p \left(1-\frac{\lambda}{\mu}\right)^2\frac{\beta}{\beta+\lambda+\mu}.
$$
With the choice of small $\lambda$ already made, the above is bounded from below by
$$
p(1-\varepsilon)^2 \frac{\beta}{\beta+\mu+\varepsilon\mu} \ge p(1-\varepsilon)^2 \frac{\beta}{\beta+\mu} \left(1-\varepsilon \frac{\mu}{\beta+\mu}\right) \ge p c (1-3 \varepsilon),
$$
where $c = \beta/(\beta+\mu)$.
Combining the two scenarios, we conclude
$$
1-p_{o} \ge 1-p(1+\varepsilon) - \varepsilon p_{o} + p c (1-3\varepsilon),
$$
or equivalently
$$
p_{o} \le  p \frac{1-c+3 c\varepsilon}{1-\varepsilon} < p
$$
as long as $\varepsilon < c/(2+3c)$.
\end{proof}

From the proof of Lemma \ref{lem:out_smaller_in_any_p_small_lambda}, one gets the following bound. 
\begin{Cor}\label{Cor101}
For all $\alpha,\beta,\mu$,
the value of $\eta_c^{(s)}$ in Corollary \ref{Cor10} is strictly positive, and more precisely
\begin{equation}
\label{eq:lubi2}
\eta_c^{(s)} \ge \frac{\beta}{2\mu+5\beta}.
\end{equation}
\end{Cor}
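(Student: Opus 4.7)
The plan is simply to read off the explicit constant from the chain of estimates already carried out in the proof of Lemma \ref{lem:out_smaller_in_any_p_small_lambda}. That proof shows that, for any $\varepsilon > 0$ and any $\lambda \le \varepsilon \mu$, one has
$$p_o \le p \cdot \frac{1 - c + 3c\varepsilon}{1 - \varepsilon}$$
for every $p \in (0,1]$, where $c = \beta/(\beta+\mu)$. The strict bound $p_o < p$ then holds as soon as $(1 - c + 3c\varepsilon)/(1-\varepsilon) < 1$, which is elementary algebra and reduces to $\varepsilon < c/(2+3c)$.

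Next I would invoke Corollary \ref{Cor10}: if $\eta = \lambda/\mu < c/(2+3c)$, then the argument above gives $g(p,\eta) = p_o < p$ for every $p \in (0,1]$, so the only root in $[0,1]$ of the fixed-point equation $p = g(p,\eta)$ is $p = 0$. By the equivalences in Corollary \ref{Cor10}, this is exactly the condition $\eta \le \eta_c^{(s)}$. Passing to the supremum over all admissible $\eta$,
$$\eta_c^{(s)} \ge \frac{c}{2+3c} = \frac{\beta/(\beta+\mu)}{2 + 3\beta/(\beta+\mu)} = \frac{\beta}{2(\beta+\mu) + 3\beta} = \frac{\beta}{2\mu + 5\beta},$$
which is precisely the claimed bound (and in particular strictly positive for all positive $\alpha,\beta,\mu$).

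There is no real obstacle here: the corollary is essentially a bookkeeping step, extracting the tightest threshold for which the elementary coupling-style estimates of Lemma \ref{lem:out_smaller_in_any_p_small_lambda} still close. The only thing worth verifying en route is that every quantity bounded from below in that lemma (such as $1-\lambda/\mu$, $1-\lambda p/\mu$, and $\P(Y=0) \ge 1 - \E[Y]$) remains non-negative under the scaling $\lambda \le \varepsilon \mu$ with $\varepsilon < 1$, which is automatic since $c/(2+3c) < 1$.
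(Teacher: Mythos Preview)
Your proposal is correct and follows essentially the same route as the paper: the paper simply states that the bound follows from the proof of Lemma~\ref{lem:out_smaller_in_any_p_small_lambda}, and you have unpacked that reference by substituting $c=\beta/(\beta+\mu)$ into the threshold $\varepsilon < c/(2+3c)$ and simplifying. Your closing remark about non-negativity of the intermediate factors under $\varepsilon<1$ is a useful sanity check that the paper leaves implicit.
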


\begin{Rem}
Note that the last lower bound does not depend on $\alpha$.  
This may look surprising at first glance.
The fact that this bound holds even for $\alpha =\infty$ can be explained as follows.
For any fixed $\eta >0$, in the $M/M\infty$ queue, there are busy cycles with only one customer.
The smaller is $\eta$, the closer to 1 is the probability $\mu/(\lambda+\mu)$ for a typical
busy cycle to have only one customer to be served.
Call such a cycle a 1-cycle. If a customer enters the queue as
infected and if it is served in a 1-cycle, it has a chance
close to $\beta/(\beta+\mu)$ to recover before leaving the queue; and if it susceptible,
it leaves the queue susceptible after service in a 1-cycle queue.
So $p_0>p$ uniformly in all $\alpha$'s, for all $\eta$ sufficiently small.
\end{Rem}

\subsection{Survival vs. Extinction}
The limiting system has 3 parameters only. Indeed, one can always take, say, $\beta=1$ by a 
time rescaling and only the three other parameters remain.
Consider the parameterization $(\alpha,\mu,\eta)$ and the associated positive orthant.

\subsubsection{Phase diagram in $\eta$}
The two following results rely on the definitions of extinction
and survival given in Definition \ref{def:se}.
They show that the SIS thermodynamic limit admits simple phase diagram
w.r.t. $\eta$, with critical value equal to $\eta_c^{(s)}$ defined above.
\begin{Thm}
\label{thm:surv}
If the SIS thermodynamic propagation of chaos ansatz holds, then,
in the SIS thermodynamic limit, there exists a constant $\eta_c$
such that there is survival if $\eta > \eta_c^{(s)}$.
\end{Thm}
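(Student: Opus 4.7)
The plan is to exploit Corollary \ref{Cor10}, which for $\eta > \eta_c^{(s)}$ furnishes a $p^* \in (0,1)$ satisfying $g(p^*, \eta) = p^*$, and to turn this fixed point of the single-reactor recursion into a stationary regime of the thermodynamic limit with a positive fraction of infected customers. The bridge between the two is the thermodynamic propagation of chaos ansatz: in the closed limit, each station evolves as an independent SIS reactor whose external arrivals form independent Poisson processes of infected and susceptible customers.

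First I would translate the closed-system constraint into a consistency equation on the infected proportion. By symmetry and the uniform routing among the $N$ stations, the arrival stream of infected at a typical station in the limit must be balanced by the outgoing stream: if $p$ denotes the stationary fraction of infected customers, the external infected arrival rate at a typical station equals $\lambda p$, while under the ansatz the corresponding output rate is $\mu \mathbf{E}_p[Y] = \lambda g(p,\eta)$. The stationarity constraint is therefore exactly $p = g(p,\eta)$, as anticipated in Subsection \ref{sec3}.

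Second, for $\eta > \eta_c^{(s)}$, pick the solution $p^* \in (0,1)$ supplied by Corollary \ref{Cor10}, and take as candidate invariant law the product, indexed by stations, of the stationary laws of the open SIS reactor driven by independent Poisson inputs with rates $\lambda(1-p^*)$ and $\lambda p^*$. Under the ansatz this product measure is invariant for the thermodynamic-limit dynamics: each station's marginal is the corresponding open reactor, and by the fixed-point identity the aggregate per-station output of infected, $\mu \mathbf{E}_{p^*}[Y] = \lambda p^*$, exactly replenishes the infected arrivals at any other station through the uniform routing (analogously for susceptibles). Since the fraction of infected customers under this law equals $p^* \in (0,1)$, this is a stationary regime exhibiting survival in the sense of Definition \ref{def:se}.

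The main obstacle, and the reason for the ansatz, lies in the last step: one has to justify that pooling and uniformly redistributing the independent departure streams of infinitely many SIS reactors produces independent Poisson input streams of the right intensity at each receiving station. This Poissonian closure of the closed routing is precisely what the propagation of chaos ansatz abstracts away, so under the standing assumption the argument reduces to the algebraic first-order check that $g(p^*) = p^*$ is equivalent to the rate-balance relation $\lambda p^* = \mu \mathbf{E}[Y]$ (see Lemma \ref{lem-for}), which is exactly what selecting $p^*$ via Corollary \ref{Cor10} guarantees. Note that the argument establishes only the existence of a survival regime; it does not address uniqueness (the all-susceptible regime is always invariant) nor convergence from general initial data, both of which fall outside the statement.
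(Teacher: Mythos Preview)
Your proposal is correct and follows essentially the same approach as the paper: use Corollary \ref{Cor10} to obtain $p^*\in(0,1)$ with $g(p^*)=p^*$, then take the stationary law of the open SIS reactor with infected input fraction $p^*$ as the invariant distribution of the thermodynamic limit under the ansatz. Your write-up is more explicit about the product structure and the rate-matching mechanism, but the argument is the same.
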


\begin{proof}
From Corollary \ref{Cor10}, if $\eta > \eta_c^{(s)}$, there exists $p^*>0$ such that
$g(p^*)=p^*$. If the ansatz holds, take as initial distribution in the non-homogeneous
Markov representation of the SIS thermodynamic limit the law of the open SIS reactor with
infected input rate $p=p^*$. This distribution is a stationary distribution of this system.
\end{proof}

\begin{Thm}
\label{thm:ext}
If the SIS thermodynamic propagation of chaos ansatz holds, then,
in the SIS thermodynamic limit, there is strong extinction if $\eta \le \eta_c^{(s)}$.
\end{Thm}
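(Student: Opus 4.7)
The plan is to exploit the propagation-of-chaos ansatz to reduce the infinite-dimensional closed dynamics to a time-inhomogeneous single-station problem with feedback through the mean infected population, and then apply a monotone coupling with the stationary open SIS reactor. Under the ansatz, a typical station at time $t$ is an open SIS reactor fed by independent Poisson flows of infected and susceptible arrivals of respective rates $\mu\,\mathbf{E}[Y(t)]$ and $\mu(\eta-\mathbf{E}[Y(t)])$: the routing is uniform and the total density at each station is conserved equal to $\eta$. Writing $p(t):=\mathbf{E}[Y(t)]/\eta$ for the mean fraction of infected in the typical station, the self-consistent dynamics become those of an open SIS reactor with time-varying Bernoulli thinning parameter $p(\cdot)$.

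Next I would establish the strict sub-identity $g(p,\eta)<p$ for every $p\in(0,1]$ in the subcritical regime. By Lemma~\ref{lem:pop}, $g(\cdot,\eta)$ is strictly concave with $g(0,\eta)=0$; by Corollary~\ref{Cor10}, $g'(0,\eta)\le 1$ whenever $\eta\le \eta_c^{(s)}$. Strict concavity combined with $g(0,\eta)=0$ then yields
\begin{equation*}
g(p,\eta)< g'(0,\eta)\,p\le p, \qquad p\in(0,1].
\end{equation*}

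To conclude $p(t)\to 0$ as $t\to\infty$, I would use a monotonicity argument. Set $\bar p:=\limsup_{t\to\infty}p(t)$ and fix $\epsilon>0$; pick $t_0$ such that $p(s)\le \bar p+\epsilon$ for all $s\ge t_0$. By the stochastic monotonicity of the SIS reactor in its infected input proportion (a structural property already invoked to prove that $g$ is increasing in Lemma~\ref{lem:pop}), the true dynamics for $t\ge t_0$ can be stochastically dominated, in a pathwise coupling, by a stationary open reactor fed by the constant infected proportion $\bar p+\epsilon$, whose mean infected population equals $\eta\,g(\bar p+\epsilon,\eta)$. Passing to the $\limsup$, then letting $\epsilon\downarrow 0$ and using continuity of $g(\cdot,\eta)$, yields $\bar p\le g(\bar p,\eta)$, which combined with the previous display forces $\bar p=0$. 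Since the mean infected population per station tends to $0$ and the typical station law concentrates on states with no infected customers, strong extinction follows.

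The hard part will be the rigorous construction of the order-preserving coupling between the time-inhomogeneous dynamics and the stationary open reactor, since the bilinear infection rate $\alpha XY$ is monotone in both coordinates and a naive birth-death coupling need not preserve $Y_1\le Y_2$. A clean route is to build both dynamics on a common Poisson graphical representation, where each potential infection edge is driven by an independent Poisson clock of rate $\alpha$: the set of live edges is then monotone under the coordinatewise order on $(X,Y)$, so the coupling preserves the order. One must also justify uniform relaxation of the stationary open reactor to its equilibrium starting from any initial condition of bounded mean, which reduces to standard $M/M/\infty$-style ergodicity once $X+Y$ is controlled by the Poissonian total population.
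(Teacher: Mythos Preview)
Your argument is correct and rests on the same two pillars as the paper's proof: the strict inequality $g(p,\eta)<p$ for all $p\in(0,1]$ in the subcritical regime (from strict concavity and $g'(0,\eta)\le 1$), and the pathwise monotone coupling of the time-inhomogeneous reactor with a time-homogeneous one having larger infected input fraction (this is exactly the content of Remark~\ref{rem24}, and your graphical construction with per-pair Poisson clocks of rate $\alpha$ is precisely how the paper builds it in the proof of Lemma~\ref{lem:pop}).

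The packaging differs. The paper builds an explicit \emph{cascade} of upper processes: starting with $p^{(1)}=1$ and all customers infected, it waits a deterministic time until the mean infected fraction has relaxed to within a controlled margin of $g(p^{(1)})$, then resets everyone to infected and drops the input parameter to $p^{(2)}$ slightly above $g(p^{(1)})$, and iterates. This produces a single dominating process $\widehat Y(t)$ defined for all $t\ge 0$, with $p^{(n)}\downarrow p^*$. Your route is more compressed: you go straight to $\bar p:=\limsup p(t)$, dominate once by a reactor with input $\bar p+\epsilon$, pass to the limit, and conclude $\bar p\le g(\bar p)$. One small imprecision: you cannot pathwise couple with the \emph{stationary} reactor from time $t_0$, since there is no reason $Y(t_0)$ is dominated by the stationary law; what you actually need (and what your last paragraph acknowledges) is to couple with the time-homogeneous reactor started from the \emph{same} state at $t_0$ and then use ergodicity plus uniform integrability (via $Y\le N$ with $N$ Poisson$(\eta)$) to get $\mathbf{E}[\widehat Y(t)]\to \eta\,g(\bar p+\epsilon)$.

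The paper's iterative construction buys something extra: the single global upper process $\widehat Y(t)$ is reused in Corollary~\ref{Squeeze} to squeeze the supercritical dynamics between $Y^*(t)$ and $\widehat Y(t)$ and deduce convergence to the nontrivial fixed point from above. Your $\limsup$ argument is cleaner for the extinction theorem alone, but does not immediately yield that sandwich.
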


\begin{proof}
The proof is given in Subsection \ref{ss:pttext}.
\end{proof}

\begin{Rem}
Theorem \ref{thm:surv} only shows that there exists an initial distribution ${\cal P}^*$ of the SIS
thermodynamic limit such that makes this limiting system stationary.
A stronger result is proved in Corollary \ref{Squeeze} in the appendix:
for all initial distributions which are stochastically larger than ${\cal P}^*$,
the distribution of the SIS thermodynamic limit
converges to ${\cal P}^*$ as time tends to infinity.
\end{Rem}
\subsubsection{Phase diagrams in $\alpha$ and in $\beta$}

Consider the SIS reactor.
Fix all parameters except $\alpha$ (resp. $\beta$) and consider
$0<\alpha_1<\alpha_2$
(resp. $0<\beta_2<\beta_1$). It is shown in Appendix \ref{ss:mab}
that there exists a coupling of
the two associated models such that $X_1(t)+Y_1(t)=X_2(t)+Y_2(t)=:N(t)$ and
$Y_1(t)\le Y_2(t)$ a.s., for all $t\ge 0$, given that the two models start from the same initial condition
$X_1(0)=X_2(0)$ and $Y_1(0)=Y_2(0)$ a.s.

Consider now a finite closed network of SIS reactors with $N$ stations and $K_N$ customers.  
It is shown in Appendix \ref{ss:mab} that the same monotonicity properties hold.

By arguments similar to those used w.r.t. $\eta$, we have

\begin{Thm}
\label{thm:survab}
If the SIS thermodynamic propagation of chaos ansatz holds, then,
in the SIS thermodynamic limit, there exists a constant $\alpha_c^{(s)}$
(resp. $\beta_c^{(s)}$)
such that there is survival if $\alpha > \alpha_c^{(s)}$
(resp. $\beta < \beta_c^{(s)}$) and strong extinction otherwise.
\end{Thm}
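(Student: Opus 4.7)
The plan is to mirror the argument used for the parameter $\eta$, replacing the monotonicity ingredient (Lemma \ref{lem8}) by the corresponding monotonicity in $\alpha$ and $\beta$ provided by the coupling announced in Appendix \ref{ss:mab}. Fix $\mu,\beta,\eta$, view $p_o$ as a function $g(p,\alpha)$ of the input proportion and of the infection rate, and repeat the analysis leading to Corollary \ref{Cor10}; the parallel story for $\beta$ proceeds identically after reversing the direction of monotonicity.

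First I would establish the structural properties of $g(\cdot,\alpha)$. The proofs of Lemmas \ref{lem:pop} and \ref{lem:unic} rely only on the input--output structure of the open SIS reactor and go through verbatim, so for every fixed $\alpha>0$ the function $p\mapsto g(p,\alpha)$ is increasing, strictly concave, differentiable on $[0,1]$ with $g(0,\alpha)=0$ and $g(1,\alpha)<1$, and the fixed-point equation $g(p)=p$ has a positive root $p^*(\alpha)$ if and only if $g'(0,\alpha)>1$. Next I would prove that $\alpha\mapsto g'(0,\alpha)$ is (strictly) increasing. The coupling from Appendix \ref{ss:mab} gives, for $\alpha_1<\alpha_2$ and common initial condition and driving Poisson inputs, $X_1(t)+Y_1(t)=X_2(t)+Y_2(t)$ and $Y_1(t)\le Y_2(t)$ a.s.\ for all $t\ge 0$. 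Passing to the stationary regime yields $\mathbf{E}[Y](\alpha_1)\le \mathbf{E}[Y](\alpha_2)$, and therefore $g(p,\alpha_1)\le g(p,\alpha_2)$ for every $p\in[0,1]$. Combined with strict concavity and $g(0,\cdot)=0$, this forces $g'(0,\alpha_1)\le g'(0,\alpha_2)$; strictness is obtained by inspecting the rate-conservation relation \eqref{eq2}, which makes $\alpha\mapsto\mathbf{E}[XY]$ (and hence $\mathbf{E}[Y]$) strictly monotone. The coupling with $\beta_2<\beta_1$ yields the reverse monotonicity in $\beta$.

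Given monotonicity, one defines $\alpha_c^{(s)}:=\inf\{\alpha>0:g'(0,\alpha)>1\}\in[0,\infty]$ and obtains the dichotomy $g'(0,\alpha)>1$ for $\alpha>\alpha_c^{(s)}$, $g'(0,\alpha)\le 1$ for $\alpha\le\alpha_c^{(s)}$; similarly $\beta_c^{(s)}:=\sup\{\beta>0:g'(0,\beta)>1\}$. The survival half then transfers word-for-word from Theorem \ref{thm:surv}: for $\alpha>\alpha_c^{(s)}$ there exists $p^*>0$ with $g(p^*)=p^*$, and taking as initial condition in the non-homogeneous Markov representation of the SIS thermodynamic limit the stationary law of the SIS reactor with infected input rate $\lambda p^*$ produces a stationary regime with a positive fraction of infected customers. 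For $\alpha\le\alpha_c^{(s)}$, the only fixed point of $g$ is $p=0$, and the argument of Subsection \ref{ss:pttext} (invoked in the proof of Theorem \ref{thm:ext}) shows that, under the propagation of chaos ansatz, the infected fraction converges to $0$ from any initial condition, i.e., strong extinction holds. The $\beta$ statement is obtained identically.

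The main obstacle is the \emph{strict} monotonicity of $g'(0,\alpha)$ in $\alpha$ (and of $g'(0,\beta)$ in $\beta$). Weak monotonicity follows effortlessly from the coupling and the concavity of $g$, but strictness -- which is what rules out a pathological interval of parameters on which $g'(0,\cdot)$ stays constantly equal to $1$ and hence allows us to identify a single threshold -- requires either a more careful inspection of the coupling (showing that the event $\{Y_1(t)<Y_2(t)\}$ has positive probability and already contributes to the derivative of $g$ at the origin) or a direct computation from the PDE \eqref{eq:sispde} of the form used for Lemma \ref{lem8}. Everything else in the proof is a routine transcription of the $\eta$-case, with the sign of the monotonicity flipped in the $\beta$ setting.
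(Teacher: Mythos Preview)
Your proposal is correct and follows essentially the same approach as the paper, which simply writes ``by arguments similar to those used w.r.t.\ $\eta$'' and points to the coupling in Appendix~\ref{ss:mab}. You have spelled out those arguments faithfully: the structural properties of $g(\cdot,\alpha)$ from Lemmas~\ref{lem:pop}--\ref{lem:unic}, the monotonicity of $g'(0,\cdot)$ from the $\alpha$- and $\beta$-couplings, and the transfer of the survival/extinction dichotomy from Theorems~\ref{thm:surv} and~\ref{thm:ext}.

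Your caveat about \emph{strict} monotonicity of $\alpha\mapsto g'(0,\alpha)$ is well placed; the paper does not address it explicitly for $\alpha$ or $\beta$ either. Note, however, that your suggested route via the rate-conservation identity~\eqref{eq2} does not directly yield strictness (it relates $\E[Y]$ to $\alpha\E[XY]$ but both sides move with $\alpha$). The cleaner fix is the one you also mention: mimic the strictness argument in the proof of Lemma~\ref{lem8} by exhibiting, within the coupling of Appendix~\ref{ss:mab}, an explicit busy-cycle event of positive probability on which $Y_1<Y_2$ and which already contributes at order $h$ to $g(h,\alpha)$. The coupling there makes this straightforward, since the only difference between the two systems is that an $\alpha$-clock ringing produces an infection in system~2 but, with probability $1-\alpha_1/\alpha_2$, not in system~1.
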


\subsubsection{Other Phase diagrams}

Phase diagrams w.r.t. $\mu$ will only be discussed numerically in Section \ref{ss:num}.
An instance of question of interest is whether there is a simple (monotonic as above) phase diagram
w.r.t. $\mu$ when fixing $\alpha,\beta$ and $\eta$.

\subsection{First and Second Order Relations in the Thermodynamic Limit}
In the thermodynamic limit, 
the first order relation of Lemma \ref{lem-for} simplifies to
\begin{equation}\label{eq2therm}
\alpha \E[XY] = \beta \E[Y].
\end{equation}

Consider now the second order relations of Lemma \ref{lemsec}.
Equation (\ref{eqsecy}) can be rewritten as 
\begin{equation}
\label{eqsecytherm1}
(\eta\mu p + \mu + \beta ) p \eta + \alpha \E[ XY^2] =(\mu +\beta) \E[Y^2]
\end{equation}
or equivalently
\begin{equation}
\label{eqsecytherm1plus}
(\mu + \beta ) (\E[Y^2] -\E[Y]^2 -\E[Y])=
\alpha \E[XY] \E\left[Y \frac {\alpha XY}{\E[\alpha XY]}\right] -\beta\E[Y] =
\beta \E[Y](\E_I^0[Y^-]-\E[Y]).
\end{equation}
Similarly, (\ref{eqsecx}) can be rewritten as
\begin{equation}
\label{eqsecxtherm1}
(\eta\mu q + \mu) \eta q +(\alpha+\beta) \E[XY] =\alpha \E[ X^2Y] +\mu \E[X^2]
\end{equation}
or equivalently
\begin{eqnarray}
\label{eqsecxtherm1plus}
\mu \left(\E[X^2] -\E[X]^2-\E[X]\right) 
& = & (\alpha+\beta) \E[XY] -\alpha\E[XY] \E\left[X \frac {\alpha XY}{\E[\alpha XY]}\right]\nonumber
\\ & = & \E[XY] \alpha \left( 1 + \frac{\beta}{\alpha} - \E_I^0[X^-]\right)
 =  \E[XY] \alpha \left(\frac{\beta}{\alpha} - \E_I^0[X^+]\right).
\end{eqnarray}

Similarly, Equation (\ref{eqsecy-ba}) can be simplified to
\begin{equation}
\label{eqsecytherm2}
\beta (\E_I^0[Y^-]-\E_R^0[Y^+]) = \mu \E_{D_I}^0[Y^+] -\eta\mu p
\end{equation}
whereas (\ref{eqsecx-ba}) can be simplified to
\begin{equation}
\label{eqsecxtherm1bis}
\beta( \E_{R}[X^-] - \E_{I}[ X^+])= \mu \frac q p \E_{D_s}[X^+]-\eta\mu q \frac q p. 
\end{equation}

\section{AIR Thermodynamic Limit Analysis}
\label{secAIRth}
Let
\begin{equation}
\label{eq:lsa}
\eta_c^{(a)} := \frac \beta \alpha.
\end{equation}
\begin{Thm} Under the AIR TL propagation of chaos ansatz,
if $\eta \le \eta_c^{(a)}$, then there is weak extinction, whereas
if $\eta> \eta_c^{(a)}$, there is survival. 
In addition, if there is survival, in the stationary regime of the thermodynamic limit,
\begin{enumerate}
\item $\E[\widetilde X] = \frac \beta \alpha $;
\item $\E[\widetilde Y]= \eta  - \frac \beta \alpha$;
\item $ 1-\widetilde p^*= \widetilde q^* = \frac{\beta}{\eta \alpha}$;
\item $ \widetilde X$ and $\widetilde Y$ are independent and Poisson.
\item The departure rate from a station is $\lambda=\eta \mu$.
\end{enumerate}
If there is extinction, in the stationary regime,
\begin{enumerate}
\item $\E[\widetilde X] = \frac \beta \alpha $;
\item $\E[\widetilde Y]= 0$;
\item $ 1-\widetilde p^*= \widetilde q^* = 1$;
\item $ \widetilde X$ is Poisson.
\item The departure rate from a station is $\lambda=\eta \mu$.
\end{enumerate}
\end{Thm}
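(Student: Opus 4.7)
The plan is to reduce the analysis of the AIR thermodynamic limit to the AIR-AMF reactor studied in Section \ref{sec:airR}, and then solve the resulting fixed-point system algebraically. Under the AIR TL propagation of chaos ansatz, each station evolves independently as an AIR reactor driven by a deterministic infection rate $\alpha y(t)$ with $y(t)=\E[\widetilde Y(t)]$; in steady state this is precisely an AIR-AMF reactor, whose joint stationary law is the product of two independent Poisson distributions by the Jackson product-form invoked in Section \ref{sec:airR}.

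The first step is to write down the closed-network consistency conditions built into the thermodynamic limit: external arrivals at a typical station must match external departures, which yields
\begin{equation*}
\lambda \widetilde q^{*} = \mu \E[\widetilde X], \qquad \lambda \widetilde p^{*} = \mu \E[\widetilde Y],
\end{equation*}
with $\lambda=\eta\mu$. Summing these and using $\widetilde p^{*}+\widetilde q^{*}=1$ produces the total-mass identity $\E[\widetilde X]+\E[\widetilde Y]=\eta$, which is nothing but the conservation of the overall density inherited from the closed prelimit.

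The second step is to substitute the consistency identity $\lambda \widetilde q^{*}=\mu\E[\widetilde X]$ into the first-order rate conservation \eqref{eq6bis} for the AIR-AMF reactor. The $\lambda \widetilde q^{*}$ and $\mu\E[\widetilde X]$ terms cancel and the equation collapses to
\begin{equation*}
\beta\, \E[\widetilde Y] \,=\, \alpha\, \E[\widetilde X]\, \E[\widetilde Y].
\end{equation*}
Hence either $\E[\widetilde Y]=0$ (the extinction branch) or $\E[\widetilde X]=\beta/\alpha$ (the survival branch), in which case the total-mass identity forces $\E[\widetilde Y]=\eta-\beta/\alpha$. The survival branch is feasible precisely when $\eta-\beta/\alpha>0$, that is $\eta>\eta_c^{(a)}$, and then $\widetilde q^{*}=\E[\widetilde X]/\eta=\beta/(\eta\alpha)$ and $\widetilde p^{*}=1-\widetilde q^{*}$. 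Conversely, when $\eta\le\eta_c^{(a)}$, only $\E[\widetilde Y]=0$ is admissible; since then no stationary distribution with a strictly positive fraction of infected customers exists, this is weak extinction by Definition \ref{def:se}.

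The independence and the Poisson nature of $\widetilde X$ and $\widetilde Y$ at the fixed point are inherited directly from the product-form solution of the underlying Jackson network recalled in Section \ref{sec:airR}, once the infection rate is frozen at its self-consistent stationary value $\alpha\E[\widetilde Y]$. The departure rate from a typical station is $\mu(\E[\widetilde X]+\E[\widetilde Y])=\eta\mu=\lambda$, which is immediate. The main potential pitfall is conceptual rather than computational: one has to be careful to distinguish the typical-station steady state (given by the AIR reactor machinery) from the self-consistent fixed point of the thermodynamic limit, and in particular to verify that plugging in $y=\E[\widetilde Y]$ in the AIR-AMF equation is exactly what the closed-network consistency demands. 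Once this is in place, no PDE analysis is required, in stark contrast with the plain SIS case.
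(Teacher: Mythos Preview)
Your proof is correct and follows essentially the same route as the paper: the paper's one-line argument is to plug $y=p\eta$ into Equation~\eqref{eq:traf}, which is precisely Equation~\eqref{eq6bis} after the substitution $\E[\widetilde X]=\eta-y$; your use of the consistency relations $\lambda\widetilde q^{*}=\mu\E[\widetilde X]$ and $\lambda\widetilde p^{*}=\mu\E[\widetilde Y]$ to cancel terms in \eqref{eq6bis} is the same computation spelled out more explicitly. The resulting dichotomy $\E[\widetilde Y]=0$ versus $\E[\widetilde X]=\beta/\alpha$ and the product-form justification for independence and Poisson marginals match the paper exactly.
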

\begin{proof}
Assuming existence of the thermodynamic limit and its convergence to a stationary distribution,
the result is immediate when plugging in $y=p \eta$ in (\ref{eq:traf}).
\end{proof}

The AIR phase diagram is hence quite explicit. 
We have $\eta_c= \frac{\beta}{\alpha}$, which does not depend on $\mu$. In addition,
the following continuity property holds:
when $\eta\downarrow \eta_c$, $\E[\widetilde Y] \downarrow 0$.
Similar statements can be coined w.r.t. any parameter other than $\eta$.

\section{DOCS Thermodynamic Limit Analysis}
\label{secDOCSth}

Since the behavior of a station in DOCS thermodynamic limit is a specific instance of 
that of a station in the routing mean-field model, the following results hold on the former:
\begin{itemize}
\item The assumption that the 'external' arrival
rate is $\lambda$ is equivalent to the assumption that the mean number
of customers (of both types) in a station in steady state is $\frac{\lambda}{\mu}$
(see Eq. (\ref{eq:stilhold}). By symmetry, this is equivalent to assuming that
$\lim_{N\to \infty} K_N/N= \lambda/\mu:=\eta$ in the DOCS thermodynamic limit.
\item The condition $\lambda p^*= \mu \E[\widehat Y]$ is 
equivalent to $\alpha \E[\widehat X \widehat Y]= \beta \E[\widehat Y]$ in view
of (\ref{eq:rmf1y}). In words, in the DOCS thermodynamic limit, the stationary
rate of 'in station' infections matches the stationary rate of 'in station' recoveries.  
\end{itemize}

\subsection{Analytical solution}
In this section use $p$ in place of $p^*$ for the sake of light notation.
\subsubsection{Rate Conservation Equations}
As shown above, in the thermodynamic limit, we have
\begin{eqnarray}
\lambda p & = & \mu \E[\widehat Y],\label{rcpsid1}
\end{eqnarray}
in words, the rate of 'natural' migration of infected customers matches that of 'external' arrivals
of infected customers,
\begin{eqnarray}
\lambda q & = & \mu \E[\widehat X],\label{rcpsid2}
\end{eqnarray}
in words, the rate of 'natural' migration of susceptible customers matches that of 'external' arrivals
of susceptible customers, and
\begin{eqnarray}
\alpha \E[\widehat X \widehat Y]&  = & \beta \E[\widehat Y],
\label{eq:rmfyth}
\end{eqnarray}
in words, the rate of 'infections' matches that of 'recoveries'.

It follows that
$$
\E[\widehat Y] = \widehat \theta= \frac {\lambda p +\alpha \E[\widehat X \widehat Y]} {\mu + \beta}
= \frac {\lambda p +\beta \widehat \theta} {\mu + \beta},
$$
that is, $\widehat \theta=\frac{\lambda p}{\mu}$, which is 
consistent with (\ref{rcpsid1}).
\\

As for second order relations, when using the fact that $\widehat Y$ is Poisson, one gets
that (\ref{eq:rmf2y}) brings no information, whereas
(\ref{eq:rmf2x}) leads to
\begin{equation}
\label{eq:rmf2xth}
\mu \left(\E[\widehat X^2] -\E[\widehat X] -\E[\widehat X]^2 \right)
= \alpha \left( \E[\widehat X\widehat Y] \E[\widehat X]  
+ \E[\widehat X\widehat Y]  - \E[\widehat X^2\widehat Y] \right) 
\end{equation}
and (\ref{eq:rmf2xy}) to
\begin{equation}
\label{eq:rmf2xyth}
\lambda q \E[\widehat Y]
+ \lambda p \E[\widehat X]
+ \beta \E[\widehat Y] \frac \lambda \mu
= (\beta + 2 \mu) \frac \beta \alpha \E[\widehat Y] 
+ \alpha \E[\widehat X \widehat Y^2].
\end{equation}

\subsubsection{Analytic treatment based on the DOCS ODE solution}
The explicit solution of the open DOCS model is now used to analyze this thermodynamic limit.
We recall that, in the DOCS thermodynamic limit, each station behaves as
an open DOCS reactor with susceptible input rate
\begin{eqnarray*}
\lambda q + \beta \E[\widehat Y] =
\mu \E[\widehat X] + \frac{\beta}{\mu} \lambda p = \lambda q + \frac{\beta}{\mu} \lambda p,
\end{eqnarray*}
with infected input rate
\begin{eqnarray*}
\lambda p + \alpha \E[\widehat X\widehat Y] & = & (\mu + \beta) \E[\widehat Y]
= \frac{\mu+\beta}{\mu} \lambda p,
\end{eqnarray*}
and with susceptible departure rate $\nu=\mu +\beta$.
Hence, it follows from (\ref{eq:odesidpure}) that
\begin{equation}
  \E[\widehat X] =
\frac {\lambda q + \frac {\beta}{\mu} \lambda p} {\mu+\beta +\alpha }
\int_{0}^1
e^{-\frac{\lambda p \alpha^2}{\mu(\mu +\beta +\alpha)^2} (1-t)}
t^{\frac \mu {\mu +\beta +\alpha} +\lambda p\frac{(\mu+\beta)\alpha}{\mu (\mu+\beta+\alpha)^2}-1} dt .
\end{equation}
Since $\mu \E[\widehat X]= \lambda q$, we have
\begin{Prop}
In the DOCS thermodynamic limit, $q$ satisfies the fixed-point relation
\begin{equation}
\label{eq:79}
q = \frac{ q \mu + p \beta }{\mu+\beta +\alpha }
\int_0^1
e^{-\frac{\eta p \alpha^2}{(\mu +\beta +\alpha)^2} (1-t)}
t^{\frac \mu {\mu +\beta +\alpha} +\eta p\frac{(\mu+\beta)\alpha}{(\mu+\beta+\alpha)^2}-1} dt ,
\end{equation}
where the RHS is the analytic expression for $\frac{\E[\widehat X]}\eta$ in this context.
\end{Prop}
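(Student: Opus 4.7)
The proof is essentially a parameter identification followed by substitution, so I will lay out the substitutions carefully and check that the exponents in~(\ref{eq:odesidpure}) transform into those appearing in~(\ref{eq:79}).

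The plan is to view a typical station in the DOCS thermodynamic limit as an open DOCS reactor with suitably modified input rates and susceptible departure rate, and then to apply the closed form~(\ref{eq:odesidpure}) with those parameters. Concretely, by the description of the Routing DOCS reactor in Subsection~\ref{ssec:siremeli}, the PDE~(\ref{eq:routingpde}) is an instance of the open DOCS PDE~(\ref{eq:sipde}) with $\lambda q$ replaced by $\lambda q+\beta\E[\widehat Y]$, $\lambda p$ replaced by $\lambda p+\alpha\E[\widehat X\widehat Y]$, and $\nu$ taken equal to $\mu+\beta$. In the thermodynamic limit, the consistency relations~(\ref{rcpsid1}) and~(\ref{eq:rmfyth}) give $\beta\E[\widehat Y]=\frac{\beta}{\mu}\lambda p$ and $\alpha\E[\widehat X\widehat Y]=\beta\E[\widehat Y]=\frac{\beta}{\mu}\lambda p$, so the effective susceptible input rate is $\lambda q+\frac{\beta}{\mu}\lambda p$ and the effective infected input rate is $\frac{\mu+\beta}{\mu}\lambda p$.

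Next, I plug these three parameter changes into the closed form~(\ref{eq:odesidpure}), which expresses $\E[X]$ of the open DOCS reactor as an integral depending only on the rates $\lambda q$, $\lambda p$, $\mu$, $\nu$ and $\alpha$. With $\nu=\mu+\beta$, the prefactor $\frac{\lambda q}{\nu+\alpha}$ becomes $\frac{\lambda q+\frac{\beta}{\mu}\lambda p}{\mu+\beta+\alpha}$; using $\lambda=\eta\mu$ this simplifies to $\frac{\eta(q\mu+p\beta)}{\mu+\beta+\alpha}$. The coefficient $\frac{\lambda p\alpha^2}{\nu(\nu+\alpha)^2}$ in the exponential becomes $\frac{\frac{\mu+\beta}{\mu}\lambda p\,\alpha^2}{(\mu+\beta)(\mu+\beta+\alpha)^2}=\frac{\eta p\alpha^2}{(\mu+\beta+\alpha)^2}$, matching the coefficient of $(1-t)$ in~(\ref{eq:79}). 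The power of $t$ is $\frac{\mu}{\nu+\alpha}+\lambda p\frac{\alpha}{(\nu+\alpha)^2}-1$, and the substitution $\lambda p\mapsto \frac{\mu+\beta}{\mu}\lambda p$ with $\lambda=\eta\mu$ turns this into $\frac{\mu}{\mu+\beta+\alpha}+\eta p\frac{(\mu+\beta)\alpha}{(\mu+\beta+\alpha)^2}-1$, again matching~(\ref{eq:79}).

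Finally, the fixed-point relation comes from the defining consistency condition of the DOCS thermodynamic limit~(\ref{rcpsid2}), namely $\mu\E[\widehat X]=\lambda q$, i.e.\ $\E[\widehat X]/\eta=q$. Equating $q$ to the expression for $\E[\widehat X]/\eta$ obtained above yields exactly~(\ref{eq:79}).

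There is no real analytic obstacle; the only thing to be careful about is that the substitution $\nu=\mu+\beta$ has to be carried out consistently in \emph{every} occurrence of $\nu$ in~(\ref{eq:odesidpure}) (including inside the two exponents), and that the ``external'' arrival rates $\lambda p,\lambda q$ of the open DOCS reactor must be replaced by the augmented rates that include the routing contributions from the fixed-point conditions~(\ref{rcpsid1})--(\ref{eq:rmfyth}); everything else is algebraic simplification using $\lambda=\eta\mu$.
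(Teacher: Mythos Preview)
Your proof is correct and follows essentially the same approach as the paper: identify the typical station in the DOCS thermodynamic limit as an open DOCS reactor with augmented input rates $\lambda q+\frac{\beta}{\mu}\lambda p$ and $\frac{\mu+\beta}{\mu}\lambda p$ (via~(\ref{rcpsid1}) and~(\ref{eq:rmfyth})) and $\nu=\mu+\beta$, substitute into~(\ref{eq:odesidpure}), and then invoke $\mu\E[\widehat X]=\lambda q$ to obtain the fixed-point relation. Your explicit verification of how each exponent transforms is in fact more detailed than the paper's own derivation.
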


\subsubsection{Monotonicity and Convexity Properties}
Here we study the properties of the R.H.S of Equation (\ref{eq:79}) w.r.t. various parameters.

We consider the dependence on $\eta$ first. 
When rewriting the RHS of Equation \eqref{eq:79} as
\begin{align}\label{eq:new1}
\frac{q\mu + p\beta}{\mu+\alpha+\beta}
\int_0^1 e^{ -\eta p \left( \frac{\alpha^2}{(\mu+\alpha+\beta)^2}(1-t)+ \frac{(\mu+\beta)\alpha}{(\mu+\beta+\alpha)^2} |\log t|\right)} 
t^{\frac \mu {\mu +\beta +\alpha} -1} dt,
\end{align}
one gets that this RHS is a strictly decreasing function of $\eta$,
when fixing all other parameters.

The dependence on $\mu$ is more complicated.
The integrand in the RHS of (\ref{eq:79}) is a monotone increasing function of $\mu$
since $\frac{(\mu+\beta)\alpha}{\mu(\mu+\beta+\alpha)^2}$
as a function of $\mu$ is decreasing.
However, the coefficient $\frac{q\mu+p\beta}{\mu+\alpha+\beta}$ is equal
to $q+ \frac{p\beta - q (\alpha+\beta)}{\mu+\alpha+\beta}$ which is either
an increasing or decreasing function of $\mu$, depending on the sign of $p\beta - q (\alpha+\beta)$.

Consider now the dependence in $p$. Clearly, 
\begin{align}\label{eq:new2}
p\to I(p):=\int_0^1 e^{ -\eta \left( \frac{p\alpha^2}{(\mu+\alpha+\beta)^2}(1-t)+ \frac{p(\mu+\beta)\alpha}{(\mu+\beta+\alpha)^2} |\log t|\right)} 
t^{\frac \mu {\mu +\beta +\alpha} -1} 
dt
\end{align}
is a positive and strictly decreasing function of $p$, when fixing all other parameters $\eta,\mu,\alpha,\beta$.
\begin{align*}
I^{''}(p)= \int_0^1 h_1^2(t) \exp (-p h_1(t)) h_2(t) dt 
\end{align*}
is positive, since
\begin{align*}
h_1(t):= \frac{\eta\alpha^2}{(\mu+\alpha+\beta)^2}(1-t) + \frac{\eta(\mu+\beta)\alpha}{(\mu+\beta+\alpha)^2} |\log t| \ \text{and} \ h_2(t):=t^{\frac{\mu}{\mu+\beta+\alpha}-1} 
\end{align*}
 are two strictly positive functions,
and finite since $\lim_{t\to 0} t^r \log t = 0$, for any $r>0$.

Further, the numerator $q\mu + p\beta$ in the prefactor of the RHS of \eqref{eq:new1} may be represented as $\mu + p(\beta-\mu)$ which is strictly positive.
Hence, if $\beta\le \mu$, the function in the RHS
\eqref{eq:79} is strictly decreasing since its derivative w.r.t. $p$ is
\begin{align*}
D(p) := \frac{1}{\mu+\alpha+\beta} \left((\beta-\mu)I(p) + (\mu+
 p(\beta-\mu)) I{'}(p)) \right) <0,
 \end{align*}
 and strictly convex since its second derivative is
 \begin{align*}
  \frac{1}{\mu+\alpha+\beta} \left(2(\beta-\mu)I^{'}(p) + (\mu+
 p(\beta-\mu)) I{''}(p) \right) >0.
 \end{align*}
 In particular,
 \begin{align}\label{eq:RHS0}
 D(0) =
 \frac{\beta}{\mu}-1 -\eta 
\left( \frac{\alpha(\mu+\beta)}{\mu(\mu+\alpha +\beta)} + \frac{\alpha^2}{(\mu+\alpha+\beta)(2\mu+\alpha +\beta)}
\right) \equiv
\frac{\beta}{\mu}-1- \eta \cdot \frac{\alpha}{\mu}\left(1+\frac{\alpha}{2\mu+\beta}\right)^{-1}.
 \end{align}

\subsection{Phase Diagram}

Let 
\begin{equation}
\label{eq:sufcondsid}
\lambda_c^{(d)}:=\frac{\beta\mu}{\alpha}\left(1+\frac{\alpha}{2\mu+\beta}\right),\quad
\eta_c^{(d)} := \frac{l_c^{(d)}}{\mu}= \frac{\beta}{\alpha}\left(1+\frac{\alpha}{2\mu+\beta}\right).
\end{equation}
Here is an analogue of Corollary \ref{Cor10} for DOCS:

\begin{Lem}
\label{lem:tllambdd}
In the DOCS thermodynamic limit, if $\beta\le \mu$,
\begin{itemize}
\item{} 
$p=0$ is the only solution to \eqref{eq:79} $\Leftrightarrow$
$\eta \le \eta_c^{(d)}$;
\item{} 
there are two solutions to Equation \eqref{eq:79} ($p=0$ and $p^*>0$) $\Leftrightarrow$
$\eta > \eta_c^{(d)}$.
\end{itemize}
\end{Lem}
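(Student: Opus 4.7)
The plan is to read \eqref{eq:79} as a fixed-point equation $F(p,\eta)=1-p$ on $[0,1]$, with $F(p,\eta)$ denoting the RHS of \eqref{eq:79}, and to study $\phi(p):=F(p,\eta)-(1-p)$. First I would verify that $p=0$ is always a solution: plugging $p=0$ gives
\[
F(0,\eta)=\frac{\mu}{\mu+\alpha+\beta}\int_0^1 t^{\frac{\mu}{\mu+\alpha+\beta}-1}\,dt=1,
\]
so $\phi(0)=0$. I would also note that the integrand in \eqref{eq:79} is integrable for every $p\in[0,1]$ (the exponent of $t$ is $>-1$), that $F$ is continuous in $p$ on $[0,1]$, and that $F(1,\eta)>0=1-1$, whence $\phi(1)>0$.

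Next I would invoke the convexity/monotonicity analysis done just before the lemma: under the hypothesis $\beta\le\mu$, the function $p\mapsto F(p,\eta)$ is strictly convex on $[0,1]$, and hence so is $\phi$. Its right derivative at $0$ is $\phi'(0)=D(0)+1$, with $D(0)$ given in \eqref{eq:RHS0}. Using the definition \eqref{eq:sufcondsid} of $\eta_c^{(d)}$, a direct algebraic rearrangement gives
\[
\phi'(0)=\frac{\beta}{\mu}-\eta\cdot\frac{\alpha}{\mu}\Bigl(1+\frac{\alpha}{2\mu+\beta}\Bigr)^{-1}=\frac{\alpha}{\mu}\Bigl(1+\frac{\alpha}{2\mu+\beta}\Bigr)^{-1}\bigl(\eta_c^{(d)}-\eta\bigr),
\]
so that $\phi'(0)\ge 0$ iff $\eta\le \eta_c^{(d)}$, with strict inequality iff $\eta<\eta_c^{(d)}$, and $\phi'(0)<0$ iff $\eta>\eta_c^{(d)}$.

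The two cases of the lemma then follow from strict convexity. If $\eta\le\eta_c^{(d)}$, then $\phi'(0)\ge 0$ and strict convexity of $\phi$ gives $\phi'(p)>\phi'(0)\ge 0$ for every $p>0$; integrating from $0$ yields $\phi(p)>0$ on $(0,1]$, so $p=0$ is the only solution. If $\eta>\eta_c^{(d)}$, then $\phi$ is strictly negative on a right neighbourhood of $0$, while $\phi(1)>0$, so the intermediate value theorem produces a zero $p^*\in(0,1)$; uniqueness of this second zero is immediate from strict convexity, since a strictly convex function has at most two zeros (a third one would violate the convexity inequality for the middle zero). The borderline case $\eta=\eta_c^{(d)}$ fits the first item: $\phi'(0)=0$ combined with strict convexity still yields $\phi(p)>0$ for $p>0$.

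The heavy lifting — the closed-form expression \eqref{eq:79}, together with the strict monotonicity and convexity of $F(\cdot,\eta)$ under $\beta\le\mu$, and the explicit value of $D(0)$ — has already been done in the preceding subsection, so there is no real obstacle here. The only thing to be careful about is packaging: identifying $\eta_c^{(d)}$ precisely as the threshold where the slope of $\phi$ at $0$ crosses zero, and handling the critical case $\eta=\eta_c^{(d)}$ via strict (rather than merely non-strict) convexity.
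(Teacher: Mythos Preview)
Your proof is correct and follows essentially the same route as the paper's: both check the endpoint values of \eqref{eq:79} at $p=0$ and $p=1$, invoke the strict convexity of the RHS established in the preceding subsection under $\beta\le\mu$, and reduce the dichotomy to whether $D(0)<-1$ (equivalently $\phi'(0)<0$), which is then identified with $\eta>\eta_c^{(d)}$. Your write-up is in fact a bit more careful than the paper's, in that you explicitly justify integrability, verify $F(0,\eta)=1$ by direct computation, and treat the borderline case $\eta=\eta_c^{(d)}$ via \emph{strict} convexity.
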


\begin{proof} 
The LHS and the RHS of \eqref{eq:79} coincide at $p=0$, where both are equal to 1.
At $p=1$, the LHS of \eqref{eq:79} is 0, whereas the RHS 
of \eqref{eq:79} is positive.
By \eqref{eq:RHS0}, we have $D(0)<-1$
if and only if $\eta > \eta_c^{(d)}$.
When $\beta\le \mu$,
the RHS of (\ref{eq:79}) is a decreasing and convex function of $p$. Hence,
if the first derivative of the RHS of \eqref{eq:79} at 0 is less than -1, then there is exactly
one positive solution to \eqref{eq:79}, whereas if is more than or equal to -1,
there is no such solution.

In the general case, there is at least one positive solution
if the first derivative of the RHS of \eqref{eq:79} at 0 is less than -1.
\end{proof}

\begin{Thm}
Under the DOCS thermodynamic propagation of chaos ansatz,
if $\eta\le\eta_c^{(d)}$, there is weak extinction
whereas when $\eta>\eta_c^{(d)}$, there is survival.
\end{Thm}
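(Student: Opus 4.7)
The plan is to reduce the survival/extinction dichotomy to a purely analytic question about the fixed-point equation \eqref{eq:79}, then to analyze the corresponding function on $[0,1]$ using the monotonicity and convexity facts already assembled in the excerpt.

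First, I would unpack what survival and weak extinction mean under the DOCS thermodynamic propagation of chaos ansatz. Under the ansatz, a stationary regime with fraction $p$ of infected is a fixed point of the Routing DOCS reactor with input consistency $\lambda p^* = \mu \E[\widehat Y]$ and $\lambda q^* = \mu \E[\widehat X]$, which is precisely the content of the derivation leading to Equation \eqref{eq:79}. Hence survival corresponds to the existence of a solution $p^* \in (0,1)$ of \eqref{eq:79} and weak extinction to $p=0$ being its only solution.

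Second, I would set $\phi(p) := \text{RHS of \eqref{eq:79}} - (1-p)$ and collect what is essentially already in Lemma \ref{lem:tllambdd}: $\phi(0)=0$, $\phi(1)>0$, and $\phi'(0)=D(0)+1$. Using the explicit formula \eqref{eq:RHS0} together with the definition of $\eta_c^{(d)}$ in \eqref{eq:sufcondsid}, one gets
\[
\phi'(0) = \frac{\alpha}{\mu}\Bigl(1+\tfrac{\alpha}{2\mu+\beta}\Bigr)^{-1}\bigl(\eta_c^{(d)}-\eta\bigr),
\]
so the sign of $\phi'(0)$ is controlled by $\eta$ versus $\eta_c^{(d)}$. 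For $\eta>\eta_c^{(d)}$, $\phi'(0)<0$ forces $\phi<0$ on a right-neighborhood of $0$; combined with $\phi(1)>0$ and continuity, the intermediate value theorem produces $p^*\in(0,1)$ with $\phi(p^*)=0$, and this gives survival.

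Third, for $\eta\le\eta_c^{(d)}$ one needs to rule out any positive solution. When $\beta\le\mu$, this is immediate: Lemma \ref{lem:tllambdd} shows the RHS of \eqref{eq:79} is convex and decreasing, hence $\phi$ is convex; since $\phi(0)=0$ and $\phi'(0)\ge 0$, convexity forces $\phi\ge 0$ on $[0,1]$ with equality only at $p=0$. For the general case $\beta>\mu$, I would pass to the equivalent fixed-point reformulation obtained by clearing denominators in \eqref{eq:79},
\[
p = F(p) := \frac{\mu+\alpha+\beta - \mu I(p)}{\mu+\alpha+\beta + (\beta-\mu)\,I(p)},
\qquad I(p) := \int_0^1 e^{-p h_1(t)} t^{\frac{\mu}{\mu+\alpha+\beta}-1}\,dt,
\]
and verify that $F$ is increasing (because $I$ is decreasing and both numerator and denominator react monotonically) with $F(0)=0$ and $F'(0)=\eta/\eta_c^{(d)}$. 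Establishing concavity of $F$ on $[0,1]$ would then imply at most one positive fixed point and its existence precisely when $F'(0)>1$, matching the threshold.

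The main obstacle is exactly this concavity (or any surrogate monotonicity statement) of $F$ in the regime $\beta>\mu$, where the convexity of the original RHS of \eqref{eq:79} is lost. My fallback would be to exploit the integral representation of $I(p)$ as a Laplace-type integral, showing that $\log(\mu+\alpha+\beta-\mu I(p))$ and $\log(\mu+\alpha+\beta+(\beta-\mu)I(p))$ have derivatives that, combined, yield $(\log F)''\le 0$; this reduces the question to a one-parameter inequality on the moments of the probability measure $t^{\mu/(\mu+\alpha+\beta)-1}e^{-ph_1(t)}\,dt$, which should follow from the Cauchy--Schwarz or FKG-type inequality for log-concave weights. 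If even that fails, a monotonicity-in-$\eta$ coupling for the prelimit Routing DOCS network would let one transfer the $\beta\le\mu$ conclusion to the general case, since only $\eta_c^{(d)}$ depends on $\beta$ in a controlled way.
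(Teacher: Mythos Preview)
Your reduction to the fixed-point equation \eqref{eq:79}, the computation of $\phi'(0)$ in terms of $\eta_c^{(d)}-\eta$, the intermediate-value argument for survival, and the convexity argument for weak extinction when $\beta\le\mu$ are all exactly what the paper does via Lemma \ref{lem:tllambdd}. On that part you are aligned with the paper.

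Where you diverge is in the $\beta>\mu$ regime. You propose to recover weak extinction there by rewriting the fixed point as $p=F(p)$ and proving concavity of $F$ via a log-concavity or moment inequality, with a coupling fallback. You should be aware that the paper does \emph{not} prove this case: immediately after the theorem it states explicitly that ``the statement on weak extinction in the last theorem is only proved under the assumption $\beta\le\mu$. There is numerical evidence that it also holds without this condition.'' So the paper's ``proof'' of the theorem is really the proof of Lemma \ref{lem:tllambdd}, and the $\beta>\mu$ part is left as a numerically supported claim. Your additional program is therefore an attempt to close a gap the authors left open; the ideas you sketch (concavity of $F$, Cauchy--Schwarz on Laplace-type moments) are plausible directions but are not worked out, and the coupling fallback is vague since it is unclear how a monotonicity-in-$\eta$ coupling for the prelimit would let you ``transfer the $\beta\le\mu$ conclusion'' when the threshold $\eta_c^{(d)}$ itself depends on $\beta$. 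If you want to match the paper, simply note the restriction; if you want to go beyond it, the concavity-of-$F$ route is the most concrete of your suggestions but would need an actual proof.
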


Note that the statement on weak extinction in the last theorem is only proved under the assumption
$\beta\le \mu$. There is numerical evidence that it also holds without this condition.

\subsection{Interpretation of the results}

\paragraph{Monotonicity}
In DOCS, two contradictory phenomena are present
\begin{enumerate}
\item Larger motion rate implies smaller population density which lowers LOCAL epidemic spread;
\item Larger motion rate implies more GLOBAL dissemination of epidemic.
\end{enumerate}
This possibly explains the following facts:
\begin{enumerate}
\item[a] $\eta_c^{(d)}$ is decreasing in $\mu$ (motion rate), that is when fixing population density per
reactor, when one increases motion, the system is less safe.
This is because only 2 acts
whereas 1 is blocked in this setting where $\eta$ is fixed.
\item[b] $\lambda_c^{(d)}$ is increasing in $\mu$, that is when fixing an overall arrival rate in
a reactor, increasing motion leads to a safer system.
In this case 1 and 2 compete
and what our formula shows that 1 (or locality) dominates.
\end{enumerate}

\section{On the Thresholds of the Thermodynamic Limits}
\label{s:ttl}

Here is first a general observation on the connections between the properties 
of the open reactors and the thresholds (w.r.t. $\eta$) of the thermodynamic limits.
We showed that survival in a thermodynamic limit is guaranteed as soon as the derivative at $0$ of the
$p\to p_0(p)$ function of the associated reactor is more than 1.
Classical busy cycle arguments show that the latter derivative can in turn be reinterpreted
as the mean number of susceptible that a single infected customer arriving to the open reactor
infects before leaving the reactor, given that the latter has only susceptible customers upon the
arrival in question. This is precisely
the $R_0$ parameter of epidemiologists. For SIS, the result is stronger: there is
survival if the mean number of susceptible customers that a single infected
customers infects is more than 1 and strong extinction otherwise.
Of course, this mean number depends on the system considered.
For AIR, the evaluation of this mean value is based on an averaging over the geometries of the random environments
that a customer sees. For SIS or DOCS the evaluation takes the random
and dynamic nature of the environment into account. 

Here are now further observations on the thermodynamic limit thresholds in terms
of the last mean value (or $R_0$) interpretation when changing the
scales of the $(\lambda,\mu)$ or that of the $(\alpha,\beta)$ parameters respectively. 

Consider first a family of thermodynamic limit for DOCS indexed by $n$ and assume that $\lambda_n$ and $\mu_n$
both tend to infinity in such a way that the limit of the ratio $\lambda_n/\mu_n$ is a positive
constant $\eta$, whereas $\alpha$ and $\beta$ stay constant. Then $\eta_c^{(d,n)}$
tends to $\eta_c^{(a)}=\beta/\alpha$. Here is a probabilistic interpretation of this fact.
Take a unique infected customer in a large closed network of DOCS reactors,
and assume that the system only contains susceptible except for this infected customer.
As $\mu$ is very large, the latter travels very fast and visits many stations before recovering.
The mean number of customers it infects before recovering is 
$N: = \eta \alpha/\beta$. This formula uses that fact that the recovery time has mean
$1/\beta$, the infection rate is $\alpha$ and some homogenization takes place over
the population of susceptible seen before recovery, due to fast motion.
In this limit ($\lambda$ and $\mu$ high), the condition $\eta < \eta_c^{(d)}$
is then equivalent to the branching criticality condition $N < 1$, which is that of AIR.
The argument should extend to SIS: under the same assumptions,
the threshold for SIS $\eta_c^{(s)}$ should tend to $\eta_c^{(a)}=\beta/\alpha$ as well.

Consider now a family of DOCS thermodynamic limits, also indexed by $n$, and where $\beta_n$ and $\alpha_n$ both
tend to infinity in such a way that the 
limit of the ratio $\beta_n/\alpha_n$ is a positive constant $\kappa$, whereas $\lambda$ and $\mu$ stay constant.
It is easy to see that $\eta_c^{(d)}$ tends to $1+\kappa$. 
The condition $\eta> \eta_c^{(d)}$ then reads $M:= \eta \alpha/(\alpha+\beta) > 1$,
namely again like a branching condition stating that the mean number $M$ of customers infected by
a single infected customers is more than 1. Indeed, a single infected customer arriving to a station
finds in mean $\eta$ susceptible. For a tagged susceptible, start two exponential clocks,
one of parameter $\alpha$ (for its own infection), and one of parameter $\beta$
(for the departure of the infected customer). The chance that the tagged customer gets
infected is hence $\alpha/(\alpha+\beta)$. This justifies the interpretation of $M$.
Now each infected then immediately leaves for another far away queue, where its fate
is the same and independent. This justifies the branching (independence) interpretation. 
Note that $\mu$ has disappeared because it is negligible w.r.t. $\alpha$ and $\beta$. 
This interpretation is specific to DOCS.

\section{Comparison and Numerical Results}
\label{ss:num}

\subsection{Comparison of Plain SIS TL and AIR TL}
The fact that $\eta_c^{(s)}\ge \frac{\beta}{\alpha}$
follows from Lemma \ref{lem:whunc}, which in turn
relies on the conjecture that, in the stationary regime of the SIS thermodynamic limit,
there is a negative correlation between $X$ and $Y$. This negative correlation property is
substantiated by simulation but unproved at this stage.
This can be rephrased in saying that we conjecture that SIS is safer than AIR.
A potential interpretation is as follows: replacing the infection rate $\alpha Y$ by $\alpha {\mathbf E[Y]}$ 
at the same time decreases the infection rate when $Y$ is large and increases it when $Y$ is small.
In the thermodynamic limit, this last fact dominates and makes it more likely for the epidemic to survive
in AIR compared to plain SIS.

\subsection{Comparison of Plain SIS TL and DOCS TL}
In this subsection, we use a mix of simulation and analysis to compare
$\eta_c$ in the thermodynamic limits of SIS and in DOCS.
The main conclusion is that depending on parameters, either SIS or DOCS is safer.
The evaluation of $\lambda_c^{(s)}$
is based Theorems \ref{thm:surv} and \ref{thm:ext}. The simulation of $\eta_c^{(s)}$ is based on
its characterization in Corollary \ref{Cor10} in terms of the derivative at 0 of the
function $p_0(p)$. For DOCS, we use the evaluation of $\lambda_c^{(d)}$ in Lemma \ref{lem:tllambdd}.

Figure \ref{fig:grap-sev1} features a situation where both $\alpha$ and $\beta$ are moderate.
We observe that $\eta_c^{(d)}< \eta_c^{(s)}$ or equivalently SIS is safer than DOCS.

\begin{figure}[!h]
\centering

\includegraphics[width=0.62\linewidth]{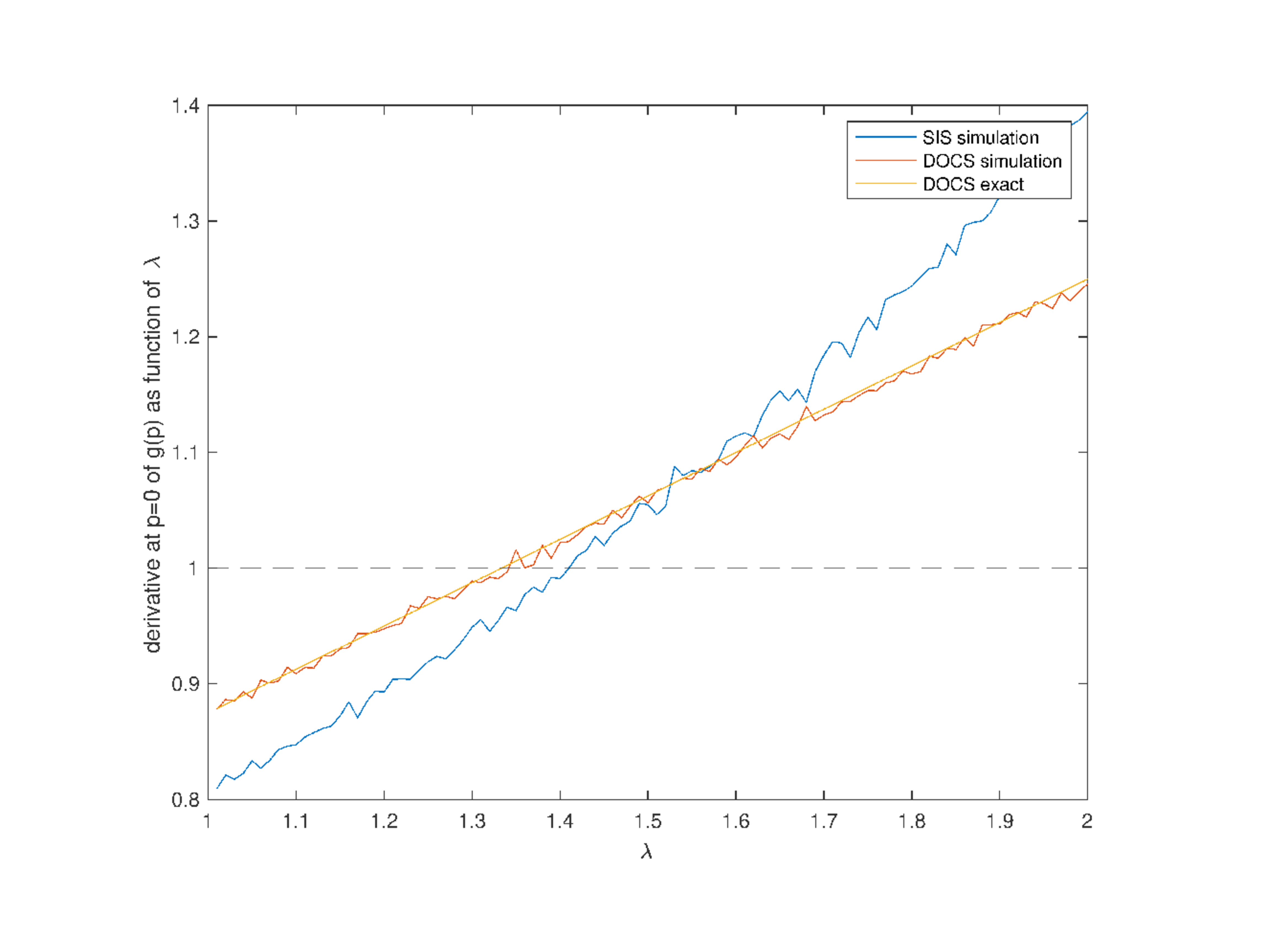}

\caption{Comparison of $\eta_c^{(s)}$ and $\eta_c^{(d)}$ for 
$\mu=1$, $\beta=1$, and $\alpha=1$.
}
\label{fig:grap-sev1}
\end{figure}

Figure \ref{fig:grap-sev2} features a situation where $\alpha$ is large and $\beta$ moderate.
We observe that $\eta_c^{(s)}< \eta_c^{(d)}$ or equivalently DOCS is safer than SIS.
One possible explanation is that although the mean population in each system is the same, in
DOCS, any station with infection present, infected customers leave immediately, which is safer
than the SIS case where they stay and infect all other susceptible customers present in the station.

\begin{figure}[!h]
\centering

\includegraphics[width=0.62\linewidth]{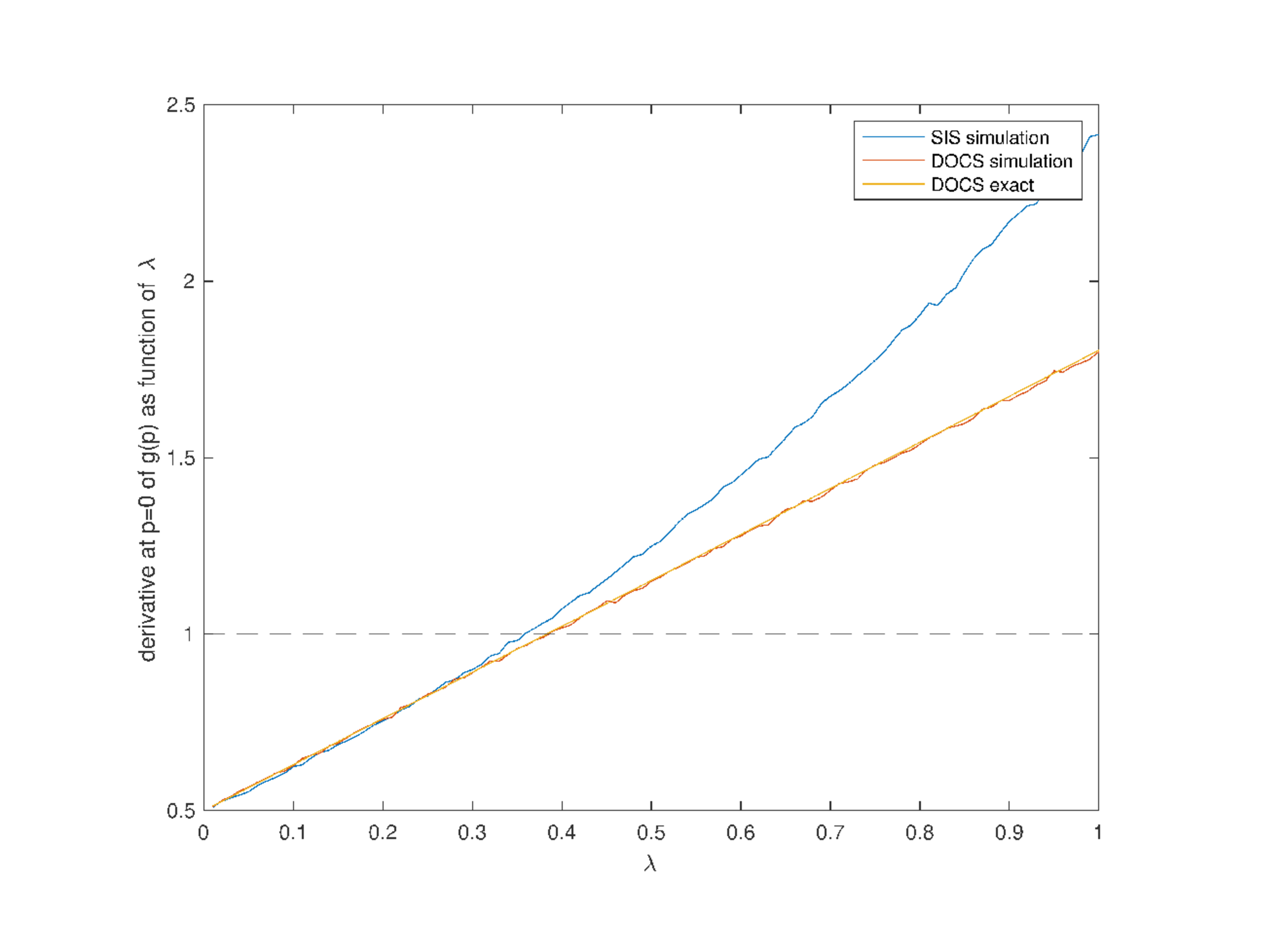}

\caption{Comparison of $\eta_c^{(s)}$ and $\eta_c^{(d)}$ for 
$\mu=1$, $\beta=1$, and $\alpha=20$;
}
\label{fig:grap-sev2}
\end{figure}

Figure \ref{fig:grap-sev3} features a situation where $\beta$ is large and $\alpha$ moderate.
Here SIS is safer than DOCS again. The interpretation is that although the mean number of
individuals per station are the same, DOCS depletes faster any station with many
infected due to their fast recovery.

\begin{figure}[!h]
\centering

\includegraphics[width=0.62\linewidth]{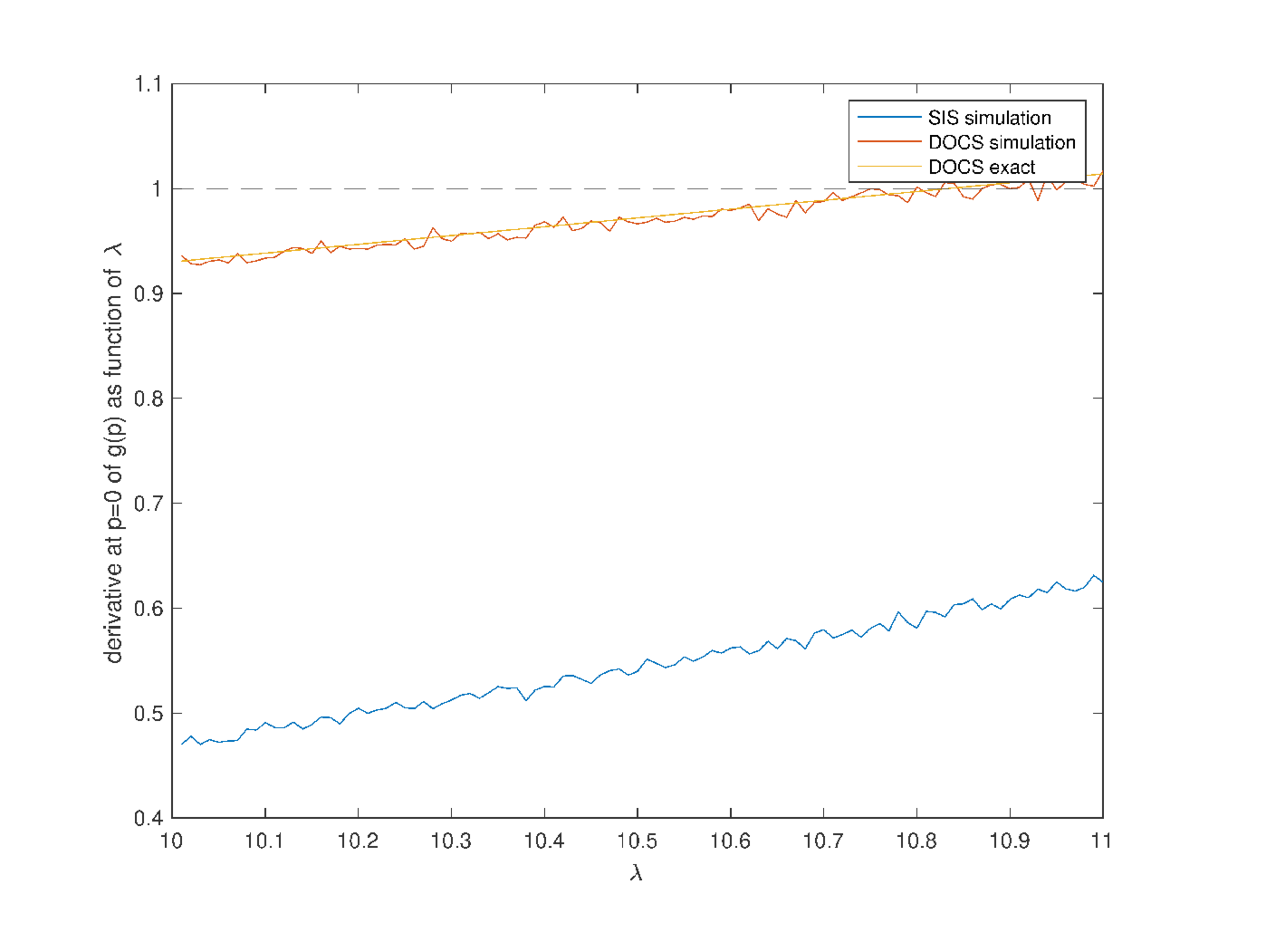}

\caption{Comparison of $\eta_c^{(s)}$ and $\eta_c^{(d)}$ for 
$\mu=1$, $\beta=10$, and $\alpha=1$;
}
\label{fig:grap-sev3}
\end{figure}

\subsection{Comparison of SIS-DOCS TL and AIR TL}
It directly follows from the expressions of $\eta_c$ in both cases
that $\eta_c^{(d)}>\eta_c^{(a)}$. This can be rephrased by saying
that DOCS is safer than AIR.
A possible interpretation is again that extinction happens when $Y$ fluctuates to small values
throughout all reactors of a large system.
By fixing $Y$ to its means as in AIR, in the thermodynamic limit, 
these fluctuations are less likely. This possibly explains why the AIR
system is less safe than the DOCS one. 

\subsection{Phase diagram in $\mu$}

For the SIS reactor and SIS thermodynamic limit, we have characterized the phase diagrams. They turned out to be of the threshold type (survival in one interval, extinction in another). Another interesting question is what such a phase diagram looks like in $\mu$. More specifically, one may be interested in fixing $\eta$, $\alpha$ and $\beta$ and asking for which values of $\mu$ the epidemic survives and for which it does not.

We have shown that, in order to answer such a question, one needs to examine the derivative at $p=0$ of the function $g(p)$. Our findings were based on the monotonicity of this derivative with respect to the relevant parameters.

Numerical evidence shows that such monotonicity with respect to $\mu$ does not always hold.
Indeed, Figure \ref{fig:derivative_vs_mu} contains graphs of the derivative as function of
$\mu$ when other parameters are fixed in two scenarios. On the right, $\eta=3$, $\alpha=5$, $\beta=1$,
and one can observe the absence of monotonicity and convexity. On the left however, when $\eta=\alpha=\beta=1$,
it appears that the derivative is monotone. This evidence suggests that the phase diagram 
in $\mu$ may be more complicated than in the case of other parameters.
This is in line with what was observed in \cite{Baccelli22}.
It is a subject of our future research plans.

\begin{figure}[!h]
\centering
\includegraphics[width=0.45\linewidth]{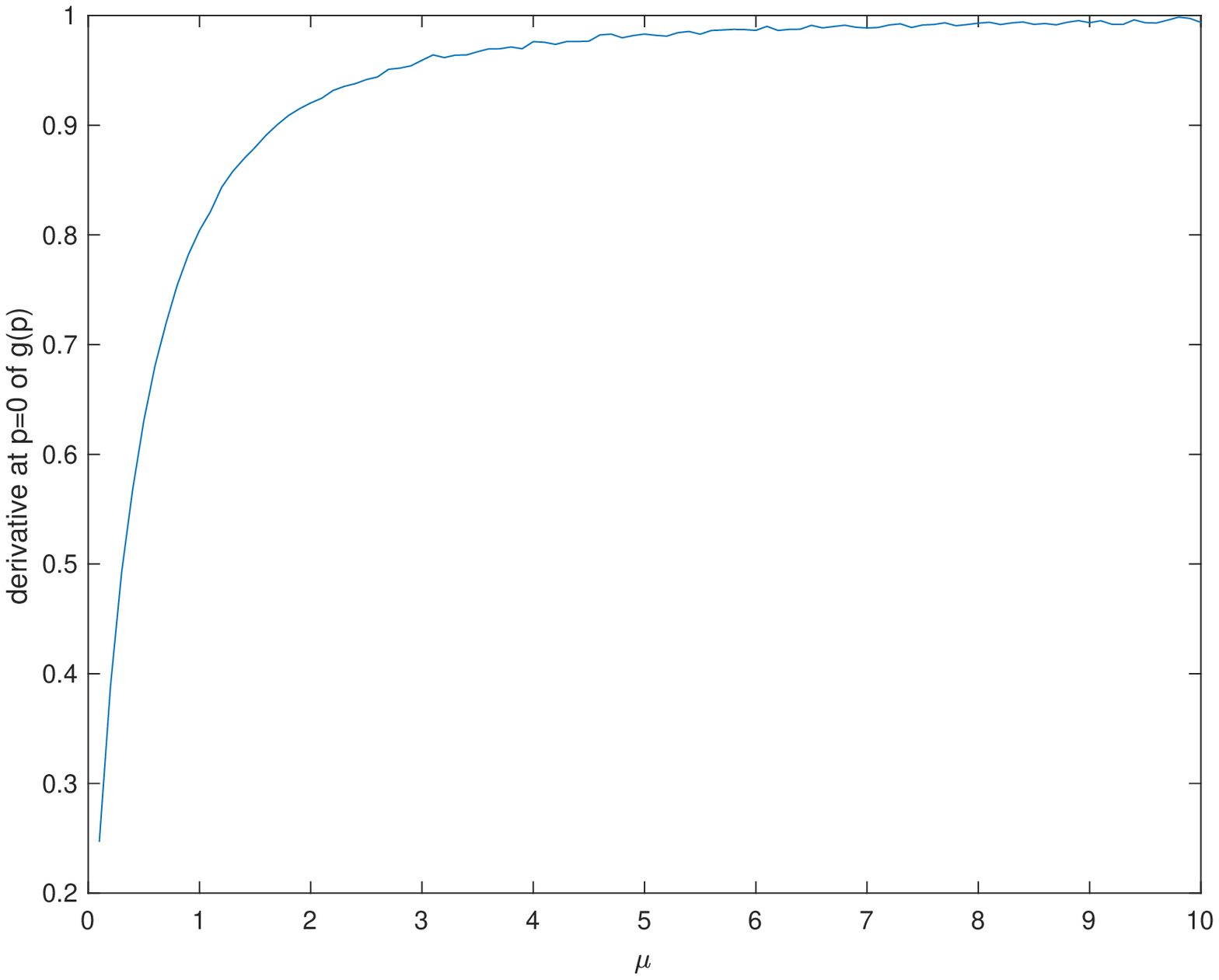}
\includegraphics[width=0.45\linewidth]{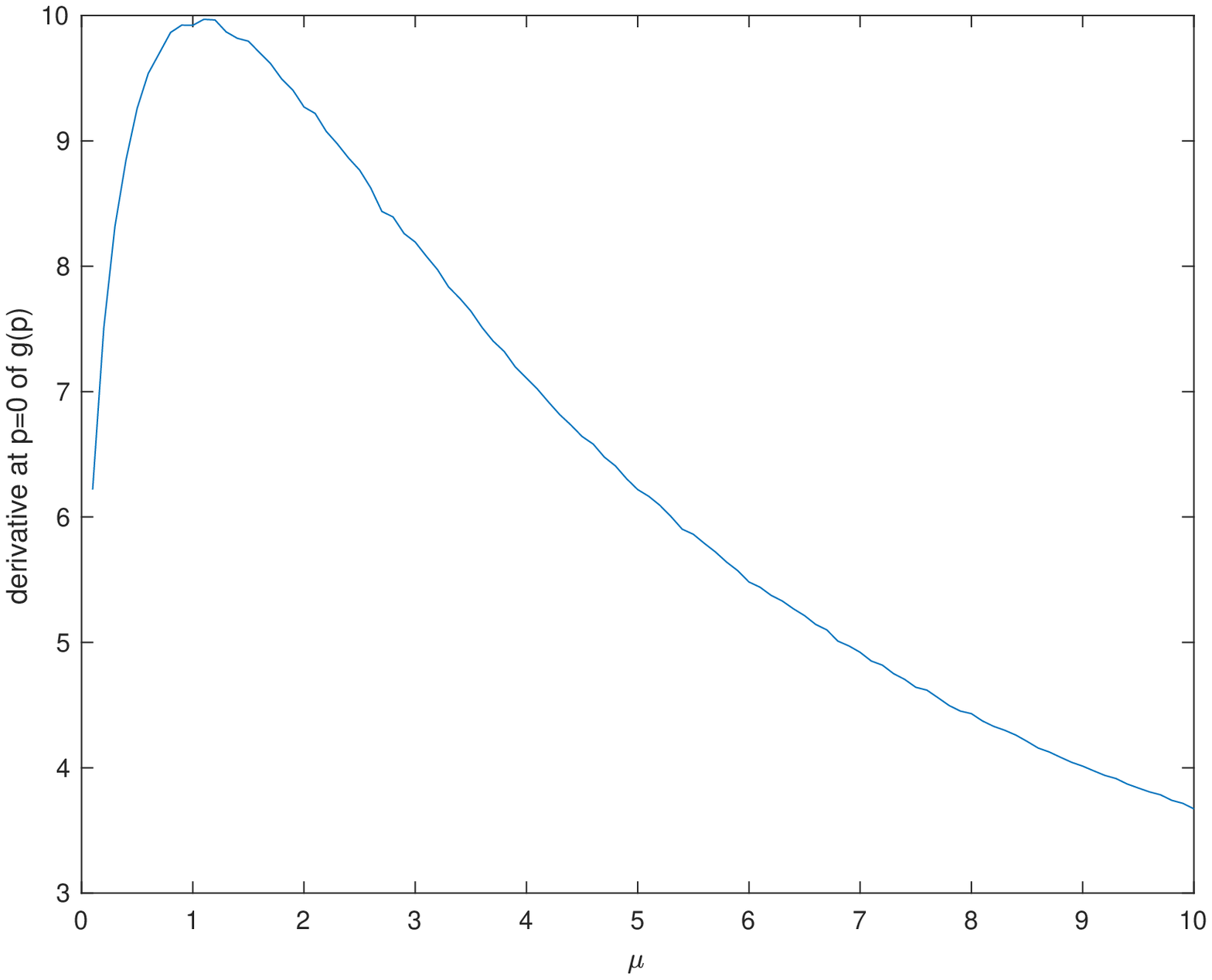}
\caption{Graphs of the derivative at $p=0$ of $g(p)$ in the SIS reactor as function of $\mu$ when $\eta$, $\alpha$ and $\beta$ are fixed. On the left, $\eta=\alpha=\beta=1$, on the right $\eta=3$, $\alpha=5$, $\beta=1$.}
\label{fig:derivative_vs_mu}
\end{figure}

\section{Conclusion and Future Research}
We rigorously proved the structure of the phase diagram of the model of the abstract
under the assumption that the Poisson hypothesis holds.
Future research on the basic model will first consist in
proving the Poisson hypothesis (Ansatz).
Another key objective on the basic SIS model will be to establish the phase diagram
for other parameters than population density.
It would be nice to complement this by computational results like those
established for the AIR and DOCS variants of the model.
Several further variants of the basic model can also be considered,
like SIRS type models where individuals go through
a recovery phase (where they cannot be infected) before being susceptible again.
We will also study further spatial SIS dynamics.
Further basic queueing models, for instance finite capacity queues, can also be considered.
In fact, such epidemics can be devised on virtually all queueing network models
of the literature.

\section{Appendix}

\subsection{Rate Conservation}
\label{app-sec-crc}
One can rewrite (\ref{eqsecy}) as
\begin{equation*}
\lambda p \E [Y]
+ \alpha \E[ XY^2] =
\lambda p \E [Y]
+ \alpha \E[XY] \E[ \frac{\alpha XY}{\alpha \E[XY]} Y ]
=(\mu +\beta) \E[Y(Y-1)].
\end{equation*}
Let $I$ denote the stationary infection epoch point process.
Its intensity is $a_i = \alpha \E[XY]$ and by Papangelou's theorem \cite{BB03}
\begin{equation}
\label{eqnotess}
\E[ \frac{\alpha XY}{\alpha \E[XY]} Y ] = \E^0_{I}[ Y],
\end{equation}
with $\E^0_{I}$ denoting expectation w.r.t. the Palm probability w.r.t. $I$ \cite{BB03}. 
Hence the LHS of the last equality is
\begin{equation*}
\lambda p \E [Y]
+ a_i \E^0_{I}[ Y],
\end{equation*}
in which we recognize the half of the increase rate of $\E[Y(Y-1)]$.
Let $M_i$ denote the point process of recovery or departures of infected customers.
The RHS of (\ref{eqnotess}) can be rewritten as
$$(\mu +\beta) \E[Y] \E[ \frac{(\mu +\beta)Y}{(\mu +\beta) \E[Y]} (Y-1) = d_i \E^0_{N} [Y-1],$$
with $d_i$ the intensity of $M_i$ and the same Palm probability notation as above.
This is twice the decrease rate of $Y(Y-1)$.
Hence (\ref{eqsecy}) is nothing else than the rate conservation principle (RCP) for 
$\frac 1 2 Y(Y-1)$. 

The RCP for $Y^2$ in turn reads
\begin{equation}
\label{eqsecy-rcp}
\lambda p \E [2Y+1]
+ \alpha \E[ XY(2Y+1)]
=(\mu +\beta) \E[Y(2Y-1)].
\end{equation}

Similarly, we can rewrite (\ref{eqsecx}) as
\begin{equation*}
\lambda q \E [X] +\beta \E[XY] =\alpha \E[ XY(X-1)] +\mu \E[X(X-1)]
\end{equation*}
or equivalently
\begin{equation*}
\lambda q \E [X] +\beta \E[XY] =\alpha \E[XY] \E[\frac{\alpha XY}{\alpha \E[XY]}(X-1)]
+\mu \E[X] \E[\frac{\mu X}{\mu \E[X]}(X-1)].
\end{equation*}
By the same arguments,
the RHS is half the decrease rate of the second factorial moment of $X$ due
to infection and departures. The LHS is half the increase rate of the same quantity due 
to arrivals of susceptible and recoveries of infected.
This equation is hence the RCP for $\frac 1  2 X(X-1)$.
The RCP for $X^2$ reads
\begin{equation}
\label{eqsecx-rcp}
\lambda q  \E [2X+1] +\beta \E[Y (2X+1)] =\alpha \E[ XY(2X-1)] +\mu \E[X(2X-1)].
\end{equation}

When differentiating the PDE w.r.t. $x$ and $y$ (or $y$ and $x$) and taking $x=y=1$, we get
\begin{equation}
\label{eqsecxy-rcp}
\lambda p \E [X] + \lambda q\E [Y] +
\beta \E [Y(Y-X-1)]+ \alpha(\E[ XY(X-Y-1])
= 2\mu\E [XY].
\end{equation}
It is easy to check that injecting the two relation of the last lemma in
the last equation does not give anything new. It just confirms that
$X+Y$ is Poisson $\lambda/\mu$.
It is easy to check that this is in fact the RCP for $XY$. 
\begin{Rem}
\label{rem4}
Using the Palm interpretation given above, and PASTA,
it is easy to check that  (\ref{eqsecy}) can be rewritten as
\begin{equation}
\label{eqsecy-ba}
\lambda p \E_{A_i} [Y^-]
+ a_i \E_{I}[ Y^-] =
\mu_i \E_{D_i}[Y^+] 
+a_s \E_{R}[Y^+], 
\end{equation}
with $\mu_i=\mu\E[Y]$ the exogenous departure rate in this queue,
$a_s=\beta\E[Y]$ the recovery rate, and $a_i=\alpha\E[XY]$ the infection rate.
This is nothing else as the classical property that, in the "infected queue", the Palm expectation 
of the number of customers just before arrivals coincides with the Palm expectation
of the number of customers just after departures.
Similarly, (\ref{eqsecx}) reads
\begin{equation}
\label{eqsecx-ba}
\lambda q \E_{A_s} [X^-] +a_s \E_{R}[X^-] = a_i \E_{I}[ X^+] +\mu_s \E_{D_s}[X^+],
\end{equation}
with similar notation.
\end{Rem}

\begin{Cor}\label{cor1} The following relations hold:
\begin{equation}
\lambda \left(q +\frac \beta \mu\right)
\left(1 +\frac \beta \alpha\right)
+\left(\lambda q -\frac \beta \alpha(\mu+\alpha+\beta)\right) \E [X]
-\mu \E[X^2] 
=\alpha \E[ X^2Y]
\end{equation}
and
\begin{eqnarray}
& & \hspace{-1.6cm} \frac \lambda \mu 
\left(\lambda p +\frac {\mu +\beta} {\mu\alpha}  (2\mu(\mu q+\beta) -\lambda \alpha)
\right)
-\left(\lambda p +\mu+\beta +\frac 2 \alpha (\mu+\beta)^2\right) \E [X] \nonumber \\
& & +(\mu+\beta) \E[X^2] =-\alpha \E[ XY^2].
\end{eqnarray}
\end{Cor}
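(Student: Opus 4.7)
The plan is to derive both identities by combining the first-order relation of Lemma~\ref{lem-for}, the two second-order relations of Lemma~\ref{lemsec}, and the auxiliary fact, recorded just after equation~\eqref{eq:sispde}, that $X+Y \sim \Po(\lambda/\mu)$. The latter gives the two bookkeeping identities
\begin{equation*}
\E[X]+\E[Y] = \tfrac{\lambda}{\mu}, \qquad
\E[X^2]+2\E[XY]+\E[Y^2] = \tfrac{\lambda}{\mu}+\bigl(\tfrac{\lambda}{\mu}\bigr)^{2},
\end{equation*}
which allow us to convert any expression involving $\E[Y]$ and $\E[Y^{2}]$ into one involving $\E[X]$, $\E[X^{2}]$ and $\E[XY]$ only, and Lemma~\ref{lem-for} then converts the mixed moment $\E[XY]$ into an affine function of $\E[X]$ via
\begin{equation*}
\alpha \E[XY] = (\mu+\beta)\E[Y] - \lambda p = (\mu+\beta)\bigl(\tfrac{\lambda}{\mu}-\E[X]\bigr)-\lambda p.
\end{equation*}

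For the first identity, I would start from the second-order relation \eqref{eqsecx},
\[
(\lambda q + \mu)\E[X] + (\alpha+\beta)\E[XY] = \alpha \E[X^{2}Y] + \mu \E[X^{2}],
\]
solve for $\alpha \E[X^{2}Y]$, and substitute the affine expression for $\alpha\E[XY]$ just written; the coefficients in front of $\E[X]$ and the constant term should then assemble, after routine algebra using $q=1-p$, into the stated form $\lambda(q+\beta/\mu)(1+\beta/\alpha) + (\lambda q - \tfrac{\beta}{\alpha}(\mu+\alpha+\beta))\E[X] - \mu \E[X^{2}]$.

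For the second identity, the analogous starting point is \eqref{eqsecy}, which gives
\[
\alpha \E[XY^{2}] = (\mu+\beta)\E[Y^{2}] - (\lambda p + \mu+\beta)\E[Y].
\]
Here I would first use the variance identity above to replace $\E[Y^{2}]$ by $\tfrac{\lambda}{\mu}+(\lambda/\mu)^{2} - \E[X^{2}] - 2\E[XY]$, then eliminate $\E[Y]$ via $\E[Y]=\lambda/\mu-\E[X]$ and $\E[XY]$ via Lemma~\ref{lem-for}, yielding an expression in terms of $\E[X]$ and $\E[X^{2}]$ only. The main (and only) obstacle is purely bookkeeping: making sure the constant term $\tfrac{\lambda}{\mu}\bigl(\lambda p + \tfrac{\mu+\beta}{\mu\alpha}(2\mu(\mu q+\beta)-\lambda\alpha)\bigr)$ and the coefficient $-(\lambda p+\mu+\beta+\tfrac{2}{\alpha}(\mu+\beta)^{2})$ of $\E[X]$ appear with the right signs after substituting $\alpha\E[XY]=(\mu+\beta)(\tfrac{\lambda}{\mu}-\E[X])-\lambda p$ twice (once through $\E[Y^{2}]$ and once through the linear $\E[Y]$ term) and collecting like powers of $\E[X]$. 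Since no new probabilistic input is required beyond the three inputs already identified, the whole argument reduces to a careful but elementary algebraic simplification, and the corollary follows directly.
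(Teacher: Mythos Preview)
Your proposal is correct and follows essentially the same route as the paper: eliminate $\E[XY]$ via the first-order relation \eqref{eq2}, plug into each of the two second-order relations of Lemma~\ref{lemsec}, and for the second identity additionally use the Poisson identities $\E[X]+\E[Y]=\lambda/\mu$ and $\E[X^2]+2\E[XY]+\E[Y^2]=\lambda/\mu+(\lambda/\mu)^2$ to trade $\E[Y]$ and $\E[Y^2]$ for $\E[X]$ and $\E[X^2]$. The remaining work is, as you note, purely algebraic bookkeeping.
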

\begin{proof}
Use (\ref{eq2}) to eliminate $\E[XY]$ in each of the relations of Lemma \ref{lemsec}.
Use also the relations $\E[X]+\E[Y]= \frac \lambda \mu $ and
$$ \E[X^2] + 2\E[XY] + \E[Y^2] = \frac {\lambda^2} {\mu^2} + \frac \lambda \mu $$
to get the second relation.
\end{proof}

\subsection{Proofs of Lemmas \ref{lem:pop}, \ref{lem:unic}  and \ref{lem8}}
\label{secap1}

\paragraph{Proof of Lemma \ref{lem:pop}}
The proof includes 4 steps: monotonicity, concavity, strict concavity, and differentiability.

Below, to make the proof more transparent, we use the notation $N_I(t)$
instead of $X(t)$ and $N_S(t)$ instead of $Y(t)$. In addition, the properties
are expressed w.r.t. $\lambda=\eta\mu$ rather than $\eta$.
Since $\mu$ is a constant, this is equivalent.

\paragraph{(1) Monotonicity.} 

Consider the typical busy cycle. Assume that the first customer arrives at an empty system at
time $0$ and let $T>0$ be the first time when the system becomes empty again.
We have to show that the mean number of infected customers that depart
from the system within time interval $[0,T]$ is an increasing function of $p$.

For that, we consider two models, with input probabilities of being infected $p$ and $\widehat{p}>p$.
For the input probability $p$ and for $0\le t \le T$, we denote by $N(t)$ the total 
number of customers at time $t>0$,
by $N_I(t)$ the number of infected customers and by $N_S(t)$ the number of susceptible customers.
Clearly, $N(t)=N_I(t)+N_S(t)$. Let
$\widehat{N}(t)$, $\widehat{N}_I(t)$ and $\widehat{N}_S(t)$ by the corresponding
processes related to probability $p'$, with 
$\widehat{N}(t)=\widehat{N}_I(t)+\widehat{N}_S(t)$ for $t\ge 0$.

We produce a coupling of two the models such that, for all $0\le t\le T$,
$N(t)=\widehat{N}(t)$ a.s., while $\widehat{N}_I(t)\ge N_I(t)$ and, therefore, $\widehat{N}_S(t)\le N_S(t)$.
Thus, for the total numbers of departures of infected customers within the first busy cycle,
we get $D_I(T) \le \widehat{D}_I(T)$ a.s. Moreover, we will show that ${\mathbf P}(D_I(T) < \widehat{D}_I(T))>0$.

Let $t_n, n\ge 1$ be exponential-$\lambda$ random variables, $\sigma_{n,j}, n,j\ge 1$  exponential-$\mu$
random variables, $a_{n,i,j}$ exponential-$\alpha$ random variables, $b_{n,j}$ exponential-$\beta$
random variables, and $u_{n}$ have uniform distribution in the interval $(0,1)$.
We assume all these r.v.'s to be mutually independent.

We introduce embedded epochs $T_0=0<T_1< T_2 < \ldots < T_{\psi}=T$, where $\psi$ is a random
natural number representing the total number of events (jumps) that occur in both systems.
This is not restrictive as some of these events will be fictitious in either system.
Both processes $(N_S(t),N_I(t))$ and $(\widehat{N}_S(t),\widehat{N}_I(t))$ are piecewise constant and may
make jumps at time instants $T_n$ only. We assume the processes to be right-continuous, 
and customers to be numbered in each group at any time, in a way to be made precise in due time.

At time $T_0=0$, we let 
\begin{align*}
N_S(0)={\mathbf I} (u_1>p) = 1 -N_I(0) \ \ \mbox{and}
\ \ 
\widehat{N}_S(0)={\mathbf I} (u_1>\widehat{p}) = 1 -\widehat{N}_I(0)
\end{align*}
and
\begin{align*}
D_S(0)=D_I(0)=\widehat{D}_S(0)=\widehat{D}_I(0)=0.
\end{align*}
Then we evaluate the values of the processes recursively: that is,
we produce both time $T_{n+1}$ and their values at time $T_{n+1}$ 
given time $T_n$, their values at time $T_n$, and the values of $t_{n+1}$, $u_{n+1}$,  
$\{\sigma_{n+1,j}\}_{j\ge 1}$, $\{a_{n+1,i,j}\}_{i,j\ge 1}$,
$\{b_{n+1,j}\}_{j\ge 1}$.

Assume we have introduced both processes up to time $T_n$ and showed that that $\widehat{N}_I(T_n)\ge N_I(T_n)$ and $\widehat{N}(T_n)=N(T_n)$. 
Assume that $\sigma_{n+1,j}, 1\le j \le N(T_n)$ are the remaining
service times of all present customers, where customers numbered $1,\ldots,N_I(T_n)$
are infected in both systems, customers numbered
$N_I(T_n)+1,\ldots,\widehat{N}_I(T_n)$ are susceptible in the first systems and
infected in the second system, and customers numbered $\widehat{N}_I(T_n)+1, \ldots,N(T_n)$
are susceptible in both systems.
Assume that, for $j=N_I(T_n)+1,\ldots,\widehat{N}_I(T_n)$, the random variables $b_{n+1,j}$ 
represent the recovery times for corresponding infected customers in the second system
and for $j=1,\ldots, N_I(T_n)$, the recovery times for customers that are infected in both systems.
Assume that if customer $j$ is infected and customer $i$ is susceptible at time $T_n$,
then $a_{n+1,j,i}$ is the instant when $j$ may infect $i$.

Consider the random sets of integers 
\begin{align*}
C_1(n) = \{j:   1\le j \le N_I(T_n)\},  
C_2(n) = \{j:  N_I(T_n)< j \le \widehat{N}_I(T_n)\},  
C_3(n) = \{j:  \widehat{N}_I(T_n)< j \le N(T_n)\}.
\end{align*}
Let, for $k=1,2,3$, 
\begin{align*}
\Sigma_{n+1,k} = \min_{j\in C_k(n)} \sigma_{n+1,j} \ \ 
\mbox{and}
\ \  B_{n+1,k} = \min_{j\in C_k(n)} b_{n+1,j},
\end{align*}
where $\min_{\emptyset} = \infty$, by convention.
Next, for $k,l=1,2,3$,  let
\begin{align*}
A_{n+1,k,l} = \min_{j\in C_k(n), i\in C_l(n)} a_{n+1, j,i}.
\end{align*}
Finally, let
\begin{align}
\theta_{n+1} = \min (t_{n+1}, \min_{1\le k \le 3} \Sigma_{n+1,k},
\min_{1\le k \le 2} B_{n+1,k},
\min (A_{n+1,2,3}, A_{n+1,1,3},A_{n+1,1,2})).
\end{align}
Therefore, $\theta_{n+1}$ is the next time instant when something happens in any of the systems,
either the arrival of a new customer or departure of one of present customers or recovery
of one of present customers or infection of one of a present customers by another one.

Then
\begin{align*}
T_{n+1} = T_n+\theta_{n+1}.
\end{align*}

If $\theta_{n+1}=t_{n+1}$, i.e. a new customer arrives. Then 
\begin{align*}
D_S(T_{n+1})=D_S(T_n), D_I(T_{n+1})=D_I(T_n),
\widehat{D}_S(T_{n+1})=\widehat{D}_S(T_n), \widehat{D}_I(T_{n+1})=\widehat{D}_I(T_n)
\end{align*}
and $N(T_{n+1})=\widehat{N}(T_{n+1})=N(T_n)+1$, 
\begin{align*}
N_S(T_{n+1}) &=N_S(T_n)+{\mathbf I}(u_{n+1}>p), \ \
\widehat{N}_S(T_{n+1}) =\widehat{N}_S(T_n)+{\mathbf I}(u_{n+1}>\widehat{p}),\\
N_I(T_{n+1}) &=N_I(T_n)+{\mathbf I}(u_{n+1}\le p), \ \
\widehat{N}_I(T_{n+1}) =\widehat{N}_I(T_n)+{\mathbf I}(u_{n+1}\le \widehat{p}).
\end{align*}

If $\theta_{n+1}= \min_{1\le k\le 3} \Sigma_{n+1,k}$,
then one of present customers departs.
If $\Sigma_{n+1,1}$ is the smallest among the three,
then there is departure of a customer that is infected in both systems. Therefore, 
\begin{align*}
D_I(T_{n+1})=D_I(T_n)+1, \ \ 
\widehat{D}_I(T_{n+1})=\widehat{D}_I(T_n)+1, \ \ 
D_S(T_{n+1})=D_S(T_n), \ \ 
\widehat{D}_S(T_{n+1})=\widehat{D}_S(T_n)
\end{align*}
and
\begin{align*}
N_I(T_{n+1})=N_I(T_n)-1, \ \ 
\widehat{N}_I(T_{n+1})=\widehat{N}_I(T_n)-1, \ \ 
N_S(T_{n+1})=N_S(T_n), \ \ 
\widehat{N}_S(T_{n+1})=\widehat{N}_S(T_n).
\end{align*}
By the symmetry, if $\sigma_{n+1,3}$ is the smallest, then there is departure of a customer that is susceptible in both systems.  Therefore,
\begin{align*}
D_S(T_{n+1})=D_S(T_n)+1, 
\widehat{D}_S(T_{n+1})=\widehat{D}_S(T_n)+1,
D_I(T_{n+1})=D_I(T_n), 
\widehat{D}_I(T_{n+1})=\widehat{D}_I(T_n)
\end{align*}
and
\begin{align*}
N_S(T_{n+1})=N_S(T_n)-1, 
\widehat{N}_S(T_{n+1})=\widehat{N}_S(T_n)-1,
N_I(T_{n+1})=N_I(T_n), 
\widehat{N}_I(T_{n+1})=\widehat{N}_I(T_n).
\end{align*}
Next, if $\Sigma_{n+1,2}$ is the smallest, then this means that the strict inequality
$N_I(T_n)<\widehat{N}_I(T_n)$ holds and there is a departure of a customer that is susceptible
in the first system and infected in the second. Then 
\begin{align*}
D_I(T_{n+1})=D_I(T_n), 
\widehat{D}_I(T_{n+1})=\widehat{D}_I(T_n)+1,
D_S(T_{n+1})=D_S(T_n)+1, 
\widehat{D}_S(T_{n+1})=\widehat{D}_S(T_n)
\end{align*}
and
\begin{align*}
N_I(T_{n+1})=N_I(T_n), 
\widehat{N}_I(T_{n+1})=\widehat{N}_I(T_n)-1,
N_S(T_{n+1})=N_S(T_n)-1, 
\widehat{N}_S(T_{n+1})=\widehat{N}_S(T_n).
\end{align*}
One can see that in all three cases
\begin{align}\label{induction1}
N_I(T_{n+1})\le \widehat{N}_I(T_{n+1}) 
\ \ \mbox{and}
\ \ 
D_I(T_{n+1})\le \widehat{D}_I(T_{n+1}).
\end{align}

Similarly, if $\theta_{n+1} = \min_{k=1,2} B_{n+1,k}$,
then the numbers of departures do not change
and either there is a simultaneous recovery of a customer that was infected in both systems or
there was a customer that was susceptible in the first system
and infected in the second, it has recovered in the second system
and nothing has changed in the first system.
Again, one can see that the needed inequalities \eqref{induction1} continue to hold.

Finally, if $\theta_{n+1} = \min (A_{n+1,2,3}, A_{n+1,1,3},A_{n+1,1,2})$,
then the number of departures stays the same and there are again three scenarios.
If $A_{n+1,2,3}$ is the smallest among the three, then the set $C_2(n)$ of customers that are susceptible 
in the first system and infected in the second system at time $T_n$ 
is non-empty, one of them has infected one of susceptible customers in the second system,
and nothing has changed in the first system. Therefore, the number of susceptible customers
in the second system decreases and the number of susceptible customers in the first system stays
the same. Thus, the needed inequalities \eqref{induction1} continue to hold.
If $A_{n+1,1,2}$ is the smallest, then the set $C_2(n)$ is non-empty again, so we
have strict inequality $N_S(T_n)>\widehat{N}_S(T_n)$ and,
at time $T_{n+1}$, one of customers from this set
becomes infected in the first system and nothing changes in the second system. Therefore,
\begin{align*}
N_S(T_{n+1})=N_S(T_n) \le \widehat{N}_S(T_n)-1 =
\widehat{N}_S(T_{n+1}),
\end{align*}
as required. 
If $A_{n+1,1,3}$ is the smallest, then one of the customers that is healthy at
time $T_n$ in both systems becomes infected in both system. So the required inequalities continue to hold.

This completes the proof of the fact the $D_I(t)\le \widehat{D}_I(t)$ at any time $0<t<T$ and, in particular,
$D_I\equiv D_I(T) \le \widehat{D}_I\equiv \widehat{D}_I(T)$ a.s. It is left to show that the inequality may
be strict with positive probability. However, this is almost obvious:
\begin{align*}
\{D_I < \widehat{D}_I\} \supseteq \{D_I=0, \widehat{D}_I=1\} = 
\{\sigma_{1,1}<t_1\} \cap
\{ p<u_1<\widehat{p}\}, 
 \end{align*}
where the events on the right are independent and of positive probabilities.

\begin{Rem}
\label{rem24}
The monotonicity property discussed above can be extended in two ways: 
\begin{enumerate} 
\item Rather than comparing the two systems in a busy cycle, one compares them
over the whole time half axis. For this, one has to start the two systems 
at time $T_0=0$ with the same total population $N(T_0)=\widehat{N}(T_0)$ 
and with more infected customers in the dominating system than in the 
dominated one, i.e., with $N_I(T_0)\le \widehat{N}_I(T_0)$ a.s. 
Based on the same arguments, one
proves by induction over the overall jump times $\{T_n\}$ 
that in the coupling described above, for all $t$,
$N(t)= \widehat{N}(t)$ a.s. and
$N_I(t)\le \widehat{N}_I(t)$ a.s. 
\item The setting of 1. above can be extended to the situation where
rather than having constant fractions of the Poisson arrivals, $p$ and $\widehat p$, that are infected
in the two systems, with $p\le \widehat p$, one has deterministic 
time-varying fractions $p(t)$ and $\widehat p(t)$ of the Poisson arrivals 
which are infected in the two systems, with $p(.)\le\widehat p(.)$. Then again, 
if $N_I(T_0)\le \widehat{N}_I(T_0)$ a.s., then, in the same coupling,
$N(t)= \widehat{N}(t)$ a.s. and
$N_I(t)\le \widehat{N}_I(t)$ a.s. for all $t$.
\end{enumerate} 
\end{Rem}

\paragraph{(2) Concavity.}
We now introduce a slightly different coupling that allows us to consider simultaneously
models with different parameters.

From now, we do not enumerate customers. Instead, we consider a countable number ${\cal Z}$
of ``locations" for them that coincide with ``servers". More precisely,
we assume that an arriving customer chooses an empty server at random and stays
there until its departure from the system. 

We continue to consider a single busy cycle on the time interval $[0,T]$.
We again denote by $T_0<T_1<T_2<\ldots$ the time instants when the state of the systems {\it may} change.
Hence, the state of the system $Z(t)$ at time $t$ is a collection of pairs
$\{ (z,c_z(t)), z\in{\cal Z}\}$, where $z$ is a server and $c_z$ is its
``color''. Here $c_z(t) ={\bf n}$ (where ``${\bf n}$'' means ``no color'') if server $z$ is
empty at time $t$ and $c_z(t)$ has one of several other colors, otherwise. 
The meaning of these colors will come in due time. Let 
\begin{align*}
{\cal N}(t) = \{z : c_z(t) \ne {\bf n} \}
\end{align*}
be the set of occupied servers, and  
\begin{align*}
N(t) = \sum_{z\in {\cal z}} {\mathbf I} (c_z(t)\ne{\bf  n})
\end{align*} 
its cardinality.

We introduce the following sequences of random variables:
\begin{itemize}
\item $\{u_n\}_{n\ge 0}$ are uniformly distributed in $(0,1)$; 
\item $\{t_n\}_{n\ge 1}$ are exponential-$\lambda$;
\item $\{\sigma_{n,z}\}_{n\ge 0, z\in{\cal Z}}$ are exponential-$\mu$;
\item $\{b_{n,z}\}_{n\ge 0, z\in {\cal Z}}$ are exponential-$\beta$;
\item $\{a_{n,z,v}\}_{n\ge 0, z,v\in {\cal Z}, z\ne v}$ are exponential-$\alpha$.
\end{itemize}
We assume that all these random variables are mutually independent.

Let $T_0=0$. Introduce recursively $\{\theta_k\}$ and $T_k=\sum_1^k \theta_j$. Denote ${\cal N}_k =  {\cal N} (T_k+0)$.
For $n\ge 1$, assuming that $T_{n-1}$ is defined, we let  
$\Sigma_n = \min_{z\in {\cal N}_{n-1}} \sigma_{n,z}$,
$B_n =   \min_{z\in {\cal N}_{n-1}} b_{n,z}$, 
$A_n =  \min_{z,v\in {\cal N}_{n-1}, z\ne v} a_{n,z,v}$.
Then let 
\begin{align*}
\theta_n = \min \{ t_n, \Sigma_n,A_n,B_n\} \ \ \mbox{and} \ \ 
T_{n} = T_{n-1}+\theta_n.
\end{align*}

Now we are ready to define a ``3-color'' process.
Let $p\ge 0, q\ge 0, r>0$ be numbers that sum up to 1, $p+r+q=1$. 
We assume that each arriving customer gets the red color with probability $p$, 
the magenta color with probability $r$ and the green color with probability $q$. 
It occupies a server (that gets the same color as the customer).

We assume the process to be piecewise constant between time instants $T_n$. 
We introduce coloring by induction.

At time $T_0=0$, we color the arriving customer magenta if $u_0\le r$, red if $r<u_0\le r+p$ and green
if $u_0>r+p$, and assume that the customer keeps its color within time interval $(T_0,T_1]$.

Assume that coloring is done up to time $T_n$.
Proceed with coloring for the time interval $(T_n,T_{n+1}]$.

If $\theta_n=t_n$, then we place an arrived customer at one of the empty servers and color
the arrived customer magenta if $u_n\le r$, red if $r<u_n\le r+p$ and green if $u_n>r+p$.

If $\theta_n=\sigma_{n,z}$ for some $z\in {\cal N}_n$,
then customer $z$ leaves the system and the corresponding server becomes idle (`no color').

If $\theta_n = \beta_{n,z}$ for some $z\in {\cal N}_n$,
then customer $z$ becomes green whatever color it had before.

If $\theta_n = A_n$, then we have to consider three cases.
At time $T_n$, let ${\cal N}_{n,m}, \  {\cal N}_{n,r}, \ {\cal N}_{n,g}$
be the sets of magenta, red and green customers, where clearly 
\begin{align*}
{\cal N}_{n} = {\cal N}_{n,m} \cup {\cal N}_{n,r} \cup {\cal N}_{n,g}.
\end{align*} 
Let $N_{n,m}, \ N_{n,r}$ and $N_{n,g}$ be the corresponding cardinalities. 
If $\theta_n = a_{n,z,v}$ for some $z\in {\cal N}_{n,g}$,
then (since green means susceptible) there is no new infection and
all colors stay the same (one can say that this is a ``false coloring'').

If $\theta_n = a_{n,z,v}$ for some $z\in {\cal N}_{n,r}$,
then customer $v$ (and the corresponding server) gets red, whatever color it had before.

Finally, if $\theta_n = a_{n,z,v}$ for some $z\in {\cal N}_{n,m}$,
then customer $v$ (and the corresponding server) gets magenta if it had either
magenta or green before -- or keeps the red color if it was red before.

Thus, we have defined the 3-color dynamics within the busy cycle.

As a result of the proposed coupling construction,
one can observe the following. 
\begin{enumerate}
\item If we do not distinguish (``merge'') the green and magenta colors (and recolor them as ``new green''),
then we get the ``two-color'' susceptible-infected  model considered earlier, with the probability
of green arrivals $r+q$
and the probability of red arrivals $p$; call this model the $(p, r+q)$ model.
\item If we do not distinguish (``merge'') the magenta and red colors (and recolor them as ``new red''),
then we get the ``two-color''  susceptible-infected  model considered earlier,
with the probability of green arrivals $q$
and the probability of red arrivals $r+p$; call this model the $(r+p,q)$ model.
\end{enumerate}
This means that, for $0\le t\le T$, the random variable  $N_m(t)$ represents the ``excess''
of the number of `red' customers in the $(r+p,q)$ model in comparison with the $(p,r+q)$ model. 

Assume now that $q>0$ and consider the 3-color
$(r,\widehat{p},\widehat{q})$-model with $\widehat{p}>p$ and $r+\widehat{p}+\widehat{q}=1$.
Let $\widehat{\cal N}_m(t)$ be the set of magenta customers at time $t$ in this model and $\widehat{N}_m(t)$ its cardinality.
Then straightforward induction arguments show that
$\widehat{\cal N}_m(t) \subseteq {\cal N}_m(t)$ and, therefore,
$\widehat{N}_m(t)\le N_m(t)$ a.s.,  for any $0\le t \le T$.
In turn, this implies that the total number of departures within the first cycle of
magenta customers in the corresponding systems,
call them $D_m(T)$ and $\widehat{D}_m(T)$, satisfy 
\begin{align*}
D_m(T) \ge \widehat{D}_m(T) \ \ \mbox{a.s.}
\end{align*}
Recall the notation $p_o=g(p)$. Then
\begin{align*}
g(p) = \frac{{\mathbf E} D_g(T)}{{{\mathbf E} D(T)}}, \ \ 
g(p+r) = \frac{{\mathbf E}(D_g(T)+D_m(T))}{{{\mathbf E} D(T)}}, \ \ 
g(\widehat{p}) = \frac{{\mathbf E} \widehat{D}_g(T)}{{{\mathbf E} D(T)}}, \ \ 
g(\widehat{p}+r) = \frac{{\mathbf E}(\widehat{D}_g(T)+\widehat{D}_m(T))}{{{\mathbf E} D(T)}}, 
\end{align*}
where. as before, 
${{\mathbf E} D(T)}$ is the mean number of customers served in the first busy cycle (which is the same as the total number of departures within the cycle).
Since
\begin{align*}
g(p+r) - g(p) 
=
\frac{{\mathbf E} D_m(T)}{{{\mathbf E} D(T)}}
\ge 
\frac{{\mathbf E} \widehat{D}_m(T)}{{{\mathbf E} D(T)}}
=
g(\widehat{p}+r) - g(\widehat{p}),
\end{align*}
we get the required concavity. 

\paragraph{(3) Strict concavity.}
It is enough to consider the latter coupling construction and to justify  that, for any $r>0$ and
any $0\le p <\widehat{p} \le 1-r$,
${\mathbf P} (D_m(T)>\widehat{D}_m(T))>0$.
One can see that this strict inequality holds on the following event of a strictly positive probability:
\begin{align}\label{strictconv}
\{ u_0\le r, \ \ \theta_1=t_1, \ \ u_1\in (r+p, r+\widehat{p}),
\theta_2 = a_{2,2,1}, \ \theta_3=\sigma_{3,0}, \ \ 
\theta_4 = \sigma_{4,1}\}.
\end{align}
The latter means that the following sequence of events occurs:
\begin{itemize}
\item the first customer gets the magenta
color in both 3-color systems;
\item then the second customer appears and gets the green color in the first system and red in the second;
\item then the second customer attempts to recolor the first one, this does not work in the first system and we continue to have
1 green and 1 magenta, while the attempt is successful in the second system, and we get two reds;
\item then the first customer leaves the system and then the second leaves the system, this ends the busy cycle.
\end{itemize}

One can see that $D_m(T)=1>\widehat{D}_m(T)=0$ on the event
\eqref{strictconv}.
 
\paragraph{(4) Differentiability.} 
Let $h>0$ be small.  For $p>0$, introduce a 4-color model with 
green, violet, magenta and red colors. Consider the second coupling construction
introduced in the {\bf `convexity'} subsection. If we have an arrival at time 
 $T_n$, then the new customer gets green color
 if $u_n<p-h$, violet color if $p-h\le u_n < p$,
 magenta color if $p\le u_n < p+h$, and red color if
 $u_n \ge p+h$. Let $A_v$ be the event that there is only one violet arrival and no magenta arrivals in the (first) busy cycle, and $A_m$ that there is only one magenta arrival and no violet arrivals. These events have equal probabilities that are of order $ch+o(h)$, where $c={\mathbf E} \nu$ and $\nu$ is the mean number of arrivals within a busy cycle; there appear only (at most) three colors (green, violet, and red) on the event $A_v$ and, similarly, at most three colors (green, magenta, and red) on the event $A_m$. Moreover, the dynamics of the process on the events $A_v$ and $A_m$ are identical, with the obvious swap of violet and magenta customers.
 Therefore, the mean number ${\mathbf E} (D_v(T)\ | A_v))$ of violet departures given the event $A_v$ coincides with the mean number ${\mathbf E} (D_m(T) \ | \ A_m)$ of magenta departures given the event $A_m$.  
Further, the event that there is 2 or more violet and/or magenta arrivals has probability $O(h^2)=o(h)$.
Since
\begin{align*}
& g(p)-g(p-h) = 
\frac{{\mathbf E} D_v(T)}{{\mathbf E} D(T)} =
ch \cdot \frac{{\mathbf E} (D_v(T)\ | A_v))}{{\mathbf E} D(T)} + o(h)\\
&=ch \cdot \frac{{\mathbf E} (D_m(T)\ | A_m))}{{\mathbf E} D(T)} + o(h) = \frac{{\mathbf E} D_m(T)}{{\mathbf E} D(T)} +o(h) =
g(p+h)-g(p) + o(h),
\end{align*}
we get
\begin{align*}
\frac{g(p)-g(p-h)}{h} = \frac{g(p+h)-g(p)}{h} + o(1).
\end{align*}
By letting $h$ tend to infinity, we obtain the desired differentiability of function $g$ at point $p$.
 
\paragraph{Proof of Lemma \ref{lem:unic}} 
The proof follows from the following arguments:\\
When $p=1$, we have $p_o<1$, and if $p=0$, then $p_o=0$. The function $p_o=g(p)$
is monotone increasing, strictly concave and differentiable. 
If the right derivative $g'(0)$ is less than or equal to 1, there is no other solutions
to $p=p_o$ within $(0,1)$.
If $g'(0)>1$, there is exactly one another solution, say $p^*$, to the fixed-point equation $p=g(p) \equiv p_o$, such that $0<p^*<1$.

\paragraph{Proof of Lemma \ref{lem8}}

Let $0<\widehat{\lambda}<\lambda$. By the Splitting theorem for Poisson processes, Poisson-$\widehat{\lambda}$
process may be obtained from Poisson-$\lambda$ process by i.i.d. thinning of points with acceptance probability $r=\widehat{\lambda}/\lambda$.

Consider the first busy cycle of length $T$ 
for the infinite-server queue with input rate $\lambda$, call it ``system 1''.
Let $D\equiv D(T)$ be the number of customers that are served in/departed from system 1
within time $[0,T]$. Then in ``system 2'' with input rate $\widehat{\lambda}$, the
total number $\widehat{D}$ of departures within $[0,T]$ has a conditional binomial 
distribution $Bin (D,r)$, i.e., given any value $D=k$, we have $\widehat{D} \sim Bin (k,r)$.
Then ${\mathbf E} \widehat{D} = r {\mathbf E} D$.

Let $D_I$ be the total number of infected departures from system 1 and
$\widehat{D}_I$ from system 2, within the time interval $[0,T]$. 
Clearly, $ D_I\ge \widehat{D}_I \ \ \mbox{a.s.}$

We have 
\begin{align*}
G(h,\lambda) =\frac{{\mathbf E} D_I}{{\mathbf E}D}
\ \  \mbox{and} \ \ 
G(h,\widehat{\lambda}) =\frac{{\mathbf E} \widehat{D}_I}{r{\mathbf E}D}.
\end{align*}

For small $h$, given $D$, the total number of arrivals of infected customers within $[0,T]$ to system 1
is 0 with probability $1-hD +o(h)$, 1 with probability $hD$, and more than one with probability $o(h)$. 
Let $\kappa_i=1$ if the $i$'th arrival is infected and $\kappa_i=0$,
otherwise (here ${\mathbf P} (\kappa_i=1)=h$).
Let $\zeta_i=1$ if the $i$'th arrival to system 1 is selected for system 2
and $\zeta_i=0$, otherwise (here ${\mathbf P}(\zeta_i=1)=r$). Then
\begin{align*}
{\mathbf E} D_I &= \sum_k {\mathbf P} (D=k)
\sum_{i=1}^k {\mathbf P} (A_{k,i})
{\mathbf E}\left(D_I \ | \ D=k, A_{k,i} \right)\\
&=
h  C_{\lambda} 
+o(h)
\end{align*}
where
\begin{align*}
C_{\lambda} =  \sum_k {\mathbf P} (D=k)
\sum_{i=1}^k 
{\mathbf E}\left(D_I \ | \ D=k, A_{k,i} \right)
\end{align*}
and, for $1\le i \le k$,
\begin{align*}
A_{k,i }= \{ \kappa_i=1, \kappa_j=0 \ \mbox{for all} \ 1\le j \le k, \ j\neq i\}.
\end{align*}
Next, 
\begin{align*}
{\mathbf E} \widehat{D}_I &= \sum_k {\mathbf P} (D=k) {\mathbf E}\left( \sum_{i=1}^k 
\widehat{D}_I {\mathbf I} (A_{k,i}){\mathbf I}(\zeta_i=1)
 \ | \ D=k\right) + o(h)\\
&\le 
 \sum_k {\mathbf P} (D=k) {\mathbf E}\left( \sum_{i=1}^k 
D_I {\mathbf I} (A_{k,i}){\mathbf I}(\zeta_i=1)
 \ | \ D=k\right) + o(h)\\
&= r {\mathbf E} D_I  + o(h),
\end{align*}
and we get the needed monotonicity.
In fact, there is strict inequality in the second line above since the event
\begin{align*}
\{ D=2, \kappa_1=1, \kappa_2=0, \zeta_1=0, \zeta_2=1\}
\end{align*}
has a positive probability and, assuming that the following sequence of events occur in system 1:
(1) customer 1 infects customer 2;
(2) customer 1 leaves;
(3) customer 2 leaves,
we get $D_I=2$ in the first system while $\widehat{D}_I=0$ in the second system.

\subsection{Monotonicity in $\alpha$ and $\beta$}
\label{ss:mab}

The construction of the coupling for the SIS reactor is as follows. The two processes are
piecewise constants and may change their states (jump) at embedded epochs $T_0=0<T_1<T_2<\ldots$ only.
Let $X_{i,n}=X_i(T_n+0)$ and $Y_{i,n}=Y_i(T_n+0)$,
for $i=1,2$ and $n=0,1,2,\ldots$. Further, let $N_n=X_{1,n}+Y_{i,n}$. 
At each time $T_n+0$, we set an exponential clock of rate $\lambda$, $N_n$ clocks of
rate $\mu$, $N_n$ clocks of rate $\beta$, and $N_n(N_n-1)$ clocks of rate $\alpha_2$.
We equip all customers, $i=1,\ldots,N_n$, with a $\mu$--clock and a $\beta$-clock, 
and all pairs of customers, say $(i,j)$.  with an $\alpha$-clock.
All clocks are mutually independent.

Let $T_{n+1}$ be the time when the first of these clocks rings. 
If this is the $\lambda$-clock, then a new customer arrives and it (simultaneously) becomes either $I$,
with probability $p$, or $S$, with probability $q=1-p$. If it is the $i$-th $\mu$-clock,
then the $i$'th customer leaves both systems simultaneously.
If it is the $i$'th $\beta$-clock, then the $i$'th customer becomes $S$ in both systems,
regardless of its earlier state. And if it is the $(i,j)$'th $\alpha$-clock, then, in the second system 
(that with infection parameter $\alpha_2$), the $j$'th customer
becomes $I$ if the $i$'th customer is $I$, regardless of the history, and does not change its state
if the $i$'th was $S$. In the first system (with infection parameter $\alpha_1$),
if the $i$'th customer is $I$, then the $j$'th customer becomes $I$ with probability
$\alpha_1/\alpha_2$, and keeps the earlier state (no jump) in all other cases.

with this coupling, direct induction arguments provide the announced monotonicity.

A similar coupling construction holds for the closed system with the following
simplifications and modifications:
here $N(t)\equiv N$, for any $t$; 
there is no $\lambda$-clock;
the ringing of a $\mu$-clock means that the corresponding customer
moves at random to any of $N$ stations, and we assume that in both systems, customers
move to the same stations;
when an $\alpha$-clock rings that corresponds to the $(i,j)$ pair of customers,
the $j$'th customer becomes $I$ if $i$ and $j$ are located at the same station.

\subsection{Negative Correlation and Anti-Association of the SIS Dynamics}
\label{sec4}

\subsubsection{Negative Correlation}

Below, we say that the RV $A$ is {\em more variant than Poisson} if
$\E[A^2]\ge \E[A]^2 +\E[A]$.

\begin{Lem}
\label{lem9}
In the contagion reactor,
let $X$ and $Y$ denote the the stationary number
of infected and susceptible customers, respectively.
If $X$ and $Y$ are both more variant than Poisson, then
$X$ and $Y$ are negatively correlated.
\end{Lem}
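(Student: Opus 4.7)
The plan is to exploit the key structural fact established in the preliminary observations of Section \ref{sec2}, namely that $X+Y \sim \Po(\lambda/\mu)$, which was derived directly from $\Phi(z,z)=e^{-\frac{\lambda}{\mu}(1-z)}$. This one observation essentially forces a trade-off between the individual variances of $X$ and $Y$ and their covariance.

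First I would write down the identity
\begin{equation*}
\var(X+Y) = \var(X) + \var(Y) + 2\,\cov(X,Y).
\end{equation*}
Because $X+Y$ is Poisson with mean $\lambda/\mu$, the left-hand side equals $\lambda/\mu = \E[X+Y] = \E[X]+\E[Y]$.

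Next I would invoke the ``more variant than Poisson'' hypothesis, which by definition gives $\var(X) \ge \E[X]$ and $\var(Y) \ge \E[Y]$. Adding these and substituting into the identity above yields
\begin{equation*}
\E[X] + \E[Y] \;=\; \var(X)+\var(Y) + 2\,\cov(X,Y) \;\ge\; \E[X]+\E[Y] + 2\,\cov(X,Y),
\end{equation*}
so $\cov(X,Y) \le 0$, which is the required negative correlation.

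There is really no obstacle here, since both ingredients (the Poisson law of $X+Y$ and the hypothesis on the marginal variances) are handed to us; the proof is a two-line variance accounting. The only thing worth flagging is that the result is essentially sharp: equality in the variance bounds would force $\cov(X,Y)=0$, which is consistent with the intuition that the negative correlation between susceptible and infected customers is precisely the ``excess'' variability of $X$ and $Y$ over what a Poisson split of $X+Y$ would produce.
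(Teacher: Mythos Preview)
Your proof is correct and is essentially the same argument as the paper's: both exploit that $X+Y$ is Poisson (so $\var(X+Y)=\E[X]+\E[Y]$) together with the variance hypothesis to force $\cov(X,Y)\le 0$. The paper writes the identity in raw second moments rather than variance/covariance notation, but the content is identical.
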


\begin{proof}
Since $X+Y$ is Poisson,
$$\E[(X+Y)^2]=(\E[X]+\E[Y])^2 +\E[X]+\E[Y].$$
Hence
$$2\E[XY]= 2\E[X]\E[Y] + (\E[X]^2+\E[X]-\E[X^2]) + 
(\E[Y]^2+\E[Y]-\E[Y^2]).$$
\end{proof}

\begin{Rem}
\label{rem1}
\begin{enumerate}
\item It follows from the proof of the preceding lemma that
negative correlation is equivalent to
\begin{equation}
(\E[X]^2+\E[X]-\E[X^2]) + (\E[Y]^2+\E[Y]-\E[Y^2])\le 0,
\end{equation}
which is weaker than having both $X$ and $Y$ more variant than Poisson.
\item If the more variable than Poisson assumption does not hold,
by using the fact that $\E[X^2]\ge \E[X]^2$ (in place of $\E[X^2]\ge \E[X]^2+\E[X]$)
and a similar inequality for $Y$,
we get that the following inequality always holds:
\begin{equation}
\label{eq:ersatz}
\E[XY] \le \E[X]\E[Y] + \frac 1 2 \E[X+Y],
\end{equation}
which is weaker than negative correlation.
\end{enumerate}
\end{Rem}

We recall that $R$ (resp. $I$) denote the stationary point process of recoveries (resp. infections)
with intensity $a_r$ (resp. $a_i$).

\begin{Lem}
\label{lem11}
In the stationary reactor
\begin{itemize}
\item[(i)] The random variable $X$ is more variable than Poisson iff
$$a_r \E_R^0[X^-]- a_i \E^0_I[X^+]\ge \E[X](\mu \E[X]-\lambda q).$$
\item[(ii)] The random variable $Y$ is more variable than Poisson iff
$$a_i \E_I^0[Y^-]- a_r \E^0_R[Y^+]\ge \E[Y]( \mu\E[Y]-\lambda p).$$
\item[(iii)] The random variables $X$ and $Y$ are negatively correlated iff
$$a_r \E_R^0[X^-] +a_i \E_I^0[Y^-] - a_r \E^0_R[Y^+]- a_i \E^0_I[X^+]
\ge  \E[X](\mu \E[X]-\lambda q) + \E[Y]( \mu\E[Y]-\lambda p) .$$
\end{itemize}
\end{Lem}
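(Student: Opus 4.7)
The plan is to derive, for each of (i), (ii), and (iii), an exact identity whose left-hand side is (up to the factor $\mu$) the quantity $\E[X^2]-\E[X]^2-\E[X]$, $\E[Y^2]-\E[Y]^2-\E[Y]$, or their sum. Part (iii) will then follow from the two single-variable identities and Lemma~\ref{lem9} (more precisely, from the sharper equivalent reformulation in Remark~\ref{rem1}(1) that only requires $X+Y\sim\Po(\lambda/\mu)$).

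For (i), I would start from the RCP relation (\ref{eqsecx}) and translate each of its nonlinear terms into Palm expectations. By Papangelou's theorem applied to the infection point process $I$ (intensity $a_i=\alpha\E[XY]$) and the recovery point process $R$ (intensity $a_r=\beta\E[Y]$), one gets $\alpha\E[X^2Y]=a_i\E^0_I[X^-]$, and since $X$ drops by one at an infection while being unchanged at a recovery, also $a_i\E^0_I[X^+]=\alpha\E[X^2Y]-\alpha\E[XY]$ and $a_r\E^0_R[X^-]=\beta\E[XY]$. Substituting in (\ref{eqsecx}) yields $\mu(\E[X^2]-\E[X])=\lambda q\E[X]+a_r\E^0_R[X^-]-a_i\E^0_I[X^+]$, i.e., exactly the Palm reformulation (\ref{eqsecx-ba}). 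Subtracting $\mu\E[X]^2$ from both sides gives the identity
\[ \mu\bigl(\E[X^2]-\E[X]^2-\E[X]\bigr)=a_r\E^0_R[X^-]-a_i\E^0_I[X^+]-\E[X]\bigl(\mu\E[X]-\lambda q\bigr), \]
and since $\mu>0$, the LHS is nonnegative iff $X$ is more variable than Poisson, proving (i). The argument for (ii) is entirely parallel, starting from (\ref{eqsecy}) and using $a_i\E^0_I[Y^-]=\alpha\E[XY^2]$ together with $a_r\E^0_R[Y^+]=\beta(\E[Y^2]-\E[Y])$ (now $Y$ is unchanged at an infection and drops by one at a recovery).

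For (iii), I would simply add the two identities obtained in the proofs of (i) and (ii):
\[ \mu\bigl[(\E[X^2]-\E[X]^2-\E[X])+(\E[Y^2]-\E[Y]^2-\E[Y])\bigr] \]
is then equal to the RHS of the inequality in (iii), minus $\E[X](\mu\E[X]-\lambda q)+\E[Y](\mu\E[Y]-\lambda p)$. By Remark~\ref{rem1}(1), the bracketed sum on the left is nonnegative if and only if $\cov(X,Y)\le 0$, which yields exactly the equivalence stated in (iii).

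The only delicate bookkeeping is identifying the $X^\pm$ and $Y^\pm$ shifts correctly at each event type; once the four Palm identities for $\alpha\E[X^2Y]$, $\alpha\E[XY^2]$, $\beta\E[XY]$ and $\beta(\E[Y^2]-\E[Y])$ are written down, the rest is straightforward algebra. In particular, no new probabilistic ingredient beyond (\ref{eqsecy})--(\ref{eqsecx}), Papangelou's formula, and the fact that $X+Y$ is Poisson (which underlies Lemma~\ref{lem9} and Remark~\ref{rem1}(1)) is needed.
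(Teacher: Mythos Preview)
Your proposal is correct and follows essentially the same route as the paper: start from the second-order relations (\ref{eqsecx}) and (\ref{eqsecy}), rewrite the nonlinear terms $\beta\E[XY]$, $\alpha\E[XY(X-1)]$, $\alpha\E[XY^2]$, and $\beta\E[Y(Y-1)]$ as Palm expectations via Papangelou, rearrange to isolate $\mu(\E[X^2]-\E[X]^2-\E[X])$ and $\mu(\E[Y^2]-\E[Y]^2-\E[Y])$, and then invoke Remark~\ref{rem1}(1) for part (iii). One small wording slip: $Y$ is not ``unchanged at an infection'' (it increases by one), but since you only use $Y^-$ there, your Palm identity $a_i\E^0_I[Y^-]=\alpha\E[XY^2]$ is nonetheless correct.
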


\begin{proof}
It follows from (\ref{eqsecx}) that
\begin{eqnarray*}
\mu(\E[X^2] - \E[X]^2 -\E[X]) & = & \lambda q \E[X]+ \beta \E[XY] -\alpha\E[XY(X-1)] -\mu\E[X]^2
\\ &= &
 \lambda q [X]+ a_r \E_R^0[X^-] -a_i\E_I^0[X^+]  -\mu\E[X]^2,
\end{eqnarray*}
which proves the first result.

Similarly, 
It follows from (\ref{eqsecy}) that
\begin{eqnarray*}
\mu \E[Y^2] - \mu \E[Y]^2 -\mu \E[Y] & = & \lambda p \E[Y] + \alpha  \E[XY^2)] -\beta \E[(Y-1)Y] - \mu \E[Y]^2\\
& = & a_i\E_I^0[Y^-] -  a_r\E^0_R[Y^+] + \E[Y](\lambda p -\E[Y]) ,
\end{eqnarray*}
which proves the first result.

From Remark \ref{rem1}, $X$ and $Y$ are negatively correlated iff
$$\E[X^2] - \E[X]^2 -\E[X]
+\E[Y^2] - \E[Y]^2 -\E[Y] \ge 0,$$
which proves the last result.
\end{proof}

\begin{Lem}
\label{lem:negcor}
Under the negative correlation conjecture, in the stationary reactor,
\begin{equation} \frac \lambda \mu \ge \E[X]\ge \frac{\mu +\beta+\alpha \frac \lambda \mu 
-\sqrt{(\mu +\beta+\alpha \frac \lambda \mu )^2 -4\alpha\lambda \left(q+\frac \beta \mu \right)}}{2\alpha}
\ge 0
\end{equation}
and
\begin{equation} 0 \le \E[Y]\ge \frac \lambda \mu -\frac{\mu +\beta+\alpha \frac \lambda \mu 
-\sqrt{(\mu +\beta+\alpha \frac \lambda \mu )^2 -4\alpha\lambda \left(q+\frac \beta \mu \right)}}{2\alpha}
\le \frac \lambda \mu.
\end{equation}
\end{Lem}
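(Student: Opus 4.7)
The plan is to combine the first-order rate conservation of Lemma~\ref{lem-for} with the negative correlation hypothesis $\E[XY]\le \E[X]\E[Y]$ and the mass identity $\E[X]+\E[Y]=\lambda/\mu$ (which follows from $X+Y\sim\mathrm{Pois}(\lambda/\mu)$) to reduce the claim to a scalar quadratic inequality in $\E[X]$.

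First I would rewrite the conservation identity in the equivalent form $\lambda q+\beta\E[Y]=\mu\E[X]+\alpha\E[XY]$, which is the companion of Lemma~\ref{lem-for} noted immediately after its proof. The negative correlation hypothesis then upgrades it to
\[
\lambda q+\beta\E[Y]\le \mu\E[X]+\alpha\E[X]\E[Y].
\]
Substituting $\E[Y]=\lambda/\mu-\E[X]$ and writing $x:=\E[X]$, a routine regrouping of terms turns this into the quadratic inequality
\[
\alpha x^{2}-\left(\mu+\beta+\alpha\tfrac{\lambda}{\mu}\right)x+\lambda\left(q+\tfrac{\beta}{\mu}\right)\le 0.
\]
The leading coefficient is positive and the two roots are precisely the numbers $x_{\pm}$ appearing in the statement of the lemma; in particular, the smaller root $x_{-}$ is exactly the AIR mean-field value $\E[\widetilde X]$ derived in Section~\ref{sec:airR}.

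The lower bound $\E[X]\ge x_{-}$ is then immediate, and the upper bound $\E[X]\le \lambda/\mu$ is just the mass identity together with $\E[Y]\ge 0$. The companion bound on $\E[Y]$ then follows by applying $\E[Y]=\lambda/\mu-\E[X]$ to both bounds for $\E[X]$, which yields the AIR formula as an upper bound for $\E[Y]$ on one side and the trivial $\E[Y]\ge 0$ on the other.

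The one genuine verification needed along the way is that the discriminant $(\mu+\beta+\alpha\lambda/\mu)^{2}-4\alpha\lambda(q+\beta/\mu)$ is non-negative, so that the quadratic does admit real roots; this is automatic from the mere existence of a non-negative stationary $\E[X]$ satisfying the quadratic inequality. I do not expect a substantive obstacle: the lemma is essentially a one-step algebraic consequence of first-order rate conservation and the negative correlation hypothesis, and the interesting content is the recognition that the bounding roots coincide with the AIR values of Subsection~\ref{sec:airR} --- which gives the quantitative sense in which SIS is ``safer'' than AIR under negative correlation.
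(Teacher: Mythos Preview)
Your proof is correct and follows essentially the same route as the paper: apply negative correlation to the first-order rate conservation identity, substitute $\E[Y]=\lambda/\mu-\E[X]$, and read off the bound from the resulting quadratic inequality in $\E[X]$. Your added remark that the smaller root $x_-$ coincides with the AIR mean-field value $\E[\widetilde X]$ of Section~\ref{sec:airR} is a nice piece of context that the paper's proof does not spell out at this point.
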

\begin{proof}
Using now the negative correlation conjecture, we get
\begin{equation}
\label{eq6}
\lambda q \le \mu \E[X] -\beta \E[Y] +\alpha \E[X]\E[Y],\end{equation}
and
\begin{equation} \lambda p \ge (\mu +\beta) \E[Y] -\alpha \E[X]\E[Y].\end{equation}
Using once more the fact that $\E[X]+\E[Y]=\frac \lambda \mu$, it is easily checked
that these two equations lead to the very same inequality, namely:
\begin{eqnarray}\label{eq:quadx}
\lambda q \le \mu \E[ X] -\beta \frac \lambda \mu +\beta \E[ X] +\alpha 
\frac \lambda \mu \E[ X] -\alpha \E[ X]^2.
\end{eqnarray}
Consider the last equation with equality. This quadratic in $\E[ X]$ 
has two positive roots, one larger than $\frac \lambda \mu$, and the other
smaller. The result then follows.
\end{proof}

\subsubsection{Anti-Association}
The anti association conjecture is that for all non-decreasing functions $f$ and $g$
from the integers to the real line, we have
\begin{equation}
\E [f(X)g(Y)] \le \E[f(X)]\E[g(Y)].
\end{equation}

\subsubsection{Equivalences}

The equivalences of Lemma \ref{lem11} can be rephrased as follows
\begin{Lem}
In the stationary thermodynamic limit
\begin{itemize}
\item[(i)] The random variable $X$ is more variable than Poisson iff
\begin{equation}
\label{eq-eqx1}
\E_R^0[X^-]\ge \E^0_I[X^+]\end{equation}
or equivalently iff
\begin{equation}
\E^0_I[X^+]\le \frac \alpha \beta.
\end{equation}
\item[(ii)] The random variable $Y$ is more variable than Poisson iff
\begin{equation}
\label{eq-eqy1}
\E_I^0[Y^-]\ge \E^0_R[Y^+]
\end{equation}
or equivalently iff
\begin{equation} \E^0_I[Y^-]\ge \E[Y]=p \frac{\lambda}{\mu}.
\end{equation}
\item[(iii)] The random variables $X$ and $Y$ are negatively correlated iff
\begin{equation}
\label{eq-eqgl1}
\E_R^0[X^- - Y^-] \ge \E^0_I[X^+-Y^+]
\end{equation}
or equivalently iff
\begin{equation}
\label{eq-eqgl2}
\frac \beta {\alpha +\beta}
\left(\E^0_I[Y^-] - p \frac \lambda \mu\right)
+ \frac{\beta}{\mu}
\left(\frac \beta \alpha -\E_I^0[X^+] \right)\ge 0.
\end{equation}
\end{itemize}
\end{Lem}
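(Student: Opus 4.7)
The strategy is to specialize Lemma~\ref{lem11} to the thermodynamic limit using the fixed-point identities of Subsection~\ref{sec3} together with the first-order relation~\eqref{eq2therm}. In that limit one has $\mu\E[X]=\lambda q$, $\mu\E[Y]=\lambda p$, and $\alpha\E[XY]=\beta\E[Y]$. Consequently (a) the two ``slack'' terms $\E[X](\mu\E[X]-\lambda q)$ and $\E[Y](\mu\E[Y]-\lambda p)$ on the right-hand sides of Lemma~\ref{lem11}(i)--(iii) vanish, and (b) the two intensities $a_i=\alpha\E[XY]$ and $a_r=\beta\E[Y]$ coincide. Dividing the three inequalities of Lemma~\ref{lem11} by this common intensity immediately produces \eqref{eq-eqx1}, \eqref{eq-eqy1} and \eqref{eq-eqgl1}.

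For the ``equivalently'' reformulations I would use Papangelou's formula applied to the stationary recovery point process $R$, whose intensity is $\beta Y$. This yields
\[
\E_R^0[X^-]=\frac{\E[\beta YX]}{\E[\beta Y]}=\frac{\E[XY]}{\E[Y]}=\frac{\beta}{\alpha},
\]
the last step using~\eqref{eq2therm}. Inserting this constant value into~\eqref{eq-eqx1} gives the second form in~(i); the same conclusion can be read off directly from~\eqref{eqsecxtherm1plus}, which already rewrites the Poisson-overdispersion gap of $X$ as $\E[XY]\alpha(\beta/\alpha-\E_I^0[X^+])/\mu$. For~(ii) the equivalent form is already spelled out in~\eqref{eqsecytherm1plus}, which writes the Poisson-overdispersion gap of $Y$ as $\beta\E[Y](\E_I^0[Y^-]-\E[Y])/(\mu+\beta)$, so ``$Y$ more variable than Poisson'' is automatically equivalent to $\E_I^0[Y^-]\ge\E[Y]=p\lambda/\mu$.

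For~(iii) the cleanest path is Remark~\ref{rem1}(1), which characterizes negative correlation of $X$ and $Y$ by
\[
\bigl(\E[X^2]-\E[X]^2-\E[X]\bigr)+\bigl(\E[Y^2]-\E[Y]^2-\E[Y]\bigr)\ge 0.
\]
Substituting the two closed-form expressions for the overdispersion gaps provided by~\eqref{eqsecxtherm1plus} and~\eqref{eqsecytherm1plus} and factoring out the common $\beta\E[Y]$ turns this inequality into a positive linear combination of the two univariate criteria, with coefficients $1/\mu$ and $1/(\mu+\beta)$ inherited from the two second-order rate-conservation identities; this is exactly the content of~\eqref{eq-eqgl2}.

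I do not foresee any real obstacle here: once Lemma~\ref{lem11}, the fixed-point identities of Subsection~\ref{sec3}, and the rewritten second-order relations are accepted, everything is short algebra. The only delicate point is the Palm-bookkeeping of $^-$ versus $^+$ values at infection and recovery epochs, since at an infection $X\mapsto X-1$ and $Y\mapsto Y+1$ while at a recovery $X\mapsto X+1$ and $Y\mapsto Y-1$; this is what makes $a_i$ and $a_r$ appear with the correct signs in \eqref{eq-eqx1}--\eqref{eq-eqgl1}.
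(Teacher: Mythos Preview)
Your proposal is correct and follows essentially the same route as the paper: specialize Lemma~\ref{lem11} to the thermodynamic limit (where $\mu\E[X]=\lambda q$, $\mu\E[Y]=\lambda p$, and $a_i=a_r$) to obtain \eqref{eq-eqx1}, \eqref{eq-eqy1}, \eqref{eq-eqgl1}, and then read off the alternative formulations from \eqref{eqsecxtherm1plus}, \eqref{eqsecytherm1plus}, and Remark~\ref{rem1}(1). Your explicit Papangelou computation $\E_R^0[X^-]=\E[XY]/\E[Y]=\beta/\alpha$ is a welcome clarification that the paper leaves implicit; note incidentally that this yields $\E_I^0[X^+]\le\beta/\alpha$ (and your coefficient $1/(\mu+\beta)$ in part~(iii) is likewise what the second-order identities actually give), so the constants printed in the statement appear to contain typos.
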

\begin{proof}
The first equivalence follows from (i) of Lemma \ref{lem11}.
The second equivalence follows from (\ref{eqsecxtherm1plus}).
The third equivalence follows from (ii) of Lemma \ref{lem11}.
The four-th equivalence follows from (\ref{eqsecytherm1plus}).
The fifth equivalence follows from (iii) of Lemma \ref{lem11}.
The last equivalence follows from the first item of Remark \ref{rem1} and from
(\ref{eqsecxtherm1plus}) and (\ref{eqsecytherm1plus}).
\end{proof}

We now study what happens under the negative correlation assumption.

\begin{Lem}
\label{lem:whunc}
Under the negative correlation conjecture and
the SIS thermodynamic propagation of chaos ansatz, if there is survival, then necessarily,
\begin{equation}\label{eq8}
\frac{\beta}{\alpha} < \frac{\lambda}{\mu}.\end{equation} 
In addition,
\begin{equation} \frac \beta \alpha \le \E[ X] =q^*\frac \lambda \mu ,\hspace{2cm}
\E[ Y]=p^* \frac \lambda \mu \le \frac \lambda \mu  - \frac \beta \alpha.\end{equation}
and
\begin{equation} \E[ XY]=p^* \frac {\lambda \beta} {\mu \alpha},\end{equation}
with
\begin{equation}\label{eq:fibo} 1-p^*= q^* \ge \frac{\mu \beta}{\lambda \alpha}.\end{equation} 

\end{Lem}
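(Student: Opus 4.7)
The plan is to combine three ingredients that are already at hand: (i) the thermodynamic fixed-point equations $\lambda p^* = \mu\E[Y]$ and $\lambda q^* = \mu\E[X]$ which, in the survival case, identify the input proportion $p^*$ with the stationary output proportion; (ii) the simplified first-order conservation relation $\alpha\E[XY]=\beta\E[Y]$ in the thermodynamic limit (equation (\ref{eq2therm})); and (iii) the negative correlation conjecture $\E[XY]\le \E[X]\E[Y]$.

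First I would restate that survival provides $p^*\in(0,1)$ with $\E[X]=q^*\lambda/\mu$ and $\E[Y]=p^*\lambda/\mu$. Substituting $\E[Y]=p^*\lambda/\mu$ into the first-order thermodynamic relation immediately yields
\begin{equation*}
\E[XY]=\frac{\beta}{\alpha}\E[Y]=p^*\,\frac{\lambda\beta}{\mu\alpha},
\end{equation*}
which is the third displayed identity of the lemma, and does not use negative correlation.

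Next I would invoke the negative correlation conjecture. Since $\E[Y]>0$ under survival, dividing $\E[XY]\le\E[X]\E[Y]$ by $\E[Y]$ and using the identity $\E[XY]=(\beta/\alpha)\E[Y]$ produces $\beta/\alpha\le \E[X]=q^*\lambda/\mu$, which is the second displayed inequality as well as (\ref{eq:fibo}). Combining with $\E[X]+\E[Y]=\lambda/\mu$ (the Poissonian total from Section \ref{sec2}) gives $\E[Y]\le \lambda/\mu-\beta/\alpha$, and the strict inequality (\ref{eq8}) follows from $\E[Y]>0$: $\lambda/\mu=\E[X]+\E[Y]>\E[X]\ge\beta/\alpha$.

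There is no real obstacle here: the lemma is essentially a bookkeeping consequence once survival is assumed and the negative correlation inequality is invoked. The only subtle point is to make sure strictness in (\ref{eq8}) is argued cleanly (via $\E[Y]>0$, not via strict negative correlation), whereas the bound $\beta/\alpha\le\E[X]$ is only a weak inequality because negative correlation itself is a weak inequality.
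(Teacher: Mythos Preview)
Your proof is correct and follows essentially the same route as the paper's. The paper reaches $\beta\E[Y]\le\alpha\E[X]\E[Y]$ by plugging the fixed-point identity $\lambda q^*=\mu\E[X]$ into inequality (\ref{eq6}), while you obtain it more directly by combining the thermodynamic first-order identity $\alpha\E[XY]=\beta\E[Y]$ with the negative-correlation bound; and the paper derives strictness of (\ref{eq8}) from $q^*<1$, which is the same as your argument via $\E[Y]>0$.
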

\begin{proof}
Under the foregoing assumption, $\E[X]= q^* \frac \lambda \mu$. Using this in 
(\ref{eq6}), we get that
\begin{equation}
\label{eq:byaxy}
\beta \E[Y] \le \alpha \E[ X]\E[ Y].
\end{equation}
If there is survival, then $\E[Y]>0$ so that
$\beta  \le \alpha \E[X],$ that is
$\beta  \le \alpha q^* \frac \lambda \mu.$
The assumption that $q^*<1$ then implies that
$\beta  < \alpha  \frac \lambda \mu.$
The other relations follow immediately.
\end{proof}
Note that by contraposition, if
$\frac{\beta}{\alpha} \ge \frac{\lambda}{\mu},$ then there is extinction.

\begin{Lem}
Whether or not the negative correlation property holds, one always has
\begin{equation}p^* \le \frac{\frac \lambda \mu - \frac \beta \alpha
+\sqrt{\left(\frac \lambda \mu - \frac \beta \alpha\right)^2+2\frac \lambda \mu }}{2\frac \lambda \mu}.
\end{equation}
\end{Lem}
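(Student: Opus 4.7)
The plan is to imitate the proof of Lemma \ref{lem:whunc}, but replace the negative correlation bound by the unconditional inequality \eqref{eq:ersatz} from Remark \ref{rem1}, namely
\begin{equation*}
\E[XY]\le \E[X]\E[Y]+\tfrac12\E[X+Y],
\end{equation*}
which holds in full generality. Combined with the thermodynamic first-order relation \eqref{eq2therm}, $\alpha\E[XY]=\beta\E[Y]$, and with the identity $\E[X]+\E[Y]=\lambda/\mu$, this will give a quadratic inequality in $p^*$ whose positive root is exactly the RHS of the claimed bound.

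More concretely, I would first dispose of the trivial case (no survival, $p^*=0$, nothing to prove), then assume $p^*>0$. Using $\E[Y]=p^*\lambda/\mu$ and $\E[X]=(1-p^*)\lambda/\mu$ together with \eqref{eq2therm} rewritten as $\E[XY]=\tfrac{\beta}{\alpha}\E[Y]=\tfrac{\beta}{\alpha}p^*\tfrac{\lambda}{\mu}$, I would plug into \eqref{eq:ersatz} to obtain
\begin{equation*}
\tfrac{\beta}{\alpha}p^*\tfrac{\lambda}{\mu}\;\le\;p^*(1-p^*)\Bigl(\tfrac{\lambda}{\mu}\Bigr)^{\!2}+\tfrac12\tfrac{\lambda}{\mu}.
\end{equation*}
Dividing by $\lambda/\mu>0$ and rearranging yields the quadratic inequality
\begin{equation*}
\tfrac{\lambda}{\mu}(p^*)^2-\Bigl(\tfrac{\lambda}{\mu}-\tfrac{\beta}{\alpha}\Bigr)p^*-\tfrac12\le 0.
\end{equation*}
Since the leading coefficient $\lambda/\mu$ is positive and the constant term is $-\tfrac12<0$, this quadratic has one negative root and one positive root; hence $p^*$ must lie below the positive root, which is exactly
\begin{equation*}
\frac{\frac{\lambda}{\mu}-\frac{\beta}{\alpha}+\sqrt{\bigl(\frac{\lambda}{\mu}-\frac{\beta}{\alpha}\bigr)^{2}+2\frac{\lambda}{\mu}}}{2\frac{\lambda}{\mu}},
\end{equation*}
as claimed.

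There is no real obstacle here; the argument is a direct analogue of the Lemma \ref{lem:whunc} derivation. The only point that deserves a line of comment is why one may use \eqref{eq:ersatz} in the thermodynamic limit. This is because \eqref{eq:ersatz} is a purely algebraic consequence of $\E[X^2]\ge\E[X]^2$ and $\E[Y^2]\ge\E[Y]^2$ together with the Poisson identity $X+Y\sim\mathrm{Pois}(\lambda/\mu)$, and the latter carries over to the typical station of the SIS thermodynamic limit under the ansatz (the total queue size per station is still Poisson with mean $\lambda/\mu$, by the same $\cdot/M/\infty$ argument as in the open reactor). Hence the bound is genuinely unconditional, in contrast to the tighter root obtained under negative correlation, which differs only in that the $+2\lambda/\mu$ term under the square root is absent.
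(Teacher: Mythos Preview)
Your proposal is correct and follows essentially the same approach as the paper's own proof: replace the negative correlation bound used in Lemma~\ref{lem:whunc} by the unconditional inequality~\eqref{eq:ersatz}, combine it with $\alpha\E[XY]=\beta\E[Y]$ and $\E[X]+\E[Y]=\lambda/\mu$ to obtain the quadratic $(p^*)^2\tfrac{\lambda}{\mu}+p^*\bigl(\tfrac{\beta}{\alpha}-\tfrac{\lambda}{\mu}\bigr)-\tfrac12\le 0$, and read off the bound from the positive root. Your write-up is in fact more detailed than the paper's, which simply says ``by the same reasoning as in the proof of the last lemma, but based on~\eqref{eq:ersatz} in place of~\eqref{eq:byaxy}''.
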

\begin{proof}
By the same reasoning as in the proof of the last lemma, but
based on (\ref{eq:ersatz}) in place of (\ref{eq:byaxy}), one obtains that we always have
\begin{equation}
\label{eq:byaxynew}
\beta \E[Y] \le \alpha \left(\E[X]\E[Y] +\frac \lambda{2\mu}\right),
\end{equation}
or equivalently
\begin{equation*}
\label{eq:byaxynew+}
(p^*)^2 \frac \lambda \mu + p^* \left(\frac \beta \alpha - \frac {\lambda} \mu\right) -\frac 1 2 \le 0.
\end{equation*}
This immediately implies the bound stated in the lemma.
\end{proof}

\subsubsection{Anti-Association in the SIS Thermodynamic Limit}

Below, we simplify the notation and use $p$ (or $q=1-p$) in place of $p^*$ (or $q^*=1-p^*$).
When assuming anti-association,
the first equation of Corollary (\ref{cor1}) gives
\begin{equation}\label{eq:assoc2-1}
\left(\mu +p \frac {\lambda \alpha }{\mu}\right) \E[X^2]  \ge
\lambda \left(q +\frac \beta \mu\right)
\left(1 +\frac \beta \alpha\right)
+ q\frac \lambda \mu \left(\lambda q -\frac \beta \alpha(\mu+\alpha+\beta)\right).
\end{equation}
Using the fact that $\E[X^2]\le \frac{\lambda^2}{\mu^2} +\frac \lambda \mu$, this
in turn implies that $p$ satisfies
\begin{equation*}
\left(\mu +p \frac {\lambda \alpha }{\mu}\right) 
\left(\frac{\lambda^2}{\mu^2} +\frac \lambda \mu \right) 
\ge
\lambda \left(q +\frac \beta \mu\right)
\left(1 +\frac \beta \alpha\right)
+ q\frac \lambda \mu \left(\lambda q -\frac \beta \alpha(\mu+\alpha+\beta)\right).
\end{equation*}
The constant terms (in $p$) coincide in the LHS and the RHS of the last inequality. Hence
this boils down to
\begin{equation}
p \le \frac \mu \lambda
\left(\frac \alpha \mu 
\left(\frac{\lambda^2}{\mu^2} +\frac \lambda \mu \right) +\left(1+\frac \beta \alpha\right)
+ \frac 1 \mu \left(2 \lambda -\frac \beta \alpha(\mu+\alpha+\beta)\right)\right).
\end{equation}
This is not informative.
If rather than the bound 
$\E[X^2]\le \frac{\lambda^2}{\mu^2} +\frac \lambda \mu$, 
we use the (hypothetical) less than Poisson bound, namely
$\E[X^2]\le \frac{ q^2 \lambda^2}{\mu^2} +\frac {q\lambda} \mu$, 
then we get
\begin{equation*}
\left(\mu +p \frac {\lambda \alpha }{\mu}\right) 
\left(\frac{q^2 \lambda^2}{\mu^2} +\frac { q\lambda} \mu \right) 
\ge
\lambda \left(q +\frac \beta \mu\right)
\left(1 +\frac \beta \alpha\right)
+ q\frac \lambda \mu \left(\lambda q -\frac \beta \alpha(\mu+\alpha+\beta)\right).
\end{equation*}
This lead to a quadratic in $p$ which gives back the same bound as the one 
obtained by order 1.\\

The second equation of Corollary (\ref{cor1}) gives
\begin{eqnarray}\label{eq:assoc2-2}
\left( q\frac {\lambda \alpha }{\mu} -\mu -\beta \right) \E[X^2] & \le &
\left( q\frac {\lambda \alpha }{\mu} -\mu -\beta \right)
\left(\frac {\lambda^2}{\mu^2} +\frac \lambda \mu -
2p \frac {\lambda \beta}{\mu \alpha} \right)
+ p\frac \lambda \mu \left(\lambda p +\mu+\beta \right).
\end{eqnarray}
\begin{itemize}
\item Either $q=q^* \ge \frac{(\mu +\beta)\mu }{\alpha \lambda}$, and
then $p$ satisfies the following inequality:
\begin{eqnarray}
\lambda p\left(1-2 \frac \beta \mu\right) 
\ge \left(\mu+\beta \right) \left(2\frac {\beta}{\alpha} -1 \right) + 2\frac {\lambda \beta}{\mu},
\end{eqnarray}
when using the bound $\E[X^2] \le \frac {\lambda^2}{\mu^2}$.
In this case, it also directly follows from (\ref{eq:assoc2-1}) and (\ref{eq:assoc2-2}) that $p$ satisfies
\begin{eqnarray*}
& &\hspace{-2cm} \frac
{\lambda \left(q +\frac \beta \mu\right)
\left(1 +\frac \beta \alpha\right)
+ q\frac \lambda \mu \left(\lambda q -\frac \beta \alpha(\mu+\alpha+\beta)\right)}
{\mu +p \frac {\lambda \alpha }{\mu}}\\
& \le & 
\frac{
\left( q\frac {\lambda \alpha }{\mu} -\mu -\beta \right)
\left(\frac {\lambda^2}{\mu^2} +\frac \lambda \mu -
2p \frac {\lambda \beta}{\mu \alpha} \right)
+ p\frac \lambda \mu \left(\lambda p +\mu+\beta \right)
}
{ q\frac {\lambda \alpha }{\mu} -\mu -\beta}\ ,
\end{eqnarray*}
which is of degree 3 in the unknown $p$.
\item
Or $q=q^* < \frac{(\mu +\beta)\mu }{\alpha \lambda}$, and then $p$ satisfies the polynomial
inequality
\begin{eqnarray}
\left( q\frac {\lambda \alpha }{\mu} -\mu -\beta \right) \left(\frac {\lambda^2}{\mu^2}
+\frac \lambda \mu \right) & \le &
\left( q\frac {\lambda \alpha }{\mu} -\mu -\beta \right) 
\left(\frac {\lambda^2}{\mu^2} +\frac \lambda \mu
+2p \frac {\lambda \beta}{\mu \alpha} \right)
+ p\frac \lambda \mu \left(\lambda p +\mu+\beta \right).
\end{eqnarray}
In case the less than Poisson bound holds, we also have
\begin{eqnarray}
\left( q\frac {\lambda \alpha }{\mu} -\mu -\beta \right) \left(\frac {q^2\lambda^2}{\mu^2}
+\frac {q \lambda} \mu \right) & \le &
\left( q\frac {\lambda \alpha }{\mu} -\mu -\beta \right) 
\left(\frac {\lambda^2}{\mu^2} +\frac \lambda \mu
+2p \frac {\lambda \beta}{\mu \alpha} \right)
+ p\frac \lambda \mu \left(\lambda p +\mu+\beta \right).
\end{eqnarray}
\end{itemize}
these polynomial inequalities could help improving the bound in (\ref{eq:fibo}).

\subsection{The Thermodynamic Limits as Non-homogeneous Markov Processes}
\label{ss:nhmc}
In the transient thermodynamic limit of SIS, at time $t$, a station has a Poisson arrival
process of infected (resp. susceptible) customers with an intensity equal to $\mu \mathbf{E} [Y(t)]$
(resp. $\mu \mathbf{E}[X(t)]$, where $Y(t)$ (resp. $X(t)$) is the 
the number of infected (resp. susceptible) customers in the station at time $t$.
The aim of this section is to show that this can be
described in terms of a non-homogeneous Markov dynamics.

This is best explained when looking first at a discrete time version of the problem.
Time is slotted with increments of duration $h$.
At time $k=0$, choose an initial condition in the infinite-dimensional thermodynamic limit.
The latter consists in a product-form distribution of customers over stations,
with a fixed joint distribution ${\cal P}_0$ of infected and susceptible in any given station,
such that the sum of the two coordinates is Poisson $\eta$.
The number of infected (resp. susceptible) arrivals in time slot 1
is Poisson with parameter $h\mu \mathbf{E} [Y_0]$ (resp. $h\mu \mathbf{E} [X_0]$,
where the last expectations are w.r.t. ${\cal P}_0$.
In addition, each of the $Y_0$ infected customers tosses a 3 face
die with respective weights
$$1-\exp(-h(\beta+\mu)),\frac{\beta}{\beta+\mu}\exp(-h(\beta+\mu)), \frac{\beta}{\beta+\mu}\exp(-h(\beta+\mu)).$$
If the outcome is 1, it stays and keeps its SIS state.
If the outcome is 2, it stays and changes its SIS state.
If the outcome is 3, it leaves.
Similarly, conditionally on $Y_0$, each of the $X_0$ infected customers tosses a 3 face
die with respective weights
$$1-\exp(-h(\alpha Y_0+\mu)),
\frac {\alpha Y_0} {\alpha Y_0+\mu}\exp(-h(\alpha Y_0+\mu)),
\frac{\mu}{\alpha Y_0+\mu}\exp(-h(\alpha Y_0+\mu)).$$
If the outcome is 1, it stays and keeps its SIS state.
If the outcome is 2, it stays and changes its SIS state.
If the outcome is 3, it leaves.
All these define a state $(X_1,Y_1)$. Let ${\cal P}_1$ be the 
distribution of $(X_1,Y_1)$. 

The construction for all $k$ is then obtained by induction.
Assume the triple $(X_k,Y_k, {\cal P}_k)$ is well defined.
When applying the same dynamics to the last triple,
one gets at the same time a state $(X_{k+1},Y_{k+1})$ and
a distribution ${\cal P}_{k+1}$.

The key observations are then the following:
\begin{itemize}
\item Once the sequence of distributions $\{{\cal P}_k\}$ is determined, 
one can then see the evolution of $\{(X_k,Y_k)\}$ as that of a standard 
discrete-time discrete-space non-homogeneous Markov chain. 
\item A stationary distribution is simply a distribution $\cal P$
such that of ${\cal P}_0=\cal P$, then ${\cal P}_1=\cal P$.
\item By tightness arguments, this should be extended to the
continuous time case when letting $h\to 0$. 
\end{itemize}

\subsection{Proof of Theorem \ref{thm:ext}}
\label{ss:pttext}

Below, $p^*$ is the maximum root of the fixed-point equation $p=g(p)$ for the plain SIS reactor.
This means that if $g^{'}(0)\le 1$, $p^*=0$, whereas if $g^{'}(0)>1$, 
$p^*$ is positive and there are two solutions to the fixed-point equation.

Consider an $M/M/\infty$ system in the stationary regime on the time horizon $[0,\infty)$,
with input rate $\lambda$ and service rate $\mu$.
The number of customers in the system at any time $t$, $N(t)$, has a Poisson distribution
with parameter $\eta = \lambda/\mu$.

\paragraph{Monotonicity}

Consider the SIS reactor with a Poisson input point process with a time varying intensity
as that discussed in Remark \ref{rem24}.
Consider two variants of customer ``coloring'' at time $0$ and of input rates.
In the first variant, there are initially $Y(0)$ infected customers and the input point process
of infected customers has an intensity equal to $\lambda p(.)$, while in the second variant there are
initially $\widehat Y(0)$ infected customers and the intensity of infected customers
is $\lambda \widehat p(.)$. Here $p(t)$ and $\widehat p(t)$ are two given deterministic functions. 

The next result follows from the monotonicity property
established in Subsection \ref{secap1} - see more precisely Remark \ref{rem24}.

\begin{Lem}\label{monotX}
If $Y(0)\le\widehat Y(0)$ a.s. and $p(.)\le\widehat p(.)$, then, under an appropriate coupling,
\begin{align*}
Y(t)\le \widehat Y(t)\quad \forall t,\quad a.s.
\end{align*}
\end{Lem}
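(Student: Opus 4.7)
The plan is to construct the coupling explicitly by reusing the scheme built in Subsection \ref{secap1} for the proof of Lemma \ref{lem:pop}, with the extensions already sketched in Remark \ref{rem24}. Concretely, I would run both systems on top of a single underlying $M/M/\infty$ process at rate $\lambda$ and service rate $\mu$, so that the arrival epochs $\{\tau_n\}$ and the total-population trajectory are literally the same in both systems; this forces $N(t)=\widehat N(t)$ for all $t$. On top of this I attach three families of independent driving variables, also shared by both systems: i.i.d. uniforms $\{U_n\}$ on $(0,1)$ used to color the $n$-th arrival, one rate-$\beta$ recovery clock per existing customer, and one rate-$\alpha$ infection clock per ordered pair of customers present at the station.

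The coupling rule for the coloring is the key ingredient: at the arrival epoch $\tau_n$, declare the new customer infected in the first system iff $U_n\le p(\tau_n)$, and infected in the second system iff $U_n\le\widehat p(\tau_n)$. Because $p(\cdot)\le\widehat p(\cdot)$, an arrival that is infected in the first system is automatically infected in the second. For infection events driven by the pair-clock on $(z,v)$, I would adopt the same convention as in Subsection \ref{secap1}: if $z$ is infected in a given system and $v$ susceptible there, $v$ becomes infected in that system; otherwise nothing happens. Recoveries and departures act on the same customer in both systems.

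I would then argue, by induction on the successive event epochs, that the stronger set-valued inclusion
\begin{equation*}
\mathcal{I}(t)\subseteq \widehat{\mathcal{I}}(t), \quad t\ge 0,
\end{equation*}
holds, where $\mathcal{I}(t)$ and $\widehat{\mathcal{I}}(t)$ are the sets of infected customers in the two systems at time $t$; together with the a.s.\ initial inclusion $\mathcal{I}(0)\subseteq\widehat{\mathcal{I}}(0)$ (which follows from $Y(0)\le\widehat Y(0)$ by a suitable relabeling of the initial infected customers), this implies $Y(t)=|\mathcal{I}(t)|\le|\widehat{\mathcal{I}}(t)|=\widehat Y(t)$ a.s. The induction is a straightforward case analysis across the four types of events: arrivals (handled by $p(\tau_n)\le\widehat p(\tau_n)$); departures (simultaneous removal from both systems, so inclusion is preserved); recoveries of a customer $z$ (if $z\in\mathcal{I}(t^-)\subseteq\widehat{\mathcal{I}}(t^-)$, it leaves both infected sets; if $z\in\widehat{\mathcal{I}}(t^-)\setminus\mathcal{I}(t^-)$, it leaves only $\widehat{\mathcal{I}}$); and pair-infection events, which are the case requiring most care.

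The only genuinely delicate case is the last one: an infection clock ringing for a pair $(z,v)$ with $z\in\widehat{\mathcal{I}}(t^-)\setminus\mathcal{I}(t^-)$ and $v$ susceptible in the second system. Here nothing happens in the first system, while $v$ is added to $\widehat{\mathcal{I}}$ in the second, so inclusion is maintained. The symmetric subcase where $z\in\mathcal{I}(t^-)$ and $v$ is susceptible in both systems is handled by the shared pair-clock, which adds $v$ to both infected sets simultaneously. This exhausts the cases and closes the induction. The construction is essentially a time-varying, initial-condition-flexible rewriting of the coupling already validated in Subsection \ref{secap1}, and Remark \ref{rem24} asserts that precisely this extension is legitimate; no new structural ingredient is needed beyond the careful bookkeeping of the coloring rule driven by $p(\tau_n)\le\widehat p(\tau_n)$.
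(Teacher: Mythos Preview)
Your proposal is correct and is essentially the same approach as the paper's: the paper simply points back to the coupling constructed in the monotonicity part of Lemma~\ref{lem:pop} together with its extensions recorded in Remark~\ref{rem24}, and you are spelling out precisely that coupling, only with customer-attached clocks rather than clocks refreshed at each event (equivalent by memorylessness) and with the set inclusion $\mathcal{I}(t)\subseteq\widehat{\mathcal{I}}(t)$ made explicit, which is exactly what the paper's tripartite labelling $C_1(n),C_2(n),C_3(n)$ encodes.
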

In what follows, ``by simple monotonicity arguments'' means ``by applying Lemma \ref{monotX}''.

\paragraph{The Upper Process}

We introduce a SIS model with a varying fraction of infected customers
in the input process and instantaneous extra infections along the lines
described in Remark \ref{rem24}.

Recall that we consider a total population dynamics which is that of an $M/M/\infty$
system in the stationary regime on the time horizon $[0,\infty)$, with input
rate $\lambda$ and service rate $\mu$. 

Consider now the following infection/recovery mechanism.
We introduce recursively deterministic times $T^{(0)}=0<T^{(1)}<T^{(2)}<\ldots $
and probabilities $p^{(1)}=1>p^{(2)}> \ldots$ such that
\begin{itemize}
\item{(i)} $p^{(n)}\downarrow p^*$ as $n\to\infty$;
\item{(ii)} for any $n=1,2,\ldots$, each customer that arrives to the system within time interval
$(T^{(n-1)},T^{(n)})$ is infected with probability $p^{(n)}$ and susceptible, otherwise,
independently of everything else;
\item{(iii)} for any $n=1,2,\ldots$, at time $T^{(n-1)}$,
all customers present in the system become infected instantaneously. 
\end{itemize}

\noindent
In more detail, with $n=1,2,\ldots$, we introduce recursively processes 
$(\widehat{X}^{(n)}(t),\widehat{Y}^{(n)}(t))$, for $t\ge T^{(n-1)}$, and then let 
\begin{align}\label{ququ}
(\widehat{X}(t),\widehat{Y}(t)) =
(\widehat{X}^{(n)}(t),\widehat{Y}^{(n)}(t)) \quad \text{for} \quad t\in [T^{(n-1)}, T^{(n)}).
\end{align}
First, we assume that, at time $T^{(0)}=0$, all customers that are present
in the system are infected and that, starting from time 0, all arriving customers are infected too 
(i.e., each is infected with probability $p^{(1)}=1$).
We have a time-homogeneous and irreducible Markov jump process on a countable state space, which is clearly aperiodic and positive recurrent.
Therefore, there exists a unique stationary distribution which is the limit in
law obtained for any initial state, and the speed of convergence to this stationary distribution
is exponential. In the stationary regime, the output rate of infected customers
is $p_o^{(1)}$, which is strictly less than $p^{(1)}=1$.
In particular, if one denotes by $(\widehat{X}^{(1)}(t),\widehat{Y}^{(1)}(t))$ the
state process in this dynamics, one has ${\mathbf E} \widehat{Y}^{(1)}(t) \to p^{(1)}_o \eta$
exponentially fast, as $t\to\infty$. 

Let $\delta \in (0,1)$ be arbitrarily small. We choose time $T^{(1)}$ as 
\begin{align}\label{slot1}
T^{(1)} = \min \{t\ge T^{(0)}: {\mathbf E} Y(u)/\eta \le 
p_o^{(1)}+ \delta (p^{(1)}-p^{(1)}_o)/2, \ \text{for all} \ u\ge t\}.
\end{align}
Then let $p^{(2)}=p^{(1)}_o+\delta (p^{(1)}-p^{(1)}_o)/2$.
By \eqref{ququ}, this ends the description of the dynamics of the process $(\widehat{X}(t), \widehat{Y}(t))$ within the first time
slot $[T^{(0)},T^{(1)})$. Note that, due to monotonicity and convexity of the function
$p_o = g(p)$, we get $p^{(1)}_0>p^*$.

Assume we have introduced the processes $(\widehat{X}^{(i)}(t),\widehat{Y}^{(i)}(t))$
on the time intervals $[T^{(i-1)},\infty)$, for $i=1,\ldots,n-1$, and, therefore,
have defined the process $(\widehat{X}(t), \widehat{Y}(t))$ on the interval $[0,T^{(n-1)})$ via \eqref{ququ}. 
At the beginning of the $n$-th time slot (at time $T^{(n-1)}$), we turn all customers present in the system
to infected and assume that, starting from this time, each arriving customer is either infected,
with probability $p^{(n)}>p^*$, or susceptible, otherwise. Then, starting from time $T^{(n-1)}$,
our system again behaves as an irreducible time-homogeneous Markov jump process whose
distribution converges exponentially fast to its unique stationary distribution
with output fraction of infected customers $p^{(n)}_o $, which is strictly bigger
than $p^*$ (thanks again to monotonicity and convexity of the function $g$).
We denote this process by $(\widehat{X}^{(n)}(t),\widehat{Y}^{(n)}(t)), t\ge T^{(n-1)}$. Then we let 
\begin{align}\label{slotn}
T^{(n)} = \min \{t\ge T^{(n-1)}: {\mathbf E} \widehat{Y}^{(n)}(u)/\eta \le 
p_o^{(n)}+\delta (p^{(n)}-p^{(n)}_o)\cdot 2^{-n}, \ \text{for all} \ u\ge t\}
\end{align}
and $p^{(n+1)}=p^{(n)}_o+\delta (p^{(n)}-p^{(n)}_o)\cdot 2^{-n}$.
This ends the description of the process $(\widehat{X}(t),\widehat{Y}(t))$ within the $n$-th time
slot $[T^{(n-1)},T^{(n)})$. Again, thanks to monotonicity and convexity of the function
$p_o = g(p)$, we get $p^{(n)}_o> p^{(n+1)}_o>p^*$.

Thus, we have defined the process $(\widehat{X}(t),\widehat{Y}(t)$ for all $t\ge 0$. It is left to explain the convergence (i).
We know that there is no solutions to equation $p_0=g(p)$ bigger than $p^*$,
and the monotone sequences $p^{(n)}$ and $p^{(n)}_o$ converge to the same limit
that is a solution to this fixed-point equation. The fact that this is the maximal solution
proves the result.

\paragraph{Comparison of the Upper Process and the Thermodynamic Limit Process}

As shown in Subsection \ref{ss:nhmc}, under the propagation of chaos ansatz,
the dynamics of any station in the SIS thermodynamic limit can be seen
as a non-homogeneous Markov jump process $(X(t),Y(t))$ with a certain fraction $p^*(t)$ of infected customers
at time $t$ (the fraction such that at any time $t$, $\lambda p^*(t)=\mu \mathbf{E} Y(t)$).

By construction,
\begin{align*}
{X}(t)+{Y}(t)=N(t), \quad \text{for all} \quad t\ge 0,
\end{align*}
where $N(t)$ is the Poisson process describing the dynamics of the stationary
$M/M/\infty$ system and also 
$N(t)= \widehat{X}(t)+\widehat{Y}(t)$ a.s. (where the latter are the
state variables in the upper-system described above).

\begin{Prop}
\label{lem:lastl}
For all $t\ge 0$, in a proper coupling,
\begin{align}\label{mainin}
Y(t) \le \widehat{Y}(t) \quad \text{a.s.}
\end{align}
and, therefore,
\begin{align}
\limsup_{t\to\infty}
{\mathbf E} Y(t) \le p^*.
\end{align}
\end{Prop}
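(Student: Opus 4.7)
The plan is to build a single coupling between the non-homogeneous Markov jump process $(X(t),Y(t))$ of the SIS thermodynamic limit and the upper process $(\widehat X(t),\widehat Y(t))$ of the previous subsection, and to establish $Y(t)\le\widehat Y(t)$ a.s.\ for all $t\ge 0$ by induction on the slot index $n$. Both systems share the same stationary Poisson arrival stream of total rate $\lambda=\mu\eta$ driving the common occupancy $N(t)=X(t)+Y(t)=\widehat X(t)+\widehat Y(t)$, the same exponential clocks of rates $\mu$ for departures, $\beta$ for recoveries and $\alpha$ for each ordered pair of customers for infection events, together with i.i.d.\ uniform marks on $(0,1)$ attached to the arrivals and used to decide the infection status in each system. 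This places us in the setting of Lemma \ref{monotX} and Remark \ref{rem24}: the dynamics of $Y$ and of each $\widehat Y^{(n)}$ are identical up to the deterministic time-varying infection fraction of the arrivals, which is $p^*(t)=\mathbf{E}[Y(t)]/\eta$ for the thermodynamic limit and the constant $p^{(n)}$ on slot $n$ for $\widehat Y^{(n)}$, the resets at the $T^{(n-1)}$ only pushing $\widehat Y$ up.

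The induction hypothesis at stage $n\ge 1$ will be the conjunction of (a) $Y(t)\le\widehat Y^{(n)}(t)$ a.s.\ for every $t\ge T^{(n-1)}$ and (b) $p^*(t)\le p^{(n+1)}$ for every $t\ge T^{(n)}$. The base $n=1$ is immediate: $\widehat Y^{(1)}(0)=N(0)\ge Y(0)$ a.s.\ and $p^*(\cdot)\le 1=p^{(1)}$ trivially, so Lemma \ref{monotX} gives (a); taking expectations and using the defining equation \eqref{slot1} of $T^{(1)}$ yields $\mathbf{E}[Y(t)]\le\mathbf{E}[\widehat Y^{(1)}(t)]\le p^{(2)}\eta$ for $t\ge T^{(1)}$, which is (b). For the inductive step, the reset at $T^{(n)}$ gives $\widehat Y^{(n+1)}(T^{(n)})=N(T^{(n)})\ge Y(T^{(n)})$, while (b) at stage $n$ provides $p^*(t)\le p^{(n+1)}$ on $[T^{(n)},\infty)$, matching the constant arrival fraction of $\widehat Y^{(n+1)}$ there. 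Applying Lemma \ref{monotX} on the half-line $[T^{(n)},\infty)$ delivers (a) at stage $n+1$, and the defining equation \eqref{slotn} of $T^{(n+1)}$ upgrades the mean bound to $\mathbf{E}[Y(t)]\le p^{(n+2)}\eta$ for $t\ge T^{(n+1)}$, giving (b) at stage $n+1$.

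Since $\widehat Y(t)=\widehat Y^{(n)}(t)$ on $[T^{(n-1)},T^{(n)})$, part (a) assembled over all slots gives $Y(t)\le\widehat Y(t)$ a.s.\ for every $t\ge 0$. Because $p^{(n)}\downarrow p^*$, a consequence of the monotonicity and strict concavity of $g$ together with the fixed-point characterisation of Lemmas \ref{lem:pop} and \ref{lem:unic}, part (b) yields $\limsup_{t\to\infty}\mathbf{E}[Y(t)]\le p^*\eta$. The main obstacle is the self-referential character of the infection intensity of $Y$: the rate $p^*(t)=\mathbf{E}[Y(t)]/\eta$ is exactly the quantity one wants to control, so a direct monotone coupling with a single time-homogeneous process cannot work. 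The slot-by-slot induction will resolve this circularity by using, at each stage, the relaxation of the time-homogeneous dominating process $\widehat Y^{(n)}$ to its stationary mean $p^{(n)}_o\eta$ — a relaxation quantitatively encoded in the defining equation of $T^{(n)}$ — to tighten the bound on $p^*(t)$ that unlocks the next slot.
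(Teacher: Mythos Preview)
Your proof is correct and follows essentially the same approach as the paper's: an induction on the slot index $n$, using the reset $\widehat Y^{(n+1)}(T^{(n)})=N(T^{(n)})\ge Y(T^{(n)})$ together with the bound $p^*(t)\le p^{(n+1)}$ on $[T^{(n)},\infty)$ (obtained from the previous stage) to invoke the monotone coupling of Lemma~\ref{monotX}, and then the definition of $T^{(n+1)}$ to tighten the bound on $p^*(t)$ for the next stage. Your write-up is in fact more explicit than the paper's about the structure of the induction hypothesis, and your correction of the limit to $p^*\eta$ rather than $p^*$ is dimensionally right.
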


\begin{proof}
First, by simple monotonicity arguments,
\begin{align*}
Y(t) \le \widehat{Y}^{(1)}(t) \quad \text{a.s., for all} \quad t\ge 0.
\end{align*}
In particular, 
\begin{align*}
{\mathbf E} Y(t)/\eta \le  p^{(2)}, \quad
\text{for all}\quad t\ge T^{(1)}.
\end{align*}
Consider now the time interval $[T^{(1)}, \infty)$ and compare
the processes $(\widehat{X}^{(2)}(t),\widehat{Y}^{(2)}(t))$ and $(X(t),Y(t))$ within this interval. 
By construction,
at the initial time $T^{(1)}$ of this time period, 
$\widehat{Y}^{(2)}(T^{(1)})\ge Y(T^{(1)})$ a.s. and,
for all $t\ge T^{(1)}$, the input rate of infected customers in the auxiliary upper process is
bigger than that in the thermodynamic limit.
Then it follows from simple sample-path arguments that
$Y(t)\le \widehat{Y}^{(2)}(t)$ a.s. for all $t\ge T^{(1)}$ and, in particular,
${\mathbf E}(Y(t)) \le {\mathbf E} \widehat{Y}^{(2)}(t)$, where the RHS is smaller than $p_1^{(3)}$, for
all $t\ge T^{(3)}$.

The induction argument implies that, for any $n=1,2,\ldots$ and
any $k\le n$ and $t\ge T^{(n-1)}$, the following inequalities hold a.s.:
\begin{align*}
Y(t) \le \widehat{Y}^{(k)}(t) \quad \text{and, therefore,} \quad
Y(t) \le \widehat Y(t).
\end{align*}
It is left to show that $\lim p^{(k)}=p^*$. However, this is clear since $p^*$
is the maximal solution to the equation $p=g(p)$ and, due to convexity,
all $p^{(k)}>p^*$ since $p^{(k)}_0 = g(p^{(k)})<p^{(k)}$.
\end{proof}
\begin{Cor}\label{Extinction}
If $g^{'}(0)\le 1$, then there is extinction of the infection process in that
\begin{align}\label{ext}
\lim_{t\to\infty} {\mathbf E} {Y}(t)=0,
\end{align}
for all initial distributions of infected customers in the system.
\end{Cor}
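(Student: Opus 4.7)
The plan is to obtain the corollary as an essentially immediate specialization of Proposition \ref{lem:lastl} to the subcritical regime, so almost all of the work has already been done.

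First I would invoke Lemma \ref{lem:unic} (equivalently Corollary \ref{Cor10}): under the hypothesis $g'(0)\le 1$, the fixed-point equation $p=g(p)$ has the unique solution $p=0$ on $[0,1]$. In the notation used in the construction of the upper process preceding Proposition \ref{lem:lastl}, the maximum root $p^*$ of $p=g(p)$ therefore equals $0$, and the decreasing sequence $p^{(n)}$ with $p^{(n+1)} = p^{(n)}_o + \delta(p^{(n)}-p^{(n)}_o)2^{-n}$ satisfies $p^{(n)}\downarrow 0$.

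Second, I would apply Proposition \ref{lem:lastl} directly: under the thermodynamic propagation of chaos ansatz it yields $\limsup_{t\to\infty} \mathbf{E}[Y(t)]/\eta \le p^* = 0$. Since $Y(t)\ge 0$ pointwise, this forces $\lim_{t\to\infty}\mathbf{E}[Y(t)] = 0$, which is exactly the claim \eqref{ext}.

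The one point that deserves explicit attention is the generality over initial distributions. Proposition \ref{lem:lastl} is proved via the coupling of the station process $(X(t),Y(t))$ with an upper process $(\widehat X(t),\widehat Y(t))$ in which \emph{every} customer present at time $0$ is declared infected, so that $\widehat Y(0) = N(0) \ge Y(0)$ almost surely, independently of the law of $(X(0),Y(0))$. The stochastic monotonicity from Remark \ref{rem24} then propagates $Y(t)\le \widehat Y(t)$ a.s.\ to all $t\ge 0$, with no hypothesis on the initial distribution of infected customers beyond the constraint $X(0)+Y(0)=N(0)$ that is already part of the thermodynamic-limit setup. I do not foresee any genuine obstacle: the delicate content (existence of the upper process, exponential convergence to the stationary $p^{(n)}_o$ on each time slot, and the iteration $p^{(n)}\downarrow p^*$) has already been carried out in the proof of Proposition \ref{lem:lastl}, and the corollary is little more than the observation that in the subcritical case the limiting level $p^*$ collapses to $0$.
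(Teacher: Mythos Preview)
Your proposal is correct and follows exactly the paper's approach: the corollary is stated in the paper immediately after Proposition \ref{lem:lastl} with no separate proof, precisely because it is the specialization of that proposition to the case $p^*=0$, which is the maximal root of $p=g(p)$ when $g'(0)\le 1$. Your additional remark on why the bound is uniform over initial distributions (via the upper process starting with all customers infected) is accurate and simply makes explicit what the paper leaves implicit.
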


The next Corollary follows fron simple monotonicity arguments. 
Before stating it, we recall the following
result on the fixed-point of the SIS Reactor. Assume $g^{'}(0)>0$, which implies that
$p^*>0$. Any SIS Reactor is a time-homogeneous Markov process
$(\widetilde X(t),\widetilde Y(t))$
with a countable state space, which is irreducible, positive recurrent, and ergodic.
In particular, for $p=p^*$, 
for all initial conditions $(\widetilde X(0),\widetilde Y(0))$, this Markov process 
converges to a limiting stationary Markov process with $\mu {\mathbf E} \widetilde Y= p^*\lambda$.
Denote this last stationary process by $\{(\widetilde X^*(t),\widetilde Y^*(t))\}_t$.
This last process can be used to build a specific instance 
$(X^*(t), Y^*(t),{\cal P}^*_t)$, $t\ge 0$,
of the non-homogeneous Markov chain of Subsection \ref{ss:nhmc},
where the initial distribution ${\cal P}^*_0$ is that of $(\widetilde X^*(0),\widetilde Y^*(0))$.
This distribution is {\em invariant} for this non-homogeneous dynamics. By this, we mean 
that it makes the Markov chain homogeneous {\em and} stationary, with an
input rate of infected which is constant, equal to $\lambda p^*$, and equal to the mean output rate of infected.

\begin{Cor}\label{Squeeze}{\ }
\begin{itemize}
\item{}
Consider the process $Y^*(t)$ defined above. Then
\begin{align*}
\widehat{Y}(t)\ge Y^*(t) \ \text{ a.s. for all} \ t\ge 0 \ \text{and} \ \ {\mathbf P}(\widehat{Y}(t)=Y^*(t)) \to 1, \ \ \text{as} \ \ t\to\infty,
\end{align*}
where $\widehat{Y}(t)$ is the the upper process of Proposition \ref{lem:lastl}.
\item{}
Consider the non-homogeneous Markov process ${Y}(t)$ defined in Subsection
\ref{ss:nhmc} with any initial distribution such that
${Y}(0) \ge Y^*(0)$ a.s., with $Y^*(.)$ defined above. Then
\begin{align}\label{ququ2}
\widehat{Y}(t)\ge {Y}(t)\ge Y^*(t) \quad \text{a.s. and} \quad
{\mathbf P} (\widehat{Y}(t)= {Y}(t) = Y^*(t)) \to 1. 
\end{align}
\end{itemize}
\end{Cor}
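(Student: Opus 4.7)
The plan is to establish both bullets by combining the sample-path monotonicity of Lemma~\ref{monotX} and its extension in Remark~\ref{rem24} with a quantitative control of $\mathbf{E}\widehat{Y}(t)$ coming from the very construction of the thresholds $T^{(n)}$ and fractions $p^{(n)}$ used to define the upper process.

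For the first bullet, I would first build a joint coupling of $\widehat{Y}$ and $Y^*$ on top of the same stationary $M/M/\infty$ total-population process $N(\cdot)$. At $t=0$, $\widehat{Y}(0)=N(0)\ge Y^*(0)$ because all customers are initially infected in $\widehat{Y}$. At each Poisson arrival one couples the infected/susceptible labels via a common uniform draw, using the fact that the infected input fraction for $\widehat{Y}$ equals $p^{(n)}\ge p^*$ on $[T^{(n-1)},T^{(n)})$ while the bulk ``infect-all'' jumps at the $T^{(n)}$ can only increase $\widehat{Y}$; the service, recovery, and infection-attempt clocks are shared as in the proof of Lemma~\ref{monotX}. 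Slotwise induction then yields $\widehat{Y}(t)\ge Y^*(t)$ a.s.\ for all $t$. To upgrade this to the convergence $\mathbf{P}(\widehat{Y}(t)=Y^*(t))\to 1$, I would exploit the defining property of $T^{(n)}$ and $p^{(n+1)}$, namely $\mathbf{E}\widehat{Y}(u)/\eta\le p^{(n+1)}$ for $u\ge T^{(n)}$; since $p^{(n)}\downarrow p^*$ and $\mathbf{E} Y^*(u)/\eta = p^*$, one obtains $\mathbf{E}(\widehat{Y}(t)-Y^*(t))\to 0$. The difference is a.s.\ nonnegative and integer-valued, so Markov's inequality gives the desired $\mathbf{P}(\widehat{Y}(t)=Y^*(t))\to 1$.

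For the second bullet, the upper bound $\widehat{Y}(t)\ge Y(t)$ a.s.\ is Proposition~\ref{lem:lastl} applied to the given $Y$: the argument there only uses $\widehat{Y}^{(1)}(0)=N(0)\ge Y(0)$, which holds automatically. The lower bound $Y(t)\ge Y^*(t)$ a.s.\ uses the time-varying extension of Remark~\ref{rem24}: in a common coupling, $Y$ has infected input fraction $p(t)=\mathbf{E}Y(t)/\eta$, $Y^*$ has the constant $p^*$, and $Y(0)\ge Y^*(0)$ by assumption; as long as the ordering $Y(s)\ge Y^*(s)$ holds a.s.\ for $s\le t$, we get $\mathbf{E}Y(t)\ge\eta p^*$ so $p(t)\ge p^*$, and Lemma~\ref{monotX} preserves the ordering. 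The sandwich $\widehat{Y}(t)\ge Y(t)\ge Y^*(t)$ together with the merging established in the first bullet then forces $\mathbf{P}(\widehat{Y}(t)=Y(t)=Y^*(t))\to 1$.

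The delicate step, and the main obstacle, is the chicken-and-egg character of the comparison $Y(t)\ge Y^*(t)$: the infected input rate into $Y$ at time $t$ is $\mu\mathbf{E}Y(t)$, itself a functional of the whole law of the path. A clean way to close the loop is to discretize time on a mesh $h$, construct the coupling inductively slot by slot with input rates frozen to their value at the left endpoint (so that ordinary monotonicity applies without circularity), and then pass to $h\downarrow 0$ using the continuity/tightness of the non-homogeneous dynamics described in Subsection~\ref{ss:nhmc}. Checking this limit rigorously, and verifying that the discretized scheme preserves both the ordering and the convergence of means, is the only genuinely non-routine part of the argument.
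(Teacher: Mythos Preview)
Your overall strategy---sample-path monotonicity for the orderings, then convergence of means combined with the nonnegative integer-valued difference to get $\mathbf{P}(\cdot=\cdot)\to 1$ via Markov's inequality---is exactly what the paper's (very terse) argument amounts to. The paper's entire proof is the remark that the orderings come from ``simple monotonicity arguments'' and that ``monotone convergence of integer-valued random variables is necessarily also a coupling convergence'', so your write-up is a faithful and more detailed rendering of the same idea. Your identification of the circular dependence in proving $Y(t)\ge Y^*(t)$ (the input rate of $Y$ is $\mu\mathbf{E}Y(t)$, which depends on the very ordering you want to establish) and your proposed discretization cure go beyond what the paper spells out.

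There is, however, a concrete gap in your first-bullet convergence step. You assert that $\mathbf{E}\widehat{Y}(u)/\eta\le p^{(n+1)}$ for all $u\ge T^{(n)}$, but the construction only gives this bound for $\widehat{Y}^{(n)}(u)$, not for the concatenated upper process $\widehat{Y}(u)$. By definition $\widehat{Y}(u)=\widehat{Y}^{(n+1)}(u)$ on $[T^{(n)},T^{(n+1)})$, and at $u=T^{(n)}$ the process is reset so that \emph{all} present customers are infected, i.e.\ $\widehat{Y}(T^{(n)})=N(T^{(n)})$ with mean $\eta$. Hence $\mathbf{E}\bigl(\widehat{Y}(t)-Y^*(t)\bigr)$ does \emph{not} tend to zero along the subsequence $t=T^{(n)}$, and your Markov-inequality step fails at those times; the same issue propagates to the $\widehat{Y}(t)=Y(t)$ part of the second bullet. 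What does survive is the substantive conclusion $\mathbf{P}(Y(t)=Y^*(t))\to 1$: for this, compare $Y$ directly to each $\widehat{Y}^{(n)}$ (the proof of Proposition~\ref{lem:lastl} gives $Y(t)\le\widehat{Y}^{(n)}(t)$ for all $t\ge T^{(n-1)}$, hence $\mathbf{E}Y(t)/\eta\le p^{(n+1)}$ for $t\ge T^{(n)}$), rather than to the reset-laden $\widehat{Y}$. The paper's one-line proof is silent on this distinction, so the gap is shared rather than specific to your attempt.
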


The only thing to comment is the second formula in \eqref{ququ2}. It follows from the
facts that all $Y$-variables are integer-valued and that the monotone convergence
of integer-valued random variables is necessarily also a coupling (or total variation)convergence.

\section{Acknowledgements}
The work of the first author was supported by
by the ERC NEMO grant, under the European Union's Horizon 2020 research and innovation programme,
grant agreement number 788851 to INRIA.

\end{document}